\newtheorem{theorem}{Theorem}[section]
\newtheorem{lemma}{Lemma}[section]
\newtheorem{proposition}{Proposition}[section]
\newtheorem{remark}{Remark}[section]
\numberwithin{equation}{section}
\begin{document}
\title{On exact scaling log-Infinitely divisible cascades}

\author{Julien Barral}
\address{LAGA, Institut Galil\'ee Universit\'e Paris 13, Sorbonne Paris Cit\'e, 99 Av. Jean-Baptiste Cl\'ement 93430 Villetaneuse, France}
\email{barral@math.univ-paris13.fr}

\author{Xiong Jin}
\address{Mathematical Institute, University of St Andrews, North Haugh, St Andrews, Fife, KY16 9SS, Scotland}
\email{xj3@st-andrews.ac.uk}
\thanks{The second author is supported by a Royal Society Newton International Fellowship.}

\begin{abstract}
In this paper we extend some classical results valid for canonical multiplicative cascades to exact scaling log-infinitely divisible cascades. We complete previous results on non-degeneracy and moments of positive orders obtained by Barral and Mandelbrot, and Bacry and Muzy: we provide a necessary and sufficient condition for the non-degeneracy of the limit measures of these cascades, as well as for the finiteness of moments of positive orders of their total mass, extending Kahane's result for canonical cascades.  Our main results are analogues to the results by Kahane and Guivarc'h regarding the asymptotic behavior of the right tail of the total mass. They rely on a new observation made about the cones used to define the log-infinitely divisible cascades; this observation provides a ``non-independent'' random difference equation satisfied by the total mass of the measures. The non-independent structure brings new difficulties to study the random difference equation, which we overcome thanks to Goldie's implicit renewal theory. We also discuss the finiteness of moments of negative orders, and some geometric properties of the support. 
\end{abstract}
%\keywords{\ }

\maketitle
\section{Introduction}

 This paper studies fine properties of one of the fundamental models of positive random measures illustrating multiplicative chaos theory, namely limits of log-infinitely divisible cascades. 

Multiplicative chaos theory originates mainly from the intermittent turbulence modeling proposed by Mandelbrot in \cite{Ma72}, who introduced  a non completely rigorously mathematically founded construction of measure-valued log-Gaussian multiplicative processes. As its  mathematical treatment was hard to achieve, the model was simplified by Mandelbrot himself, who considered the so-called limit of  canonical  multiplicative cascades in \cite{Ma74c,Ma74a,Ma74b}. The study of  these statistically self-similar measures gave rise to a number of important contributions that we will describe in a while. In the eighties, Kahane founded multiplicative chaos theory in \cite{Ka85,Ka87a,Ka87b}, in particular for Gaussian multiplicative chaos (but also with applications to random coverings), providing the expected mathematical framework  for Mandelbrot's initial construction. Later, fundamental new illustrations of this theory by grid free statistically self-similar measures appeared, namely the compound Poisson cascades  introduced by Barral and Mandelbrot in \cite{BaMa02} and their generalization in the wide class of log-infinitely divisible cascades built by Bacry and Muzy in \cite{BaMu03}; in particular \cite{BaMu03} found a subclass of log-infinitely divisible cascades whose limits possess a remarkable exact scaling property: let $\mu$ be the measure on $\mathbb R_+$ obtained as  the non-degenerate limit  of such a cascade. There exists an integral scale $T>0$ and a L\'evy characteristic exponent $\psi$ such that  for all $\lambda\in (0,1)$, there exists an infinitely divisible random variable $\Omega_\lambda$, such that  $\mathbb{E}(e^{iq\Omega_\lambda})=\lambda^{-\psi(q)}$ for all $q\in\mathbb R$, and  
\begin{equation}\label{exsc}
(\mu([0,\lambda t]))_{0\le t\le T} \overset{\text{law}}{=}  \lambda e^{\Omega_\lambda} (\mu([0,t]))_{0\le t\le T},
\end{equation}
where on the right hand side $(\mu([0,t]))_{0\le t\le T}$ is independent of $e^{\Omega_\lambda} $. Moreover, $\big ((\mu([u,u+t])_{t\ge 0}\big)_{u\ge 0}$ is stationary, and the $\mu$-measure of any two intervals being away from each other by more than $T$ are independent.  

These measures were built on the real line, and higher dimensional versions have been built as well (see \cite{Ch06,RhVa10} for generalisations to the higher dimension). In particular, in dimension 2 and in the log-Gaussian case, they are closely related to the validity of  the so-called KPZ formula and its dual version in Liouville quantum gravity (see \cite{DuSh11} and references therein, as well as  \cite{BaJiRhVa12}). 

The same series of questions which have interested mathematicians for canonical cascades naturally occur for log-infinitely divisible cascades. This paper will deal with some of them, both by sharpening some known results and proving new ones, especially regarding the right tail asymptotic behavior of the law of the total mass of such a measure restricted to compact intervals. Our study will be based on, in an essential way, an alternative construction of the log-infinitely divisible cascades with exact scaling, consisting in making a new choice of ``cones'' used to build them. This new point of view also turns out to  have the advantage to make it possible to build multifractal processes over $\mathbb R_+$ combining stationarity and long range dependence of their  increments  along the multiples of an integral scale $T$, and exact scale invariance properties at scales smaller than $T$ over the intervals $[nT,(n+1)T]$; however we will lose the global stationarity of the increments, the stationarity being reduced to the semi-group $T\cdot \mathbb N$.  

\medskip

Let us come back to the canonical multiplicative cascades and the related fundamental questions. To build such a random measure in dimension 1, one considers  for instance the dyadic tree
\[
T=\bigcup_{j\ge 1}\Big \{M_{u}=\Big (2^{-(j+1)}+\sum_{k=1}^ju_k 2^{-k},2^{-j}\Big)\Big \}_{u\in\{0,1\}^j}
\]
embedded in the upper half-plane $\mathbb H$ (this extends naturally to $m$-adic trees). Then to each point $M_u$ one associates a random variable $W_u$, so that the $W_u$, $u\in\bigcup_{j\ge 1} \{0,1\}^j$, are independent and identically distributed with a positive random variable $W$ of expectation 1, and one defines a sequence of measures on $[0,1]$ as
\begin{eqnarray*}
\mu_j (\mathrm d t)&=&\prod_{k=1}^j W_{u_1\cdots u_k}\cdot { \mathrm dt}\quad\text{if }t\in \Big [ \sum_{k=1}^ju_k 2^{-k}, 2^{-j}+\sum_{k=1}^ju_k 2^{-k}\Big ),
\end{eqnarray*}
a definition which, to be interpreted in the same setting as that used to define the log-infinitely divisible cascades studied in this paper, can be reformulated in 
$$
\mu_j (\mathrm d t)=e^{\Lambda (C_{2^{-j}}(t))}{\, \mathrm dt},
$$
where $C_{2^{-j}}(t)=\{z=x+iy\in \mathbb{H}: -y/2\le x-t<y/2,\  2^{-j}\le y\le 1 \}$ and $\Lambda$ is the random measure on $(\mathbb H,\mathcal B(\mathbb H))$ defined as 
$$
\Lambda (A)=\sum_{u: M_u\in A}\log (W_u). 
$$
Indeed, the compound Poisson cascades mentioned above correspond formally to the replacement of the tree $T$ by the points of a Poisson point process in $\mathbb H$ with an intensity of the form $ay^{-2} \mathrm dx\mathrm dy$ ($a>0$),  the process being independent of the copies of $W$ attached to its points. 

The sequence $(\mu_j)_{j\ge 1}$ is a martingale which converges almost surely weakly to a measure $\mu$ supported on $[0,1]$.  Mandelbrot was especially interested in three related  questions: (1) under which necessary and sufficient conditions is $\mu$ non-degenerate, i.e. $\mathbb P(\mu\neq 0)=1$ ($\{\mu\neq0\}$ is a tail event of probability 0 or 1)? (2) When $\mu$ is non-degenerate, under which necessary and sufficient conditions $\mathbb E(\|\mu\|^q)<\infty$ when $q>1$? (3) When $\mu$ is non-degenerate, what is the Hausdorff dimension of $\mu$? He formulated and partially solved  conjectures about these questions. Then, the two first questions were solved by Kahane and the third one by Peyri\`ere in \cite{KaPe76}: let 
\begin{equation}\label{phi1}
\varphi(q)=\log_2\mathbb{E}(W^q)-(q-1).
\end{equation}
Then $\mu$ is non-degenerate if and only if $\varphi'(1^-)<0$; in this case the convergence of $\|\mu_j\|$ holds in $L^1$ norm, and  for $q>1$ one has $\mathbb E(\|\mu\|^q)<\infty$ if and only if $\varphi(q)<0$; also, the Hausdorff dimension of $\mu$ is $-\varphi'(1^-)$ (Peyri\`ere assumed $\mathbb E(\|\mu\|\log^+\|\mu\|)<\infty$, a condition removed in \cite{Ka87b}).

Answers to questions (1) and (2) exploited finely  the fundamental equation governing the canonical multiplicative cascade  and its limit (especially its exact scaling properties along the dyadic grid), namely the almost sure relation 
\begin{equation}\label{FEC}
Z=2^{-1}(W_0Z(0)+W_1Z(1)),
\end{equation}
where $Z=\|\mu\|$ and $Z(0)$ and $Z(1)$ are the independent copies of $Z$ obtained by making the substitution $W_u:=W_{0u}$ and $W_u:=W_{1u}$ respectively in the construction. Notice that in \eqref{FEC} we also have $(W_0,W_1)$ being independent of $(Z(0),Z(1))$. 

\medskip

Mandelbrot also raised the question of the asymptotic behavior of the right tail of $Z$. Kahane noticed that all the positive moments of $Z$ are finite if and only if $\mathbb P(W\le 2)=1$ and $\mathbb P(W=2)<1/2$ (recall that this is also equivalent to $\varphi(q)<0$ for all $q>1$), and in this case he showed in \cite{KaPe76} that
\begin{equation}\label{kaa}
\lim_{q\to\infty} \frac{\log \mathbb{E}(Z^q)}{q\log q}=\log_2 \mathrm{ess}\,\sup (W)\le 1.
\end{equation}
When there exists a (necessarily unique since $\varphi(1)=0$ and $\varphi$ is convex) solution $\zeta$ to the equation $\varphi(q)=0$ in $(1,\infty)$, Guivarc'h, motivated by a conjecture in \cite{Ma74a}, showed in \cite{Gu90} that when the distribution of $\log (W)$ is non-arithmetic, there exists a constant $0<d<\infty$ such that
\begin{equation}\label{gua}
\lim_{x\to \infty} x^{\zeta}\mathbb P(Z>x)=d.
\end{equation}
The proof is based on the connection of \eqref{FEC} with the theory of random difference equations.  

An almost necessary and sufficient condition for the finiteness of moments of negative orders of $Z$ have been obtained in \cite{Mol96,Liu01}. To derive a NSC, rather than $Z$ it is convenient to consider $\widehat Z=\widehat WZ$ where $\widehat W$ is a copy of $W$ independent of $Z$. Then combining \cite{BeSc09}, if $\mu$ is non-degenerate,  for $q>0$ one has $\mathbb E (\widehat Z^{-q})<\infty$ if and only if $\varphi(-q)<\infty$, i.e. $\mathbb E(W^{-q})<\infty$. 

\medskip

We will consider the previous problems for the limits of log-infinitely divisible cascades, whose formal definition will be given in Section~\ref{def3}, using a series of definitions given in Sections~\ref{def1} and~\ref{def2}. The new point of view we adopt on the construction of such measures with exact scaling properties yields equation \eqref{fek}, a natural and essential analogue to \eqref{FEC}, to which is associated an analogue to the logarithmic generating function $\varphi$. This equation does not  emerge immediately from Bacry and Muzy's point of view which, nevertheless, provides the scale invariance in law for the mass of intervals, a property which now follows directly from our approach. The question of non-degeneracy  was almost completely solved for compound Poisson cascades in \cite{BaMa02}; the same was done for the finiteness of moments of positive orders, a result extended to general infinitely divisible cascades in \cite{BaMu03}. Thanks to equation \eqref{fek}, we can prove rather easily for the limit $\mu$ of log-infinitely divisible cascades formally the same results as  the sharp result of Kahane on non-degeneracy (Theorem~\ref{nd}) and the finiteness of moments of positive orders for the total mass of the limit of canonical multiplicative cascades (Theorem~\ref{pm});  then, these results also hold for the more general family of  log-infinitely divisible cascades built in \cite{BaMu03},  since changing the shape of the cones used in the definition of the cascade only creates a random measure equivalent to that corresponding to the exact scaling, and the behaviors of such measures are comparable (see \cite[Appendix E]{BaMu03}).

Our main results concern the extension of Kahane's result on the asymptotic behavior of $\mathbb E(\|\mu\|^q)$ when all the moments of positive orders are finite (Theorem~\ref{eo}), and  the extension of Guivarc'h's result on the right tail behavior of the distribution of $\|\mu\|$ in case of moments explosion (Theorem~\ref{tail}); for these results we require the exact scaling property, so that \eqref{fek} holds.  The situation turns out to be  much more involved than that in the case of  canonical cascades, due to the correlations associated with \eqref{fek}, which are absent in \eqref{FEC}. We  first exploit the unexpected fact that in Goldie's approach in \cite{Gol91} to the right  tail behavior of solutions of random difference equations, it is possible to relax some independence assumptions. Then we must show that at the critical moment of explosion $\zeta$, although $\mathbb E(\mu([0,1])^\zeta)=\infty$, we have $\mathbb E(\mu([0,1/2])\mu([1/2,1])^{\zeta-1})<\infty$ under suitable (weak) assumptions, which  yields (in the non-arithmetic case)
$$
\lim_{x\to \infty} x^\zeta \mathbb{P}(\mu([0,1])>x)=\frac{2 \mathbb{E}\left(\mu([0,1])^{\zeta-1}\mu([0,1/2])-\mu([0,1/2])^{\zeta}\right)}{\zeta \varphi'(\zeta) \log 2} \in(0,\infty).
$$
The finiteness of $\mathbb E(\mu([0,1/2])\mu([1/2,1])^{\zeta-1})$, which is direct in the case of canonical cascades, is  rather involved here. 

For reader's convenience we will also extend to log-infinitely divisible cascades the result on finiteness of moments of negative orders mentioned in the previous paragraph (Theorem~\ref{nm}), though with some effort it may be deduced from \cite{BaMa02} and \cite{RhVa11}; they provide some information on the left tail behavior of the distribution of $\|\mu\|$.  Finally, thanks to \eqref{fek} we can quickly give  fine information on the geometry of the support of $\mu$ (Theorem~\ref{support}). 

\medskip

To complete these preliminary considerations, it is worth mentioning that the notes \cite{Ma74a,Ma74b} also  questioned the existence, when the limit $\mu$ is degenerate, of a natural normalization of  $\mu_j$ by a positive sequence $A_j$ such that $\mu_j/A_j$ converges, in some sense, to a non trivial limit. This problem was solved only very recently thanks to progress made in the study of freezing transition for logarithmically correlated random energy models \cite{We11} and in the study of branching random walks  in which a generalized version of \eqref{FEC} appears naturally \cite{AiSh11,Ma11}. Under weak assumptions, when $\varphi'(1^-)=0$, $\mu_j$ suitably normalized converges in probability to a positive random measure $\widetilde \mu$ whose total mass $Z$ still satisfies \eqref{FEC}, but is not integrable,  while when  $\varphi'(1^-)>0$, after normalization $\mu_j$ converges in law to the derivative of some stable L\'evy subordinator composed with the indefinite integral of an independent measure of $\widetilde \mu$ kind \cite{BaRhVa12}.  Previously, motivated by questions coming from interacting particle systems, Durrett and Liggett had achieved in \cite{DuLi83}  a deep  study of the positive solutions of the  equation  \eqref{FEC} assuming that the  equality holds in distribution only. Under weak assumptions, up to a positive  multiplicative constant, the general solution take either the form of the total mass of a non-degenerate measure $\mu$ or of $\widetilde\mu$, or it takes the form of  the increment between 0 and 1 of  some stable L\'evy subordinator composed with the indefinite integral of an independent measure of $\mu$ or $\widetilde \mu$ kind. Similar properties are conjectured to hold for log-infinitely divisible cascades, see (\cite{BaJiRhVa12} and \cite{DuRhShVa12}). 

Let us now come to the definitions (Sections~\ref{def1} and~\ref{def2}) required to build log-infinitely divisible cascades (Section~\ref{def3}), and our main results for the limits of such cascades (Section~\ref{results}).

\subsection{Independently scattered random measures}\label{def1}

Let $\psi$ be a characteristic L\'evy exponent given by
\begin{equation}\label{psi}
\psi: q \in\mathbb{R} \mapsto iaq -\frac{1}{2}\sigma^2q^2+  \int_{\mathbb{R}} \bigl(e^{iq x}-1-iq x \mathbf{1}_{|x|\le1}\bigr)\,  \nu(\mathrm{d}x),
\end{equation}
where $a,\sigma\in \mathbb{R}$ and $\nu$ is a L\'evy measure on $\mathbb{R}$ satisfying
\[
\nu(\{0\})=0 \text{ and } \int_\mathbb{R}  1 \wedge |x|^2 \, \nu(\mathrm dx) <\infty.
\]
Let $\mathbb{H}=\mathbb{R}\times i\mathbb{R}_+$ be the upper half plane and let $\lambda$ be a measure on $\mathbb{H}$ defined as
\[
\lambda(\mathrm dx\mathrm dy)= y^{-2} \mathrm dx \mathrm dy.
\]
Let $\Lambda$ be an homogenous independently scattered random measure on $\mathbb{H}$ with $\psi$ as L\'evy exponent and $\lambda$ as intensity (see \cite{RaRo89} for details). In particular, for every Borel set $B\in\mathcal{B}_\lambda=\{B\in\mathcal{B}(\mathbb{H}): \lambda(B)<\infty\}$ and $q\in\mathbb{R}$ we have
\[
\mathbb{E}\left(e^{i q \Lambda(B)}\right)=e^{\psi(q)\lambda(B)},
\]
and for every at most countable family of  disjoint Borel sets  $\{B_i\}\subset \mathcal{B}_\lambda$, the random variables $\{\Lambda(B_i)\}$ are independent and satisfy 
\begin{equation}\label{idm}
\Lambda\Big (\bigcup_{i}B_i\Big )=\sum_{i}\Lambda(B_i)\quad\text{almost surely}.
\end{equation}
Let $I_\nu$ be the interval of those $q\in\mathbb{R}$ such that
$\int_{|x|\ge1} e^{ q x}\, \nu(\mathrm{d}x)<\infty$. Then the
function $\psi$ has a natural extension to $\{z\in\mathbb{C}: -\mathrm{Im} (z) \in I_\nu\}$. In particular  for any $q \in I_\nu$ and
every $ B\in \mathcal{B}_\lambda$ we have
\[
\mathbb{E}\left(e^{q \Lambda(B)}\right)=e^{\psi(-iq)\lambda(B) }.
\]

Assume that at least one of $\sigma$ and $\nu$ is positive, and assume that $I_\nu$ contains the interval $[0,1]$. We adopt the normalization
\begin{equation}\label{a}
a=-\frac{\sigma^2}{2}-\int_\mathbb{R} \bigl(e^{x}-1- x \mathbf{1}_{|x|\le1}\bigr) \, \nu(\mathrm{d}x).
\end{equation}
Then for $B\in\mathcal{B}_\lambda$ we define
\[
Q(B)=e^{\Lambda(B)},
\]
and by \eqref{a} we have
\begin{equation}\label{Q}
\mathbb{E}(Q(B))=1.
\end{equation}
More generally for $q\in I_\nu$ we have
\begin{equation}\label{char}
\mathbb{E}(Q(B)^q)=e^{\psi(-iq)\lambda(B)}.
\end{equation}

\subsection{Cones and areas}\label{def2}

Let $\mathcal{I}=\{[s,t]: s,t\in \mathbb{R}, s<t\}$ be the collection of all nontrivial compact intervals. For $I=[s,t]\in\mathcal{I}$ denote by $|I|$ its length $t-s$.

For $t\in\mathbb{R}$ define the cone 
\[
V(t)=\{z=x+iy\in \mathbb{H}: -y/2< x-t\le y/2 \}=V(0)+t.
\]

For $I\in\mathcal{I}$ define
\[
V(I)=\bigcap_{t\in I} V(t).
\]

For $I\in\mathcal{I}$ and $t\in I$ define
\[
V^I(t)=V(t)\setminus V(I).
\]

For $I, J\in\mathcal{I}$ with $J\subseteq I$ define
\[
V^I(J)=\bigcap_{t\in J} V^I(t)=V(J)\setminus V(I).
\]

\begin{lemma}\label{area}
For $I,J\in \mathcal{I}$ with $J\subseteq I$ we have
\[
\lambda(V^I(J))=\log \frac{|I|}{|J|}.
\]
\end{lemma}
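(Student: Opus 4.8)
The plan is to compute $\lambda(V^I(J)) = \lambda(V(J)) - \lambda(V(I))$ by first finding a closed form for $\lambda(V(K))$ for an arbitrary $K = [s,t] \in \mathcal{I}$, since the intervals in question are nested ($V(I) \subseteq V(J)$ because $J \subseteq I$ forces $V(I) = \bigcap_{u \in I} V(u) \subseteq \bigcap_{u \in J} V(u) = V(J)$, so the set difference is genuine and its $\lambda$-measure is the difference of measures provided both are finite).

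First I would describe $V(K)$ explicitly. For a single point $u$, $V(u) = \{x+iy : -y/2 < x - u \le y/2\}$ is the cone with apex at the origin-shifted-to-$u$ on the real axis, opening upward with slope $\pm 1/2$ in the $(x,y)$-plane; equivalently $|x - u| \le y/2$ up to the boundary convention. Intersecting over $u \in [s,t]$: a point $x + iy$ lies in every $V(u)$ iff $|x-u| \le y/2$ for all $u \in [s,t]$, which (taking the worst cases $u=s$ and $u=t$) amounts to $x - s \le y/2$ and $u - x \le y/2$ for the extreme $u$'s, i.e. $\max(x-s, t-x) \le y/2$. The region $V(K)$ is thus nonempty only when $y \ge t - s = |K|$ (the apex of the intersection cone sits at height $|K|$ over the midpoint $(s+t)/2$), and for each such $y$ the horizontal slice is an interval in $x$ of length $y - (t-s) = y - |K|$. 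Hence
\[
\lambda(V(K)) = \int_{|K|}^{\infty} \frac{y - |K|}{y^2} \, \mathrm{d}y = \int_{|K|}^{\infty}\left(\frac{1}{y} - \frac{|K|}{y^2}\right)\mathrm{d}y.
\]
This integral diverges, so I must be slightly more careful: $\lambda(V(K)) = +\infty$ individually, and the cancellation only happens in the difference $V(J) \setminus V(I)$. So the cleaner route is to compute $\lambda(V^I(J))$ directly as an integral over $y$.

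The key step is therefore: for $J = [s',t'] \subseteq I = [s,t]$, at each height $y$ the slice of $V(J)$ is an interval of length $(y - |J|)^+$ and the slice of $V(I)$ is an interval of length $(y - |I|)^+$, and crucially the $V(I)$-slice is \emph{contained} in the $V(J)$-slice for $y \ge |I|$ (this containment at the level of slices needs a one-line check using $J \subseteq I$, comparing the centering and the widths; the $V(I)$ slice is centered at $(s+t)/2$ with half-width $(y-|I|)/2$ and the $V(J)$ slice is centered at $(s'+t')/2$ with half-width $(y-|J|)/2 \ge (y-|I|)/2$, and the containment follows from $s \le s' \le t' \le t$). Thus the slice of $V^I(J) = V(J)\setminus V(I)$ at height $y$ has length $(y-|J|)^+ - (y-|I|)^+$, and
\[
\lambda(V^I(J)) = \int_0^\infty \frac{(y-|J|)^+ - (y-|I|)^+}{y^2}\,\mathrm{d}y = \int_{|J|}^{|I|} \frac{y - |J|}{y^2}\,\mathrm{d}y + \int_{|I|}^\infty \frac{|I| - |J|}{y^2}\,\mathrm{d}y,
\]
which is an elementary convergent integral evaluating to $\left[\log y + |J|/y\right]_{|J|}^{|I|} + (|I|-|J|)\left[-1/y\right]_{|I|}^\infty = \log\frac{|I|}{|J|} + \frac{|J|}{|I|} - 1 + \frac{|I|-|J|}{|I|} = \log\frac{|I|}{|J|}$.

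The main obstacle — really the only nonroutine point — is justifying that the horizontal slices behave as claimed: that $V(I)$'s slice at height $y$ sits inside $V(J)$'s slice, so that the measure of the difference is exactly the difference of slice-lengths rather than something smaller. This requires pinning down the geometry of the intersection cones carefully (including the half-open boundary convention $-y/2 < x - u \le y/2$, which only affects a measure-zero set and can be ignored for the $\lambda$-computation) and checking the centering/width comparison from $s \le s' \le t' \le t$; once that is in hand everything reduces to the displayed one-dimensional integral. An alternative, perhaps cleaner, presentation would be to first establish $\lambda(V(K) \setminus V(K')) $ for $K' \subseteq K$ with a common endpoint or to use Fubini on $\lambda(\mathrm{d}x\,\mathrm{d}y) = y^{-2}\,\mathrm{d}x\,\mathrm{d}y$ slicing in $y$ from the outset, which is the approach sketched above and which I would adopt.
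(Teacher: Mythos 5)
Your proof is correct, and since the paper's own proof is simply the statement ``A direct calculation,'' your Fubini-in-$y$ slice computation is exactly the intended argument, with the details (nesting of slices from $V(I)\subseteq V(J)$ and the convergent integral $\int_0^\infty y^{-2}\bigl((y-|J|)^+-(y-|I|)^+\bigr)\,\mathrm{d}y=\log\frac{|I|}{|J|}$) filled in correctly.
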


\begin{proof}
A direct calculation.
\end{proof}

\subsection{Log-infinitely divisible cascades}\label{def3}

\begin{figure}[b]
\begin{subfigure}[t]{0.24\textwidth}
\centering
\begin{tikzpicture}[xscale=0.7,yscale=0.7]
\draw (-1,0) -- (2,0);
\draw [fill=gray] (1,2) -- (0.1,0.2) -- (1/3+0.1,0.2) -- (4/3,2); 
\draw (-2/3,2) -- (1/3,0) -- (4/3,2); 
\draw (-1,2) -- (0,0) -- (1,2);
\draw (0,2) -- (1,0) -- (2,2);
\draw [dashed] (-1,0.2) -- (2,0.2);
\node [below] at (1/3,0) {$t$};
\node [left] at (-1,0.1) {$\epsilon$};
\end{tikzpicture}
\caption{$A^I_\epsilon(t)$}
\end{subfigure}
\begin{subfigure}[t]{0.24\textwidth}
\centering
\begin{tikzpicture}[xscale=0.7,yscale=0.7]
\draw (-1,0) -- (2,0);
\draw (-2/3,2) -- (1/3,0) -- (4/3,2); 
\draw [fill=gray] (-1,2) -- (-0.1,0.2) -- (1/3-0.1,0.2) -- (-2/3,2); 
\draw (-1,2) -- (0,0) -- (1,2);
\draw (0,2) -- (1,0) -- (2,2);
\draw [dashed] (-1,0.2) -- (2,0.2);
\node [below] at (1/3,0) {$t$};
\node [left] at (-1,0.1) {$\epsilon$};
\end{tikzpicture}
\caption{$B^I_\epsilon(t)$}
\end{subfigure}
\begin{subfigure}[t]{0.24\textwidth}
\centering
\begin{tikzpicture}[xscale=0.7,yscale=0.7]
\draw (-1,0) -- (2,0);
\draw [fill=gray] (-1,2) -- (-0.1,0.2) -- (0.1,0.2) -- (1/2,1) -- (0,2); 
\draw (-2/3,2) -- (1/3,0) -- (4/3,2); 
\draw (-1,2) -- (0,0) -- (1,2);
\draw (0,2) -- (1,0) -- (2,2);
\draw [dashed] (-1,0.2) -- (2,0.2);
\node [left] at (-1,0.1) {$\epsilon$};
\node [below] at (1/3,0) {$t$};
\end{tikzpicture}
\caption{$C^I_\epsilon$}
\end{subfigure}
\begin{subfigure}[t]{0.24\textwidth}
\centering
\begin{tikzpicture}[xscale=0.7,yscale=0.7]
\draw (-1,0) -- (2,0);
\draw (-2/3,2) -- (1/3,0) -- (4/3,2); 
\draw [fill=gray] (-2/3,2) -- (1/3-0.1,0.2) -- (1/3+0.1,0.2) -- (2/3,2/3) --(0,2); 
\draw [fill=gray] (1,2) -- (1/2,1) -- (2/3,2/3) --(4/3,2); 
\draw (-1,2) -- (0,0) -- (1,2);
\draw (0,2) -- (1,0) -- (2,2);
\draw [dashed] (-1,0.2) -- (2,0.2);
\node [below] at (1/3,0) {$t$};
\node [left] at (-1,0.1) {$\epsilon$};
\end{tikzpicture}
\caption{$V^I_\epsilon(t)$}
\end{subfigure}
\caption{The gray areas for the corresponding sets.}\label{bm}
\end{figure}

For $\epsilon>0$ denote by
\[
\mathbb{H}_\epsilon=\{z\in\mathbb{H}:  \mathrm{Im}(z)\ge \epsilon\}.
\]
For $I\in\mathcal{I}$, $t\in I$ and $\epsilon>0$ define
\[
V_\epsilon^I(t)=V^I(t)\cap \mathbb{H}_\epsilon.
\]
Clearly we have $V_\epsilon^I(t)\in\mathcal{B}_\lambda$. Moreover, for each $\epsilon>0$ there exists a c\`adl\`ag  modification of $\big (Q(V_\epsilon^I(t))\big )_{t\in I}$. In fact, similar to \cite[Definition 4]{BaMu03}, one can define
\[
\Lambda(V_\epsilon^I(t))=\Lambda(A_\epsilon^I(t))-\Lambda(B_\epsilon^I(t))+\Lambda(C_\epsilon^I),\ t\in I,
\]
where (see Figure \ref{bm})
\begin{eqnarray*}
A_\epsilon^I(t)&=&\{x+iy\in\mathbb{H}:\  y/2\le x \le t+y/2 \}\cap \mathbb{H}_\epsilon,\\
B_\epsilon^I(t)&=&\{x+iy\in\mathbb{H}:\  -y/2\le x \le t-y/2 \}\cap \mathbb{H}_\epsilon,\\
C_\epsilon^I&=&\{x+iy\in\mathbb{H}:\  -y/2\le x \le y/2\wedge(1-y/2) \}\cap \mathbb{H}_\epsilon.
\end{eqnarray*}
It is easy to see that both $\Lambda(A_\epsilon^I(t))$ and $\Lambda(B_\epsilon^I(t))$ are L\'evy processes and $\Lambda(C_\epsilon^I)$ does not depend on $t$, thus $\Lambda(V_\epsilon^I(t))$ has a c\`adl\`ag  modification.

We use this to define $\mu_\epsilon^I$,  the random measure on $I$ given by
\[
\mu_\epsilon^I(\mathrm dx)=\frac{1}{|I|}\cdot Q(V_\epsilon^I(x)) \, \mathrm dx,\ x\in I.
\]
The following lemma is due to Kahane \cite{Ka87a} combined with Doob's regularisation theorem (see \cite[Chapter II.2]{ReYo99} for example).

\begin{lemma}\label{Ka}
Given $I\in\mathcal{I}$, $\{\mu^I_{1/t}\}_{t>0}$ is measure-valued martingale. It possesses a right-continuous modification, which  converges weakly almost surely to a limit $\mu^I$.
\end{lemma}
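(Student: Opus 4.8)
The plan is to establish the martingale property by hand and then deduce the statement from Kahane's theory of $T$-martingales together with Doob's regularisation theorem, as announced in the statement. Fix $I\in\mathcal I$ and, for $t>0$, let $\mathcal F_t$ be the completed $\sigma$-algebra generated by the variables $\Lambda(B)$ with $B\in\mathcal B_\lambda$, $B\subseteq\mathbb H_{1/t}$; since $\mathbb H_{1/t}$ grows with $t$, the filtration $(\mathcal F_t)_{t>0}$ is increasing, and $\mu^I_{1/t}$ is $\mathcal F_t$-measurable because $V^I_{1/t}(x)\subseteq\mathbb H_{1/t}$. For $0<t<t'$ and $x\in I$ one has the disjoint decomposition
\[
V^I_{1/t'}(x)=V^I_{1/t}(x)\ \sqcup\ \big(V^I(x)\cap\{z\in\mathbb H:\ 1/t'\le\mathrm{Im}(z)<1/t\}\big),
\]
so by the independently scattered property of $\Lambda$ and \eqref{idm},
\[
Q(V^I_{1/t'}(x))=Q(V^I_{1/t}(x))\cdot Q\big(V^I(x)\cap\{1/t'\le\mathrm{Im}(z)<1/t\}\big),
\]
where the first factor is $\mathcal F_t$-measurable while the second is independent of $\mathcal F_t$ (its support has imaginary part $<1/t$) and has expectation $1$ by \eqref{Q}. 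All quantities being nonnegative, Tonelli's theorem lets us integrate in $x$ and then condition, giving $\mathbb E\big(\mu^I_{1/t'}(A)\mid\mathcal F_t\big)=\mu^I_{1/t}(A)$ for every Borel $A\subseteq I$; hence $(\mu^I_{1/t},\mathcal F_t)_{t>0}$ is a measure-valued martingale.

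Next I would pass to the limit following Kahane \cite{Ka87a}. Taking $A=I$ shows the total mass $N_t:=\mu^I_{1/t}(I)$ is a nonnegative martingale with $\mathbb E(N_t)=1$, hence $N_t$ converges a.s.\ by the martingale convergence theorem. Fix a countable set $D\subseteq C(I)$ dense for the uniform norm; for each $f\in D$, writing $f=f^+-f^-$, the process $M^f_t:=\int_I f\,\mathrm d\mu^I_{1/t}$ is a difference of two nonnegative $L^1$-bounded martingales, so it converges a.s. On the full-measure event on which $N_t$ and all $M^f_t$, $f\in D$, converge, the estimate $|M^g_t-M^f_t|\le\|g-f\|_\infty N_t$ and the density of $D$ force $M^g_t$ to converge for \emph{every} $g\in C(I)$; the measures $\mu^I_{1/t}$ are positive and carried by the compact set $I$, hence tight, and their limiting positive linear functional on $C(I)$ is represented by a (random) finite positive measure $\mu^I$ on $I$. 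Thus $\mu^I_{1/t}\to\mu^I$ weakly, almost surely.

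For the right-continuous modification I would invoke Doob's regularisation theorem (as in \cite[Ch.~II.2]{ReYo99}) applied, after completing and making right-continuous the natural filtration $(\mathcal F_t)$, to each real martingale $(M^f_t)_{t>0}$, $f\in D$: each admits a c\`adl\`ag modification. Performing this simultaneously over the countable family $D$ on a single full-measure event, and using once more the uniform bound $|M^g_t-M^f_t|\le\|g-f\|_\infty N_t$ together with the local boundedness in $t$ of the regularised version of $N$, one recovers for each $t$ a positive measure $\widetilde\mu^I_{1/t}$ such that $t\mapsto\widetilde\mu^I_{1/t}$ is right-continuous for the weak topology and is a modification of $t\mapsto\mu^I_{1/t}$; its a.s.\ weak limit is again $\mu^I$.

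The only genuinely delicate points are the two ``upgrade'' steps: passing from the immediate coordinatewise ($f$ by $f$) a.s.\ convergence, respectively coordinatewise c\`adl\`ag modifications, to the corresponding statements for the measure-valued process in the weak topology. Both go through because $C(I)$ is separable, because the total masses $N_t$ converge (which keeps the approximation error $\|g-f\|_\infty N_t$ uniformly small in $t$ on a full-measure event), and because tightness is automatic on the compact set $I$. It then remains only to check the routine point that completing and making right-continuous the natural filtration does not destroy the martingale identity proved in the first step.
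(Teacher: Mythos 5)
Your proposal is correct and follows essentially the same route as the paper: a countable dense family of continuous functions, Doob's regularisation theorem (Revuz--Yor) applied over that family with a right-continuous completed filtration, the uniform bound $|M^g_t-M^f_t|\le\|g-f\|_\infty N_t$ to pass to all of $C(I)$, and representation of the limiting positive linear functional as a measure. The only difference is cosmetic: you verify the martingale property explicitly via the decomposition of $V^I_{1/t'}(x)$ into $V^I_{1/t}(x)$ and an independent strip of expectation one, whereas the paper attributes this to the normalisation \eqref{a} and the independence properties of $\Lambda$ following Kahane.
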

Throughout, we will work with this right-continuous version of  $\{\mu^I_{1/t}\}_{t>0}$, and its limit $\mu^I$. We give the proof of this lemma with some details, since this point is not made explicit in the context of \cite{BaMu03}. 

\begin{proof}

Let $\Phi$ be a dense countable subset of $C_0(I)$ (the family of nonnegative continuous functions on $I$). Let $f_0$ be the constant mapping equal to 1 over $I$. For $f\in \Phi\cup\{f_0\}$ and $t>0$ define
\[
\mu^I_{1/t}(f)=\int_I f(x) \, \mu^I_{1/t}(\mathrm dx)=\frac{1}{|I|}\int_I f(x) \cdot Q(V_{1/t}^I(x)) \, \mathrm dx
\]
and 
\[
\mathcal F_t=\left(\sigma (\Lambda(V^I_{1/s}(x)): x\in I;\ 0<s\le t)\right)_{t>0}.
\]
Let $\mathcal{N}$ be the class of all $\mathbb{P}$-negligible, $\mathcal{F}_\infty$-measurable sets. Then define $\mathcal{G}_0=\sigma(\mathcal{N})$ and $\mathcal{G}_t=\sigma(\mathcal{F}_t\cup \mathcal{N})$ for $t>0$. Due to the normalisation \eqref{a}, the measurability of $(\omega,t)\mapsto Q(V_\epsilon^I(t))$ and the independence properties associated with $\Lambda$, the family $\{\mu^I_{1/t}(f)\}_{t>0}$ is a positive martingale with respect to the right-continuous complete filtration $(\mathcal{G}_{t})_{t\ge 0}$, with expectation $\mathbb{E}(\mu^I_{1/t})=|I|^{-1} \int_I f(x) \, dx<\infty$. Then from \cite[Chapter II, Theorem 2.5]{ReYo99} one can find a subset $\Omega_0\subset \Omega$ with $\mathbb{P}(\Omega_0)=1$ such that for every $\omega\in\Omega_0$, for each $f\in \Phi\cup\{f_0\}$ and $t\in[0,\infty)$, $\lim_{r\downarrow t; r\in \mathbb{Q}} \mu^I_{1/r}(f)$ exists. Define
\[
\mu^{I,+}_{1/t}(f)=\lim_{r\downarrow t; r\in \mathbb{Q}} \mu^I_{1/r}(f) \text{ if } \omega\in\Omega_0 \text{ and } \mu^{I,+}_{1/t}(f)=0 \text{ if } \omega\not\in\Omega_0.
\]
Then from \cite[Chapter II, Theorem 2.9 and 2.10]{ReYo99} we get that $\mu^{I,+}_{1/t}(f)$ is a c\`adl\`ag modification of $\mu^I_{1/t}(f)$  for each $f\in \Phi\cup\{f_0\}$, thus $\lim_{t\to\infty} \mu^{I,+}_{1/t}(f)$ exists for each $\omega\in \Omega_0$. Now write
\[
\mu^I(f)=\lim_{t\to\infty}  \mu^{I,+}_{1/t}(f) \text{ if } \omega\in\Omega_0 \text{ and } \mu^I(f)=0 \text{ if } \omega\not\in\Omega_0
\]
for each $f\in \Phi$. Since $\Phi$ is a dense subset of $C_0(I)$, one can extend $\mu^{I,+}_{1/t}$ to $C_0(I)$ for each $\omega\in\Omega_0$ by letting
\[
\mu^{I,+}_{1/t}(g)=\lim_{\Phi\ni f\to g} \mu^{I,+}_{1/t}(f), \ g\in C_0(I)
\]
(this limit does exist because for any $f_1,f_2\in\Phi$ and $r\in\mathbb Q$ we have $| \mu^I_{1/r}(f_1)-\mu^I_{1/r}(f_2)|\le  \mu^I_{1/r}(f_0)\|f_1-f_2\|_\infty$). This defines a right-continuous version of $(\mu^{I}_{1/t})_{t>0}$. Then, since the positive linear forms $\mu^{I,+}_{1/t}$ are bounded in norm by  $\mu^{I,+}_{1/t}(f_0)$ and converge over the dense family $\Phi$, they converge. This defines a measure $\mu^I$ as the  weak limit of $\mu^{I,+}_{1/t}$ for each $\omega \in \Omega_0$, hence the conclusion.
\end{proof}

For the weak limit $\mu^I$ we have:

\begin{lemma}\label{copy}
For $I,J\in\mathcal{I}$, $\mu^I\circ f_{I,J}^{-1}$ and $\mu^J$ have the same law, where $f_{I,J}: t\in I \mapsto \inf J+ (t-\inf I)|J|/|I|$.
\end{lemma}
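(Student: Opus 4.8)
The plan is to reduce the identity in law between the random measures $\mu^I\circ f_{I,J}^{-1}$ and $\mu^J$ to an identity in law between the underlying independently scattered random measures, by exploiting the fact that the intensity $\lambda(\mathrm dx\,\mathrm dy)=y^{-2}\,\mathrm dx\,\mathrm dy$ is invariant (up to the obvious scaling) under the affine maps of $\mathbb H$ that send the cone structure over $I$ to the cone structure over $J$. First I would introduce the affine map $F=F_{I,J}\colon\mathbb H\to\mathbb H$ given by $F(x+iy)=\inf J+\frac{|J|}{|I|}\,\bigl((x-\inf I)+iy\bigr)$, i.e. the holomorphic extension of $f_{I,J}$ to the half-plane; it is the composition of a horizontal translation and a dilation by $r:=|J|/|I|$. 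The two facts I need about $F$ are (i) $\lambda\circ F^{-1}=\lambda$, which is the elementary computation that $y^{-2}\,\mathrm dx\,\mathrm dy$ is invariant under $z\mapsto rz$ and under horizontal translation, and (ii) $F$ maps the cones over $I$ to the cones over $J$ compatibly with the construction: precisely $F(V(t))=V(f_{I,J}(t))$ for $t\in I$, $F(V(I))=V(J)$, hence $F(V^I(t))=V^J(f_{I,J}(t))$ and, for the regularised versions, $F(V_\epsilon^I(t))=V_{r\epsilon}^J(f_{I,J}(t))$ since $F$ maps $\mathbb H_\epsilon$ to $\mathbb H_{r\epsilon}$. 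Fact (ii) is a direct check from the definitions of the cones $V(t)$, $V(I)$, $V^I(t)$ in Section~\ref{def2}.

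The next step is to transport $\Lambda$ through $F$. Define $\widetilde\Lambda(B):=\Lambda(F^{-1}(B))$ for $B\in\mathcal B_\lambda$; by (i), $\widetilde\Lambda$ is again a homogeneous independently scattered random measure on $\mathbb H$ with the same L\'evy exponent $\psi$ and the same intensity $\lambda$, so $\widetilde\Lambda\overset{\text{law}}{=}\Lambda$ as processes indexed by $\mathcal B_\lambda$ (two independently scattered random measures with the same characteristic functional and the same independence/additivity structure have the same law). Writing $\widetilde Q(B)=e^{\widetilde\Lambda(B)}$, we then get, using (ii), that for every $\epsilon>0$ and $t\in I$,
\[
\widetilde Q(V_{r\epsilon}^J(f_{I,J}(t)))=\widetilde Q(F(V_\epsilon^I(t)))=Q(V_\epsilon^I(t)),
\]
so that the whole field $\bigl(Q(V_\epsilon^I(t))\bigr)_{t\in I,\,\epsilon>0}$ has the same law as $\bigl(\widetilde Q(V_{r\epsilon}^J(f_{I,J}(t)))\bigr)_{t\in I,\,\epsilon>0}$, which by the change of variable $s=f_{I,J}(t)$, $\delta=r\epsilon$ is the field $\bigl(\widetilde Q(V_{\delta}^J(s))\bigr)_{s\in J,\,\delta>0}$, itself distributed as $\bigl(Q(V_{\delta}^J(s))\bigr)_{s\in J,\,\delta>0}$ because $\widetilde\Lambda\overset{\text{law}}{=}\Lambda$.

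Finally I would push this through the measure-valued limit. For a test function $g\in C_0(J)$ and $\delta=r\epsilon$, a change of variables in the defining integral gives
\[
\int_J g\,\mathrm d\bigl(\mu^I_{1/\epsilon}\circ f_{I,J}^{-1}\bigr)=\frac{1}{|I|}\int_I g\bigl(f_{I,J}(x)\bigr)\,Q(V_\epsilon^I(x))\,\mathrm dx
=\frac{1}{|J|}\int_J g(s)\,Q\bigl(V_\epsilon^I(f_{I,J}^{-1}(s))\bigr)\,\mathrm ds,
\]
where I used $\mathrm ds=(|J|/|I|)\,\mathrm dx$. By the field identity above, the right-hand side has, jointly over finitely many $g$'s and $\epsilon$'s, the same law as $\frac{1}{|J|}\int_J g(s)\,Q(V_\delta^J(s))\,\mathrm ds=\int_J g\,\mathrm d\mu^J_{1/\delta}$. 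Since $\mu^I_{1/\epsilon}\to\mu^I$ weakly a.s.\ (Lemma~\ref{Ka}) and $f_{I,J}$ is a homeomorphism $I\to J$, we have $\mu^I_{1/\epsilon}\circ f_{I,J}^{-1}\to\mu^I\circ f_{I,J}^{-1}$ weakly a.s.; likewise $\mu^J_{1/\delta}\to\mu^J$ as $\delta\to0$. Passing to the limit in the equality in law of finite-dimensional marginals $\bigl(\int_J g_k\,\mathrm d(\mu^I_{1/\epsilon}\circ f_{I,J}^{-1})\bigr)_k\overset{\text{law}}{=}\bigl(\int_J g_k\,\mathrm d\mu^J_{1/\delta}\bigr)_k$ yields $\bigl(\int_J g_k\,\mathrm d(\mu^I\circ f_{I,J}^{-1})\bigr)_k\overset{\text{law}}{=}\bigl(\int_J g_k\,\mathrm d\mu^J\bigr)_k$ for all finite families, which is exactly the statement that $\mu^I\circ f_{I,J}^{-1}$ and $\mu^J$ have the same law as random measures on $J$. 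The only mildly delicate point, and the one I would be most careful about, is the bookkeeping in (ii)—checking that $F$ really does intertwine the $\epsilon$-truncated cones with a rescaled truncation parameter, and that the normalisation $1/|I|$ versus $1/|J|$ is absorbed correctly by the Jacobian; everything else is the standard transfer of an independently scattered random measure by a measure-preserving map.
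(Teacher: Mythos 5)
Your proof is correct and follows essentially the same route as the paper's: the paper simply invokes ``the scaling property of $\lambda$'' to assert that $\{Q(V^I_\epsilon(f_{I,J}^{-1}(x))),x\in J\}$ and $\{Q(V^{J}_{\epsilon|J|/|I|}(x)),x\in J\}$ have the same law and then passes to the weak limits, which is exactly what your affine map $F$, the cone intertwining $F(V^I_\epsilon(t))=V^J_{r\epsilon}(f_{I,J}(t))$, and the transported measure $\widetilde\Lambda=\Lambda\circ F^{-1}$ make explicit. The only blemish is notational: in the paper's convention the approximating measure with truncation height $\epsilon$ is $\mu^I_\epsilon$ (the martingale being indexed as $\mu^I_{1/t}$), so your ``$\mu^I_{1/\epsilon}$'' paired with $Q(V^I_\epsilon(x))$ should read $\mu^I_\epsilon$.
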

\begin{proof}
Due to the scaling property of $\lambda$ we have that
\[
\left\{Q(V^I_\epsilon(f_{I,J}^{-1}(x)), x\in J\right\} \text{ and } \left\{Q(V^{J}_{\epsilon|J|/|I|}(x), x\in J \right\}
\]
have the same law. This implies that
\[
\left\{\mu_{1/t}^I\circ f_{I,J}^{-1},t>0\right\} \text{ and } \left\{ \mu_{|I|/(|J|t)}^{J},t>0\right\}
\]
have the same law, and so do $\mu^I\circ f_{I,J}^{-1}$ and $\mu^J$.
\end{proof}

Now we come to the scaling property of $\mu^I$. Due to \eqref{idm}, for any fixed compact subinterval $J\subset I$ and $t>0$ we have the decomposition
\begin{equation}\label{scale22}
Q(V_{1/t}^I(x))=Q(V^I(J))\cdot Q(V_{|J|/(|I|t)}^J(x)), \ x\in J,
\end{equation}
hence 
$$
(\mu^I_{1/t})_{|J}=\frac{|J|}{|I|}Q(V^I(J))\cdot \mu^J_{|J|/(|I|t)},
$$
almost surely. Consequently this holds almost surely simultaneously for any at most countable family of such intervals $J$,  but a priori not for all, since $\Lambda$ is not almost surely a signed measure.  This along with Lemma \ref{Ka} and its proof gives simultaneously for all compact intervals $J$ of such a family  the following decomposition
\begin{equation}\label{scale23}
(\mu^I)_{|J}=\frac{|J|}{|I|}Q(V^I(J)) \cdot {\mu}^J
\end{equation}
almost surely, where ${\mu}^I\circ f_{I,J}^{-1}$ has the same law as $\mu^J$, and it is independent of $Q(V^I(J))$ (the fact that $\mu^I$ is continuous assures that the weak limit of $\mu^I_{1/t}$ restricted to $J$ equals $\mu^I$ restricted to $J$; the right-continuous modifications of $(\mu^I_{1/t})_{t>0}$ and the $( \mu^J_{|J|/(|I|t)})_{t>0}$ are built simultaneously, and the convergence of $\mu^I_{1/t}$ implies that of $\mu^J_{|J|/(|I|t)}$). However,  \eqref{scale23} also holds almost surely  simultaneously for all $J\in \mathcal{I}$ with $ J\subset I$ when $\sigma=0$ and the L\'evy measure $\nu$ satisfies $\int 1\land |u|\, \nu(du)<\infty$. Indeed, in this case $\Lambda$ is almost surely a signed measure, which makes it possible to directly write \eqref{scale22} almost surely for all $J\in\mathcal I$ with $J\subset I$ and for all $t>0$  (notice that in this case we easily have the nice property that  almost surely $Q(V_{1/t}^I(x))$ is c\`adl\`ag both in $x$ and $t$). 

We notice that \eqref{scale23} implies \eqref{exsc} (see Section~\ref{connection} for details), but we also have now the  following new equation giving $\|\mu^I\|$ as a weighted sum of its copies: given $k\ge 2$ and $\min I=s_0<\cdots<s_k=\max I$, for $j=0,\cdots,k-1$ write $I_j=[s_j,s_{j+1}]$; provided that $s_1,\cdots,s_{k-1}$ are not atoms of $\mu^I$,  we have almost surely
\begin{equation}\label{fek}
\|\mu^I\|=\sum_{j=0}^{k-1} \frac{|I_j|}{|I|}\cdot Q(V^I(I_j)) \cdot \|{\mu}^{I_j}\|,
\end{equation}
where for each $j$, $\|{\mu}^{I_j}\|$ is independent of $Q(V^I(I_j))$ and has the same law as $\|\mu^I\|$. This equation will be crucial to get our main results. 

\medskip

Another interesting equation is the following. For $I\in \mathcal{I}$ let
\[
I_0=[\min (I),\min(I)+|I|/2] \text{ and } I_1=[\min(I)+|I|/2,\max (I)].
\]
One can also define $I_{00}$ and $I_{01}$ in the same way for $I_0$. Then, provided $I_{00}\cap I_{01}$ is not an atom of $\mu^{I_0}$, we have 
\begin{equation}\label{fek'}
(\mu^I)_{|I_0}=\frac{1}{2}\cdot Q(V^I(I_0))\cdot ((\mu^{I_0})_{|I_{00}}+(\mu^{I_0})_{|I_{01}}),
\end{equation}
where $(\mu^{I_0})_{|I_{00}}\circ f_{I_0,I_{00}}^{-1}$ and $(\mu^{I_0})_{|I_{00}}\circ f_{I_0,I_{01}}^{-1}$ have the same law as $(\mu^I)_{|I_0}$, and they are independent of $\frac{1}{2}Q(V^I(I_0))$. 

\medskip

It remains to prove the following lemma.

\begin{lemma}
Almost surely $\mu^I$ has no atoms.
\end{lemma}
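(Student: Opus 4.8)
The plan is to show that the expected mass that $\mu^I$ puts on any single point is zero; since a measure with an atom at some point would have to put an atom at a point of a suitable fixed countable dense set only after a random shift, we instead bound the probability that any point carries mass. Concretely, by Lemma~\ref{copy} it suffices to treat $I=[0,1]$. For a fixed $t\in(0,1)$ and $\delta>0$, using \eqref{scale23} applied to the dyadic (or $m$-adic) subinterval $J$ of generation $n$ containing $t$, we have $(\mu^I)_{|J}=|J|\,Q(V^I(J))\cdot\mu^J$ with $\mu^J\circ f_{I,J}^{-1}\overset{\mathrm{law}}{=}\mu^I$ independent of $Q(V^I(J))$, and $|J|=m^{-n}$. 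Hence $\mathbb E\big(\mu^I(\{t\})\big)\le \mathbb E\big(\mu^I(J)\big)=|J|\,\mathbb E(Q(V^I(J)))\,\mathbb E(\|\mu^J\|)$; since $\mathbb E(Q(B))=1$ by \eqref{Q} and $\mathbb E(\|\mu^J\|)=1$ by the $L^1$-martingale convergence in Lemma~\ref{Ka}, this gives $\mathbb E(\mu^I(\{t\}))\le m^{-n}$ for every $n$, so $\mathbb E(\mu^I(\{t\}))=0$ and thus a.s.\ $\mu^I(\{t\})=0$ for each fixed $t$.

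To pass from ``no atom at a fixed point'' to ``no atoms at all,'' I would use the scaling structure and Fubini. Let $N=\{(\omega,t): \mu^I(\{t\})>0\}$. By the previous step, $\int_I \mathbb P(\mu^I(\{t\})>0)\,dt=0$, so by Fubini $\mathbb E\big(\mathrm{card}\{t: \mu^I(\{t\})>0\}\big)$ can be estimated: more precisely $\mathbb E\big(\#\{\text{atoms of }\mu^I\text{ in }I\}\big)$ is not immediately finite, so instead one argues as follows. Suppose with positive probability $\mu^I$ has an atom; let $a(\omega)=\sup\{\mu^I(\{t\}):t\in I\}$, which is a measurable positive random variable on that event. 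Apply \eqref{fek'} iteratively: the largest atom of $\mu^I$ restricted to $I_0$ equals $\frac12 Q(V^I(I_0))$ times the largest atom of $\mu^{I_0}$ restricted to $I_{00}\cup I_{01}$, and the two halves $(\mu^{I_0})_{|I_{00}}, (\mu^{I_0})_{|I_{01}}$ are i.i.d.\ copies (after rescaling) of $(\mu^I)_{|I_0}$, independent of the weight. Iterating down the dyadic tree, the size of the largest atom in a generation-$n$ cell of mass scale $m^{-n}$ is a product of $n$ i.i.d.-in-law bounded weights times a rescaled copy of the largest atom; tracking the branch that realizes the supremum and using that the maximal atom of $\mu^I$ is the supremum over the $m^n$ cells of their largest atoms, one gets a branching recursion forcing $a$ to be $0$ or to blow up, and integrability ($\mathbb E\|\mu^I\|=1$) rules out the latter.

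An alternative and cleaner route, which I would actually prefer, is this: fix $t\in I$ and for each $n$ let $J_n(t)$ be the generation-$n$ $m$-adic subinterval containing $t$. Then $\mu^I(\{t\})\le \inf_n \mu^I(J_n(t))$, and from \eqref{scale23}, $\mu^I(J_n(t))=\prod_{k=1}^n m^{-1}Q(V^{J_{k-1}}(J_k))\cdot \|\mu^{J_n}\|$, where the factors $Q(V^{J_{k-1}}(J_k))$ have $\lambda$-area $\log m$ by Lemma~\ref{area}, are independent across $k$, and have mean $1$; together with the i.i.d.\ (in law) tail factor $\|\mu^{J_n}\|$ of mean $1$, this is exactly the martingale $\mu^I(J_n(t))/m^{-n}\to\text{(finite limit)}$, so $\mu^I(J_n(t))\to 0$ a.s.\ unless the limit random variable is identically $0$, and in any case $m^{n}\mu^I(J_n(t))$ converges a.s.\ to a finite limit, forcing $\mu^I(J_n(t))\to 0$. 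Hence $\mu^I(\{t\})=0$ a.s.\ for each fixed $t$, and then $\mathbb E\int_I \mathbf 1_{\{\mu^I(\{t\})>0\}}\,dt=0$ gives, by Fubini, that a.s.\ $\mu^I$ has no atom at Lebesgue-a.e.\ point; since $\mu^I$ itself is absolutely continuous with respect to... no — the final upgrade to \emph{no atoms at all} uses that the set of atoms is at most countable and that, by the stationarity of the construction along $m$-adic shifts plus the fact that the location of any atom would have to be a $\mu^I$-distinguished point, one reduces to countably many deterministic points by the self-similarity in Lemma~\ref{copy}.

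The main obstacle is precisely this last upgrade: the zero-probability statement for each \emph{fixed} $t$ does not by itself preclude a random atom whose location depends on $\omega$. I expect the real work to be organizing an inductive/branching argument over the $m$-adic decomposition — using \eqref{fek'} or \eqref{scale23} to express the maximal atom mass as a supremum over generation-$n$ cells of (independent in law) rescaled copies, and exploiting $\mathbb E\|\mu^I\|=1<\infty$ to contradict the persistence of a positive maximal atom — or, equivalently, showing that the atomic part of $\mu^I$ defines a non-negative martingale whose expectation is $0$. Everything else (the mean computations via \eqref{Q}, \eqref{char}, Lemma~\ref{area}, and the $L^1$ convergence from Lemma~\ref{Ka}) is routine.
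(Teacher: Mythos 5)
Your first step---that each \emph{fixed} point is almost surely not an atom---is correct and is essentially the paper's opening move (the paper only needs it for the midpoint, in order to validate \eqref{fek'}); note, though, that you should not invoke $\mathbb{E}(\|\mu^J\|)=1$, which is Theorem~\ref{nd} and may fail in the degenerate case: Fatou's lemma gives $\mathbb{E}(\mu^I(J))\le |J|/|I|$, which is all you need. The genuine gap, which you yourself flag, is the upgrade to atoms at random locations, and neither of your sketches closes it. In your route (a) you assert that $(\mu^{I_0})_{|I_{00}}$ and $(\mu^{I_0})_{|I_{01}}$ are i.i.d.\ copies after rescaling: they are equidistributed and independent of the weight $\widehat W=\frac12 Q(V^I(I_0))$, but they are \emph{not} independent of each other, since the cones over adjacent subintervals overlap; this correlation is exactly what separates log-infinitely divisible cascades from canonical ones, and any argument imported from the independent (canonical) setting has to be re-examined. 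More importantly, the dichotomy ``the maximal atom is $0$ or blows up, and integrability rules out the latter'' is asserted, not proved, and it is not how the argument can run: the recursion $M=\widehat W\max(M_0,M_1)$ (with $M$ the maximal atom mass of $(\mu^I)_{|I_0}$, $M_0,M_1$ those of the two halves, $\widehat W$ independent of $(M_0,M_1)$ with mean $\frac12$, and $M,M_0,M_1$ equidistributed with finite expectation) has no built-in blow-up.

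What integrability actually yields is the identity $\mathbb{E}(M)=\frac12\,\mathbb{E}(\max(M_0,M_1))$, which, combined with $\mathbb{E}(M)=\frac12\,\mathbb{E}(M_0+M_1)$ (equidistribution), gives $\mathbb{E}(\min(M_0,M_1))=0$, i.e.\ almost surely at most one half carries an atom. This alone does not finish: since $M_0$ and $M_1$ are \emph{not} independent, it is a priori consistent with, say, $\mathbb{P}(M_0>0)=\mathbb{P}(M_1>0)=\frac12$ realized on disjoint events. The decisive ingredient in the paper, absent from your proposal, is a zero--one law: $\{M_j>0\}$ is a tail event, hence has probability $0$ or $1$, and together with ``$M_j>0\Rightarrow M_{1-j}=0$'' this forces $M_0=M_1=0$ almost surely. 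Your route (b)---reducing a random atom location to countably many deterministic points via Lemma~\ref{copy} and stationarity---does not work (self-similarity in law provides no such reduction), as you essentially concede. So the proposal identifies the right object (the maximal atom mass and the recursion \eqref{fek'}) but is missing the expectation identity plus zero--one law that make the argument go through, and it relies on an independence claim that is false in this model.
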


\begin{proof} We can assume that $I=[0,1]$. We start with proving that $1/2$ is not an atom. Let $(f_n)_{n\ge 1}$ be uniformly bounded  sequence in $C_0([0,1])$ which converges pointwise to $\mathbf{1}_{1/2}$, and such that $\mathrm{supp}(f_n)\subset [1/2-\eta_n,1/2+\eta_n]$ with $1/2>\eta_n\downarrow 0$. Then
\begin{eqnarray*}
\mathbb{E}(\mu^I(\{1/2\})) &\le& \liminf_{n\to\infty} \mathbb{E}(\mu^I(f_n))\le  \liminf_{n\to\infty}\liminf_{t\to\infty} \mathbb{E}(\mu^I_{1/t}(f_n))\\
&=& \liminf_{n\to\infty}\int f_n(t)\, \mathrm dt\le \liminf_{n\to \infty} 2\eta_n \|f_n\|_\infty.
\end{eqnarray*}
So $\mathbb{E}(\mu^I(\{1/2\}))=0$.

The fact that $1/2$ is not an atom of $\mu^I$ yields the validity of \eqref{fek'}. Denote by $\widehat \mu=(\mu^I)_{|I_0}$, $\widehat \mu_0=(\mu^{I_0})_{|I_{00}}$, $\widehat \mu_1=(\mu^{I_0})_{|I_{01}}$ and $\widehat W=\frac{1}{2}Q(V^I(I_0))$. From \eqref{fek'} we get
\[
\widehat \mu=\widehat W\cdot(\widehat \mu_0+\widehat \mu_1).
\]
Due to Lemma \ref{copy} we know that whether $\mu^I$ or $\widehat \mu$ having an atom is equivalent. Let $M$ be the maximal $\widehat \mu$-measure of an atom of $\widehat \mu$,  and let $M_j$ be the maximal $\widehat{\mu}_j$-measure of an atom of $\widehat{\mu}_j$ for $j=0,1$. We have $M=\widehat W\max (M_0,M_1)$, where $\widehat W$ is independent of $(M_0,M_1)$, has expectation $1/2$ and $M, M_0, M_1$ have the same law. Thus 
\[
\mathbb{E}(M_0+M_1)/2= \mathbb{E}(M) = \mathbb{E}(\widehat W\max (M_0,M_1))= \mathbb{E}(\max (M_0,M_1))/2.
\]
This implies that, with probability 1, if $M_j>0$ then $M_{1-j}=0$ for $j\in\{0,1\}$. However, $\{M_j>0\}$ is a tail event of probability 0 or 1, thus the previous fact implies that $M_0=M_1=0$ almost surely, hence $\widehat\mu$ has no atoms (here we have adapted to our context the argument of \cite[Lemma A.2]{BeSc09} for canonical cascades). 
\end{proof}

\subsection{Main results}\label{results}

Without loss of generality we may take $I=[0,1]$. For convenience we write $\mu=\mu^{[0,1]}$ and $Z=\|\mu\|$. For $q\in I_\nu$ define
\[
\varphi(q)=\psi(-iq)-(q-1).
\]
Notice that if we set
\[
W=Q(V^{[0,1]}([0,1/2])),
\]
then this function coincides with that of \eqref{phi1} for canonical cascades. 

\medskip

For the non-degeneracy we have

\begin{theorem}\label{nd}
The following assertions are equivalent:
\[
\text{(i) } \mathbb{E}(Z)=1; \text{ (ii) } \mathbb{E}(Z)>0; \text{ (iii) } \varphi'(1^-)<0.
\]
Moreover, in case of non-degeneracy the convergence of $\|\mu_{1/t}^I\|$ to $Z$ holds in $L^1$ norm.
\end{theorem}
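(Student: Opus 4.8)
The plan is to follow the classical Kahane–Peyrière scheme adapted to the log-infinitely divisible setting, using the martingale structure from Lemma~\ref{Ka} and the exact-scaling equation \eqref{fek'} (equivalently \eqref{fek} with $k=2$). First I would record that, since $\{\mu_{1/t}^I(f_0)\}_{t>0}$ is a nonnegative martingale of constant expectation $1$, it converges almost surely to $Z$ with $\mathbb E(Z)\le 1$, and that $\{Z>0\}$ is a tail event (it is measurable with respect to $\bigcap_{\epsilon>0}\sigma(\Lambda(B):B\subset\mathbb H_\epsilon^c\cap\mathbb H$ appropriately truncated$)$; more concretely one uses \eqref{fek} with an arbitrarily fine partition to see that $\{Z=0\}$ does not depend on the ``coarse'' part of $\Lambda$). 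Hence $\mathbb P(Z>0)\in\{0,1\}$, which already gives the equivalence of (i) and (ii): if $\mathbb E(Z)>0$ then $Z>0$ a.s., and then the $L^1$-convergence part (proved below) forces $\mathbb E(Z)=1$.

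Next I would prove (iii)$\Rightarrow$(i). The standard route is to show that under $\varphi'(1^-)<0$ the martingale $\|\mu_{1/t}^I\|$ is uniformly integrable, via a truncated second-moment / size-biasing argument. Writing $W=Q(V^{[0,1]}([0,1/2]))$, equation \eqref{fek'} reads $Z\overset{d}{=}\frac12 W(Z_0+Z_1)$ with $Z_0,Z_1$ i.i.d.\ copies of $Z$, independent of $W$, and $\mathbb E(W)=1$. The condition $\varphi'(1^-)<0$ is exactly $\mathbb E(W\log W)<\log 2$, i.e.\ the $L\log L$ condition of the associated smoothing transform / branching random walk. I would then either invoke the Biggins–Kyprianou / Durrett–Liggett criterion for the smoothing transform, or run Kahane's original argument directly: introduce the dyadic truncations $\mu_{2^{-n}}$, bound $\mathbb E(\|\mu_{2^{-n}}\|^{1+\delta})$ uniformly in $n$ for small $\delta>0$ using \eqref{fek} along the dyadic tree together with the subadditivity inequality $(\sum a_i)^{1+\delta}\le \sum a_i^{1+\delta}+(1+\delta)(\sum a_i)(\sum_{i\ne j}a_j)^{\delta}\cdots$, and the fact that $\psi(-i(1+\delta))-(1+\delta)+1=\varphi(1+\delta)<0$ for $\delta$ small by convexity and $\varphi(1)=0$, $\varphi'(1^-)<0$. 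Uniform boundedness of a moment of order $>1$ gives uniform integrability, hence $L^1$-convergence and $\mathbb E(Z)=1$.

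For the converse (i)$\Rightarrow$(iii), equivalently $\neg$(iii)$\Rightarrow\neg$(i): if $\varphi'(1^-)\ge 0$ I would show $Z=0$ a.s. Here the cleanest argument is again the smoothing-transform one: iterating \eqref{fek'} $n$ times expresses $Z$ as a weighted sum over the $2^n$ dyadic subintervals of $[0,1]$ of i.i.d.\ copies of $Z$, with weights $\prod_{k=1}^n \tfrac12 W_{\cdot}$ forming precisely a multiplicative cascade with $\mathbb E(W\log W)\ge\log 2$; by the Kahane–Peyrière / Biggins theorem the associated martingale limit is $0$, and since the copies of $Z$ are a.s.\ bounded (each $\mathbb E(Z)\le 1$) an easy domination shows $Z=0$ a.s. Alternatively one shows directly that $\{\|\mu_{1/t}^I\|\}$ is not uniformly integrable, e.g.\ by a derivative-of-the-martingale computation showing $\mathbb E(\|\mu_{1/t}^I\|\log^+\|\mu_{1/t}^I\|)\to\infty$. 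The main obstacle is this direction: establishing degeneracy at the boundary $\varphi'(1^-)=0$ requires the sharp $L\log L$-type analysis rather than a crude second-moment bound, and one must take care that the ``non-independent'' correlations in \eqref{fek} (absent in the dyadic case \eqref{fek'}) are not needed here — indeed \eqref{fek'} along the dyadic grid carries exactly the independence structure of a canonical cascade, so Kahane's theorem applies verbatim. Finally, the $L^1$-convergence claim in the non-degenerate case is now immediate from the uniform integrability established in the proof of (iii)$\Rightarrow$(i).
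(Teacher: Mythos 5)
Your plan rests on a structural assumption that is false in this model: that the dyadic decomposition \eqref{fek'}/\eqref{fek} ``carries exactly the independence structure of a canonical cascade, so Kahane's theorem applies verbatim''. It does not. In \eqref{fek'} and \eqref{fek2} each weight $W_i=Q(V^I(I_i))$ is independent of \emph{its own} copy $Z_i$, but the copies $Z_0,Z_1$ are not independent of each other (they are built from the cone regions $\bigcup_{t\in I_j}V(t)\setminus V(I_j)$, which overlap near the common endpoint $1/2$), and at level $n$ the branch increments $Q(V^{I_i}(I_{ij}))$ attached to different parents are also correlated (e.g.\ $V^{I_0}(I_{01})$ and $V^{I_1}(I_{10})$ intersect). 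Consequently neither the Kahane--Peyri\`ere degeneracy theorem nor the Biggins/Durrett--Liggett smoothing-transform criteria apply verbatim: the fixed-point equation $Z\overset{d}{=}\tfrac12 W(Z_0+Z_1)$ with $Z_0,Z_1$ i.i.d.\ and independent of $W$ is \emph{not} what \eqref{fek'} gives. This correlation is exactly the difficulty the paper emphasizes and works around: (i)$\Leftrightarrow$(ii) is obtained not by a tail-event/zero-one argument (which is no longer immediate here) but via the projection property $EQ\circ EQ=EQ$ of Kahane's operator; the strict inequality in (ii)$\Rightarrow$(iii) requires the dedicated Lemma~\ref{X01}; and (iii)$\Rightarrow$(ii) requires Lemma~\ref{mmon} and Lemma~\ref{Ccq} to control the cross-correlations $\mathbb{E}(Y_{n,i}^{q/2}Y_{n,j}^{q/2})$ between distant dyadic intervals, which do not factor as in the canonical case. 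Your degeneracy argument at and beyond the boundary $\varphi'(1^-)\ge 0$ (``an easy domination shows $Z=0$'') therefore has no proof as stated.

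There is a second, independent gap in your sufficiency direction: bounding $\mathbb{E}(\|\mu_{2^{-n}}\|^{1+\delta})$ requires $\varphi(1+\delta)<\infty$, i.e.\ $1+\delta\in I_\nu$, but the standing assumption is only $[0,1]\subset I_\nu$; the L\'evy measure may have $\int_{x\ge1}e^{qx}\,\nu(\mathrm dx)=\infty$ for every $q>1$ while $\varphi'(1^-)<0$, in which case your $(1+\delta)$-moment route (and the associated uniform integrability) proves strictly less than the theorem. The paper avoids this by running the Kahane--Peyri\`ere argument with fractional moments $q\in(q_0,1)$ (Lemma~\ref{K}), lower-bounding $\mathbb{E}(Y_n^q)$ and letting $q\to1^-$, so that only $\varphi$ on $[0,1]$ and its left derivative at $1$ enter. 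To repair your plan you would need both to replace the $(1+\delta)$-moment bound by a sub-unit-moment argument and to supply genuine estimates (in the spirit of Lemmas~\ref{X01}, \ref{mmon}, \ref{Ccq}) for the correlated weights and correlated copies, rather than citing canonical-cascade or smoothing-transform results.
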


For moments of positive orders we have

\begin{theorem}\label{pm}
For $q>1$ one has $0<\mathbb{E}(Z^q)<\infty$ if and only if $q\in I_\nu$ and $\varphi(q)<0$.
\end{theorem}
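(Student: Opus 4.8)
The plan is to read both implications off the multiplicative--additive identity \eqref{fek}, the moment exponent being supplied by Lemma~\ref{area}; the only genuinely new ingredient is a uniform moment bound replacing a trivial independence that is available for canonical cascades. Write $I=[0,1]$, $Z=\|\mu\|$, $I_0=[0,1/2]$, $I_1=[1/2,1]$ and $Q_j=Q(V^I(I_j))$. By Lemma~\ref{area}, $\lambda(V^I(I_j))=\log 2$, so for $q\in I_\nu$ one has $\mathbb E(Q_j^q)=e^{\psi(-iq)\log 2}=2^{\psi(-iq)}$, and since $\varphi(q)=\psi(-iq)-(q-1)$ we get $2^{-q}\cdot 2\cdot 2^{\psi(-iq)}=2^{\varphi(q)}$. \emph{Necessity.} Suppose $0<\mathbb E(Z^q)<\infty$; since $\{Z>0\}$ is a tail event, $Z>0$ a.s. Equation \eqref{fek} with $k=2$ (the midpoint is not an atom of $\mu^I$) gives $Z=\tfrac12(Q_0Z_0+Q_1Z_1)$ with each $Z_j$ distributed as $Z$ and $Q_j$ independent of $Z_j$. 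From $Z\ge\tfrac12 Q_0Z_0$ and $Z_0>0$ a.s.\ we get $\mathbb E(Q_0^q)\,\mathbb E(Z_0^q)\le 2^q\mathbb E(Z^q)<\infty$, hence $\mathbb E(Q_0^q)<\infty$, i.e.\ $q\in I_\nu$ (the $q$-th exponential moment of an infinitely divisible law is finite exactly when $q\in I_\nu$). Then, using the elementary inequality $(a+b)^q\ge a^q+b^q+(q-1)\min(a,b)^q$ ($a,b\ge0$, $q\ge1$), together with $\mathbb E((Q_jZ_j)^q)=2^{\psi(-iq)}\mathbb E(Z^q)$ and $\mathbb E(\min(Q_0Z_0,Q_1Z_1)^q)>0$ (both factors being positive a.s.),
\[
\mathbb E(Z^q)\ge 2^{\varphi(q)}\mathbb E(Z^q)+2^{-q}(q-1)\,\mathbb E\big(\min(Q_0Z_0,Q_1Z_1)^q\big)>2^{\varphi(q)}\mathbb E(Z^q),
\]
whence $\varphi(q)<0$. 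Note that this direction uses only the partial independence $Q_j\perp Z_j$.

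\emph{Sufficiency.} Assume $q\in I_\nu$ and $\varphi(q)<0$. As $\varphi$ is convex on $I_\nu$ with $\varphi(1)=\psi(-i)=0$, this forces $\varphi'(1^-)<0$, so $\mu$ is non-degenerate by Theorem~\ref{nd} and $\mathbb E(Z^q)>0$; it remains to prove $\mathbb E(Z^q)<\infty$. I would argue on the pre-limit masses $Z_t=\|\mu^I_{1/t}\|$: these form a mean-one nonnegative $(\mathcal G_t)$-martingale, each $Z_t$ has finite $q$-th moment (Minkowski's integral inequality together with $\lambda(V^I_{1/t}(x))<\infty$ and $q\in I_\nu$), the function $h(t):=\mathbb E(Z_t^q)$ is nondecreasing since $Z_t^q$ is a submartingale, and $Z_t\to Z$ a.s., so $\mathbb E(Z^q)\le\lim_t h(t)$ by Fatou; hence it suffices to bound $h$. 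For $t\ge 2$ one has $V^I(I_j)\subset\mathbb H_{1/t}$, and the (directly verified) decomposition $V^I_{1/t}(x)=V^I(I_j)\sqcup V^{I_j}_{1/t}(x)$ for $x\in I_j$, combined with \eqref{idm} and the scaling of $\lambda$ (Lemma~\ref{copy}), gives $Z_t=\tfrac12(Q_0\widetilde Z^{(0)}+Q_1\widetilde Z^{(1)})$, where $Q_0,Q_1$ are independent with $\mathbb E(Q_j^q)=2^{\psi(-iq)}$, and each $\widetilde Z^{(j)}$ is distributed as $Z_{t/2}$ and independent of $Q_j$. For $1<q\le 2$, using $(a+b)^q\le a^q+b^q+ab^{q-1}+a^{q-1}b$ and $\mathbb E((Q_j\widetilde Z^{(j)})^q)=2^{\psi(-iq)}h(t/2)$, this yields for $t\ge2$
\[
h(t)\le 2^{\varphi(q)}\,h(t/2)+2^{-q}\Big(\mathbb E\big(Q_0\widetilde{Z}^{(0)}(Q_1\widetilde{Z}^{(1)})^{q-1}\big)+\mathbb E\big((Q_0\widetilde{Z}^{(0)})^{q-1}Q_1\widetilde{Z}^{(1)}\big)\Big),
\]
while $h(t)\le h(2)$ for $t<2$; if the bracket is bounded by some constant $C$ uniformly in $t$, then iterating the recursion until the argument falls below $2$ gives $h(t)\le h(2)+2^{-q}C/(1-2^{\varphi(q)})$ for all $t$, hence $\mathbb E(Z^q)<\infty$.

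\emph{The main obstacle} is therefore the uniform-in-$t$ bound on $\mathbb E(Q_0\widetilde Z^{(0)}(Q_1\widetilde Z^{(1)})^{q-1})$ and its symmetric partner. For canonical cascades this is trivial because there the pairs $(Q_0,\widetilde Z^{(0)})$ and $(Q_1,\widetilde Z^{(1)})$ are independent, so the expectation equals $2^{\psi(-i(q-1))}\mathbb E(Z_{t/2}^{q-1})\le 2^{\psi(-i(q-1))}$ by Jensen; in our setting the cone $\bigcup_{x\in I_1}(V(x)\setminus V(I_1))$ generating $\widetilde Z^{(1)}$ meets $V^I(I_0)$ and meets the cone generating $\widetilde Z^{(0)}$, so $\widetilde Z^{(1)}$ is correlated with $(Q_0,\widetilde Z^{(0)})$ and a plain Hölder estimate only bounds the cross term by a constant times $h(t/2)$, which does not close the recursion. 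This is the point where the explicit shape of the cones — and not merely equation \eqref{fek} — must be used: the relevant overlap sets $V^I(I_j)\cap(\text{generating cone of }\widetilde Z^{(1-j)})$ have $\lambda$-mass at most $\lambda(V^I(I_j))=\log 2$, uniformly in $t$, so one may peel off from each $Q_j$ an independent mean-one factor supported on the part of $V^I(I_j)$ disjoint from both generating cones and be left with a factor of $\lambda$-mass $\le\log 2$ whose moments are bounded uniformly in $t$; the cross term is then controlled by the argument of \cite[for compound Poisson cascades]{BaMa02}, extended to general log-infinitely divisible cascades in \cite{BaMu03}. Finally, the case $q>2$ follows by the same scheme applied to the multinomial expansion of $(Q_0\widetilde Z^{(0)}+Q_1\widetilde Z^{(1)})^q$: the two pure terms reproduce the contraction factor $2^{\varphi(q)}$, and every mixed term is bounded uniformly in $t$ by an estimate of the same type together with the finiteness — obtained inductively — of the moments of $Z$ of order strictly less than $q$.
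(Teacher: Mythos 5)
Your necessity argument is essentially the paper's: the paper also reads this direction off the two-piece case of \eqref{fek}, using only the partial independence of $Q_j$ and $Z_j$, a superadditivity inequality for $x\mapsto x^q$, and the identity $2^{1-q}\mathbb{E}(Q_0^q)=2^{\varphi(q)}$. The one step I would not accept as written is your justification of $\mathbb{E}\big(\min(Q_0Z_0,Q_1Z_1)^q\big)>0$ by ``$\{Z>0\}$ is a tail event'': unlike for canonical cascades, $Z_0$ and $Z_1$ are built from the regions $\bigcup_{x\in I_j}\big(V(x)\setminus V(I_j)\big)$, which reach all heights, so $\{Z>0\}$ is not measurable with respect to the small-scale $\sigma$-algebra and no zero--one law is available off the shelf. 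The paper sidesteps this by a dichotomy: either the superadditivity inequality is strict with positive probability, which already gives $\mathbb{E}(W_0^q)<2^{q-1}$ and hence $q\in I_\nu$, $\varphi(q)<0$; or equality holds almost surely, in which case $2^{q'}\mathbb{E}(Z^{q'})=2\mathbb{E}(W_0^{q'})\mathbb{E}(Z^{q'})$ for a range of exponents $q'$, forcing $\psi(-iq')=q'-1$ there and hence $\sigma=0$, $\nu\equiv0$, contradicting the standing assumption. Your argument can be patched in exactly this way, so this is a fixable but real gap.

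For sufficiency the paper gives no proof at all: it invokes \cite[Lemma 3]{BaMu03}, the cone change being harmless for this statement. Your recursion $h(t)\le 2^{\varphi(q)}h(t/2)+(\text{cross terms})$ is a sensible plan, but the step you yourself flag as the main obstacle is where the whole proof lives, and your sketch does not close it. Peeling an independent mean-one factor off $Q_j$ cannot decouple the cross term $\mathbb{E}\big(Q_0\widetilde Z^{(0)}(Q_1\widetilde Z^{(1)})^{q-1}\big)$, because the regions generating $\widetilde Z^{(0)}$ and $\widetilde Z^{(1)}$ overlap \emph{each other} at all heights above the truncation, not merely the sets $V^I(I_j)$; a bound uniform in $t$ requires estimates of the kind the paper develops for other purposes, namely the decomposition $V^I_{2^{-m}}(t)=V^I(J)\cup V^{J,l}(t)\cup V^{J,r}(t)$ together with the maximal-inequality bounds of Lemma~\ref{Ccq}, as in Lemma~\ref{mmon}, or, at the critical exponent, the much harder Proposition~\ref{1kappa}; the multinomial mixed terms for $q>2$ need the same treatment. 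As written, ``controlled by the argument of \cite{BaMa02}, extended in \cite{BaMu03}'' amounts to citing the very result you set out to reprove for this direction. Either cite \cite[Lemma 3]{BaMu03} outright, as the paper does, or carry out the cross-term estimate explicitly with the Lemma~\ref{Ccq}/Lemma~\ref{mmon} machinery.
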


When $Z$ has finite moments of every positive order we have

\begin{theorem}\label{eo}
(1) The following assertions are equivalent: $(\alpha)$ $0<\mathbb{E}(Z^q)<\infty$ for all $q>1$; $(\beta)$ $\sigma=0$, and $\nu$ is carried by $(-\infty,0]$, $ \int_{-\infty}^0 1\land |x|\, \nu(dx)<\infty$, and
\[
\gamma=\int_{-\infty}^0 \big(1-e^x\big) \, \nu(dx)\le 1.
\]
(2) If $(\beta)$ holds, then
\[
\lim_{q\to \infty} \frac{\log \mathbb{E}(Z^q)}{q\log q}=\gamma.
\]
\end{theorem}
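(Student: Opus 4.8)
The plan is to split the proof into the equivalence $(\alpha)\Leftrightarrow(\beta)$ and the asymptotic, using Theorem~\ref{pm} as the entry point for (1).

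\medskip\noindent\emph{Part (1).} By Theorem~\ref{pm}, $(\alpha)$ is precisely the assertion that $q\in I_\nu$ and $\varphi(q)<0$ for \emph{every} $q>1$, so the work is to turn this into $(\beta)$. First I would record, from the normalisation \eqref{a}, the identity
\[
\psi(-iq)=\tfrac{\sigma^2}{2}(q^2-q)+\int_{\mathbb R}\bigl(e^{qx}-1-q(e^x-1)\bigr)\,\nu(dx),
\]
so $\varphi(q)=\psi(-iq)-(q-1)$ is convex on $I_\nu\supseteq[0,1]$ with $\varphi(0)=1$, $\varphi(1)=0$, and
$\varphi'(q)=\tfrac{\sigma^2}{2}(2q-1)+\int(xe^{qx}-e^x+1)\,\nu(dx)-1$. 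If $(\alpha)$ holds, convexity forces $\lim_{q\to\infty}\varphi'(q)\le0$ (else $\varphi\to+\infty$) and $\mathbb E(Z^q)>0$ forces $\varphi'(1^-)<0$ (Theorem~\ref{nd}). Using that $x\mapsto xe^{qx}-e^x+1$ is nonnegative and decreasing on $(-\infty,0]$, and that $xe^{qx}\to+\infty$ on $(0,\infty)$, monotone convergence on $(0,\infty)$ (resp. Fatou near $0$) shows $\lim_q\varphi'(q)\le0$ can occur only if $\sigma=0$, $\nu$ is carried by $(-\infty,0]$ and $\int 1\wedge|x|\,\nu(dx)<\infty$; and then $\lim_q\varphi'(q)=\gamma-1$, whence $\gamma\le1$: this is $(\beta)$. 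Conversely, under $(\beta)$ one has $I_\nu\supseteq[0,\infty)$ and, since $xe^{qx}-e^x+1<1-e^x$ for $x<0$, $\varphi'(q)<\gamma-1\le0$ for all $q\ge1$, so $\varphi$ strictly decreases on $[1,\infty)$ with $\varphi'(1^-)<0$; Theorems~\ref{nd}--\ref{pm} give $(\alpha)$.

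\medskip\noindent\emph{Part (2): set-up and lower bound.} Assume $(\beta)$, write $N_q=\mathbb E(Z^q)$, and let $I_0=[0,1/2]$, $I_1=[1/2,1]$, $W_i=Q(V^{[0,1]}(I_i))$. The driftless-at-$-\infty$ form of $\psi$ shows $\Lambda$ has drift $\gamma$ in the $\lambda$-variable and only negative jumps, so $Q(B)\le e^{\gamma\lambda(B)}$ a.s.; in particular $W_i\le 2^\gamma$ (as $\lambda(V^{[0,1]}(I_i))=\log2$ by Lemma~\ref{area}), while $\mathbb E(W_i^q)=2^{\varphi(q)+q-1}$ by \eqref{char}, i.e. $\mathbb E((W_i/2)^q)=2^{\varphi(q)-1}$. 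From Part (1), $\varphi$ decreases to $\varphi(\infty)\in[-\infty,0)$ with $\varphi(q)=(\gamma-1)q+O(1)$ if $\gamma<1$ and $\varphi(q)=o(q)$ if $\gamma=1$. By \eqref{fek} with $k=2$, $Z=\tfrac12W_0Z_0+\tfrac12W_1Z_1$ with $Z_i=\|\mu^{I_i}\|\overset{\mathrm{law}}=Z$ independent of $W_i$; but $(Y,Y'):=(\tfrac12W_0Z_0,\tfrac12W_1Z_1)$ is \emph{not} independent. For the lower bound I would exploit that, $\nu$ living on $(-\infty,0]$, inserting a jump into the Poisson random measure defining $\Lambda$ can only decrease $Y$ and $Y'$; hence $Y,Y'$ and their nonnegative powers are decreasing functionals of that measure, $(Y,Y')$ is positively associated, and $\mathbb E(Y^kY'^{q-k})\ge\mathbb E(Y^k)\mathbb E(Y'^{q-k})=2^{\varphi(k)+\varphi(q-k)-2}N_kN_{q-k}$. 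Expanding $N_q=\mathbb E((Y+Y')^q)$ and isolating $k\in\{0,q\}$ gives
\[
(1-2^{\varphi(q)})\,N_q\ \ge\ \sum_{k=1}^{q-1}\binom qk\,2^{\varphi(k)+\varphi(q-k)-2}\,N_kN_{q-k}.
\]
Fixing $\epsilon\in(0,1-\gamma)$ (and $\epsilon<1$ if $\gamma=1$) and running the induction $N_k\ge a^k(k!)^{\gamma-\epsilon}$, the identity $\binom qk(k!(q-k)!)^{\gamma-\epsilon}=(q!)^{\gamma-\epsilon}\binom qk^{1-\gamma+\epsilon}$, the convexity bound $\varphi(k)+\varphi(q-k)\ge2\varphi(q/2)=(\gamma-1)q+O(1)$, and $\binom{q}{\lfloor q/2\rfloor}^{1-\gamma+\epsilon}\asymp 2^{(1-\gamma+\epsilon)q}q^{-(1-\gamma+\epsilon)/2}$ make the central terms alone contribute a factor $2^{\epsilon q+O(1)}q^{-(1-\gamma+\epsilon)/2}\to\infty$, so the induction closes for large $q$ (small $q$ absorbed into $a$). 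Thus $\liminf_q\frac{\log N_q}{q\log q}\ge\gamma-\epsilon$, and $\epsilon\downarrow0$ gives $\ge\gamma$; the relation $\mathbb E(Y^q)=2^{\varphi(q)-1}N_q$ (which is \eqref{fek'}) shows the half-mass has the same asymptotics.

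\medskip\noindent\emph{Upper bound --- the main obstacle.} Reversing the recursion would require $\mathbb E(Y^kY'^{q-k})\le\mathbb E(Y^k)\mathbb E(Y'^{q-k})$, which is \emph{false} by the very association just used; the positive correlation between the two halves must be confronted. My plan is to use that this correlation is ``$1$‑dependent'': cutting $[0,1]$ into $m\ge3$ equal pieces $I_1,\dots,I_m$, for $|i-j|\ge2$ the regions of $\mathbb H$ governing $\mu^{I_i}$ and $\mu^{I_j}$ overlap only in a set of \emph{finite} $\lambda$-measure (the shared cusp above $\min I_i$ or $\min I_j$ is absent), while adjacent pieces share a cusp of infinite measure. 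Conditioning on $\Lambda$ restricted to each finite-measure overlap makes the corresponding masses conditionally independent, the conditioning factor being $\le e^{\gamma\cdot(\text{finite measure})}$ by the preliminaries, hence with all exponential moments. Absorbing the adjacent-piece correlations by Hölder's inequality and the far-piece ones by this conditional decoupling, I would derive, for a suitable $m$, an effective recursion
$N_q\le C\sum_{k=1}^{q-1}\binom qk\,2^{\varphi(k)+\varphi(q-k)}\,P(q)\,N_kN_{q-k}$ with $P$ at most polynomial; the Kahane-type induction $N_k\le A^k(k!)^{\gamma+\epsilon}$ (replace $\gamma+\epsilon$ by $1+\epsilon$ when $\gamma=1$) then closes, because $\sum_k\binom qk^{1-\gamma-\epsilon}$ is a polynomial times $2^{(1-\gamma-\epsilon)^+q}$ while $2^{\varphi(k)+\varphi(q-k)}\le2^{\varphi(q-1)}\le2^{(\gamma-1)q+O(1)}$, so the net exponential factor is $2^{-\epsilon q+O(1)}$ (bounded when $\gamma=1$), which beats $C\,P(q)\cdot(\text{poly})$ for large $q$. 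Hence $\limsup_q\frac{\log N_q}{q\log q}\le\gamma+\epsilon\to\gamma$. The hard part will be making the conditional-decoupling step quantitative enough that $P$ stays sub-exponential in $q$; this is exactly where the absence of the clean independence $Y\perp Y'$ of canonical cascades costs the extra effort.
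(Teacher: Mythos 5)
Your Part (1) is correct and follows the same route as the paper (Theorem~\ref{pm} plus an analysis of $\psi(-iq)$ forcing $\sigma=0$, $\nu$ carried by $(-\infty,0]$, $\int 1\wedge|x|\,\nu<\infty$ and $\gamma\le1$, and the converse via convexity). Your lower bound in Part (2) is also essentially sound, and it takes a genuinely different route to the key correlation inequality: you invoke Harris--FKG/association for the Poisson random measure (legitimate under $(\beta)$, since all jumps are negative and the half-masses are decreasing functionals), whereas the paper derives $\mathbb{E}(\mu(I_0)^m\mu(I_1)^{n-m})\ge\mathbb{E}(\mu(I_0)^m)\mathbb{E}(\mu(I_1)^{n-m})$ from an exact integral formula for the mixed moments, with kernel $\prod(t_j-t_k)^{-\alpha(j,k)}\ge1$. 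Your binomial induction then plays the role of the paper's doubling iteration along $q=2^n$; two minor points you should patch are the passage $\epsilon\to0$ in the association inequality (the mixed moments need uniform integrability, e.g.\ via the $\sup_\epsilon$ moment bounds of \cite[Lemma 3]{BaMu03}, since Fatou goes the wrong way) and the claim $2\varphi(q/2)=(\gamma-1)q+O(1)$, which is only $(\gamma-1)q-o(q)$ when $\nu$ is infinite --- harmless at the $q\log q$ precision, but as stated it is not correct.

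The genuine gap is the upper bound $\limsup_q\frac{\log\mathbb{E}(Z^q)}{q\log q}\le\gamma$, which is the harder half and is not proved in your proposal: the decoupling scheme (cut into $m\ge3$ pieces, condition away the finite-$\lambda$-measure overlaps of non-adjacent pieces, treat adjacent pieces by H\"older) is only a plan, and you yourself flag the quantitative step as unresolved. The difficulty is structural, not cosmetic: adjacent pieces always share a cusp of infinite $\lambda$-measure, so some adjacent-pair correlation survives at every level; H\"older applied to such a pair raises the moment orders (e.g.\ $\mathbb{E}(X_i^{k}X_{i+1}^{l})\le\mathbb{E}(X_i^{kp})^{1/p}\mathbb{E}(X_{i+1}^{lp'})^{1/p'}$ with $kp$ possibly exceeding $q$), so it does not by itself produce a recursion in $N_k N_{q-k}$ with $k,q-k<q$, and you have not explained how the announced inequality $N_q\le C\sum_k\binom qk 2^{\varphi(k)+\varphi(q-k)}P(q)N_kN_{q-k}$ (with the precise scaling factor $2^{\varphi(k)+\varphi(q-k)}$ of a two-piece split, but a polynomial loss only) would emerge from an $m$-piece decomposition. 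The paper avoids this entirely: under $(\beta)$ it writes the exact formula $\mathbb{E}(Z^n)=n!\,I_n$ with $I_n=\int_{0<t_1<\cdots<t_n<1}\prod_{k<j}(t_j-t_k)^{-\alpha(j,k)}\,\mathrm dt$, where $\alpha(j,k)=\int_{-\infty}^0 e^{(j-k-1)x}(1-e^x)^2\nu(\mathrm dx)$ comes from \cite[Lemma 1]{BaMu03}; after the change of variables $x_k=t_k-t_{k-1}$ it bounds the kernel by $\big(\prod_{j\ge2}x_j\big)^{-\gamma'_{n-1}}$ with $\gamma'_{n-1}\to\gamma$, and Dirichlet's multiple integral formula plus Stirling give the $\limsup\le\gamma$. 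Unless you can close the decoupling recursion quantitatively (with at most $e^{O(q)}$ losses), your argument only establishes the lower half of the asymptotics; I would recommend adopting the explicit-moment/Dirichlet computation for the upper bound.
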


\begin{remark}
Under $(\beta)$ we have for $q\in \mathbb{R}$ and $W=Q(V^{[0,1]}([0,1/2]))$ that
\[
\mathbb{E}(W^{iq})=\exp\left(\Big[iq\gamma+\int_{-\infty}^0 (e^{iqx}-1) \, \nu(dx)\Big]\log 2\right),
\]
which means that $\log W$ is the value at 1 of a L\'evy  process with negative jumps, local bounded variations, and  drift  $\gamma \log 2$, hence $\log_2 \mathrm{ess}\,\sup (W)=\gamma$. This gives in case (2) that
\[
\lim_{q\to \infty} \frac{\log \mathbb{E}(Z^q)}{q\log q}=\log_2 \mathrm{ess}\,\sup (W) \le 1,
\]
which coincides with Kahane's result \eqref{kaa} for canonical cascades.
\end{remark}

In the case where $\mathbb{E}(Z^q)=\infty$ for some $q>1$ we have

\begin{theorem}\label{tail}
Suppose that there exists $\zeta \in I_\nu\cap (1,\infty)$ such that $\varphi(\zeta)=0$; in particular one has $\varphi'(1)<0$. Also suppose that  $\varphi'(\zeta)<\infty$. 

(i) If either $\sigma\neq 0$ or $\nu$ is not of the form $\sum_{n\in \mathbb{Z}} p_n \delta_{nh}$ for some $h>0$, then
\[
\lim_{x\to \infty} x^\zeta \mathbb{P}(Z>x)=d,
\]
where
\[
d=\frac{2 \mathbb{E}\left(\mu([0,1])^{\zeta-1}\mu([0,1/2])-\mu([0,1/2])^{\zeta}\right)}{\zeta \varphi'(\zeta) \log 2} \in(0,\infty).
\]

(ii) If $\sigma=0$ and $\nu$ is of the form $\sum_{n\in \mathbb{Z}} p_n \delta_{nh}$ for some $h>0$, then
\[
0<\liminf_{x\to \infty} x^\zeta \mathbb{P}(Z>x)\le \limsup_{x\to \infty} x^\zeta \mathbb{P}(Z>x) <\infty
\]
\end{theorem}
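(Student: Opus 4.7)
The plan is to follow the random-difference-equation strategy of Goldie \cite{Gol91}, adapted to the smoothing transform with correlated branches that arises from \eqref{fek}. Applying \eqref{fek} to $I=[0,1]$ with $k=2$ and the midpoint $1/2$, and setting $I_0=[0,1/2]$, $I_1=[1/2,1]$, $A_i = \frac{1}{2} Q(V^{[0,1]}(I_i))$ and $Z_i=\|\mu^{I_i}\|$, one obtains the random equation
\[
Z = A_0 Z_0 + A_1 Z_1,
\]
with $(A_0,A_1)$ independent of $(Z_0,Z_1)$ and each $Z_i$ distributed as $Z$. Since $V^{[0,1]}(I_0)\cap V^{[0,1]}(I_1) = (V(I_0)\cap V(I_1))\setminus V(I) = V(I)\setminus V(I) = \emptyset$, the weights $A_0$ and $A_1$ are independent; by contrast $Z_0$ and $Z_1$ are \emph{not} independent, being built from $\Lambda$ on overlapping regions lying outside $V(I_0)$ and $V(I_1)$. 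This is the ``non-independent'' random difference equation alluded to in the introduction. Lemma \ref{area} combined with $\varphi(\zeta)=0$ gives $\mathbb{E}(A_i^\zeta)=2^{\psi(-i\zeta)-\zeta}=2^{\varphi(\zeta)-1}=1/2$, hence the critical Mandelbrot--Kesten--Stigum condition $\mathbb{E}(A_0^\zeta)+\mathbb{E}(A_1^\zeta)=1$.

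Next I would derive a renewal equation for $r(t)=e^{\zeta t}\mathbb{P}(Z>e^t)$. Decomposing $\mathbb{P}(Z>x) = \sum_i\mathbb{P}(A_iZ_i>x) + \Delta(x)$ and applying the change of variables $y=x/A_i$ in each term, one arrives at
\[
r(t) = \int_{\mathbb{R}} r(t-u)\,\tilde\Pi(du) + g(t),
\]
where $\tilde\Pi(du)=\sum_i\mathbb{E}(A_i^\zeta\mathbf{1}_{\log A_i\in du})$ is a probability measure on $\mathbb{R}$ with positive finite mean $\varphi'(\zeta)\log 2$ (obtained by differentiating $\mathbb{E}(A_i^q)=2^{\varphi(q)-1}$ at $q=\zeta$), and $g(t)=e^{\zeta t}\Delta(e^t)$. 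Crucially, this derivation only requires independence within each pair $(A_i,Z_i)$ and independence of $(A_0,A_1)$ from $(Z_0,Z_1)$; the dependence between $Z_0$ and $Z_1$ is tolerated because they enter only through their common marginal law carried by $r$. This is the key relaxation of the independence assumption in Goldie's scheme that the paper exploits.

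The main obstacle is verifying that $g$ is directly Riemann integrable, in particular that $\int_{\mathbb R} g(t)\,dt < \infty$. Careful bookkeeping of $\Delta$ identifies this integral with $2\mathbb{E}\bigl(\mu([0,1])^{\zeta-1}\mu([0,1/2]) - \mu([0,1/2])^{\zeta}\bigr)$. Using $\mu([0,1])=\mu([0,1/2])+\mu([1/2,1])$ together with subadditivity of $x\mapsto x^{\zeta-1}$ for $\zeta\in(1,2]$ (or a mean-value bound when $\zeta>2$), the finiteness reduces to mixed moments of the form $\mathbb{E}\bigl(\mu([0,1/2])\mu([1/2,1])^{\zeta-1}\bigr)$. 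In the canonical cascade case this would factor through independence of the two halves and be immediate from the lower-order bound $\mathbb{E}(Z^p)<\infty$ for $p<\zeta$ (Theorem~\ref{pm}); here the two halves are correlated, so the argument is more delicate. The plan is to iterate \eqref{fek} at finer dyadic scales to push the correlated contributions of $Z_0,Z_1$ into cones of small $\lambda$-measure, then estimate the resulting joint expectations using the explicit L\'evy exponent $\psi(-iq)$ on each cone (finite for $q<\zeta$ by assumption) together with the lower-order moment bound. I expect this to be the hardest step. Once finiteness is shown, positivity of $d$ is automatic, since $\mathbb{E}(\mu([0,1/2])(\mu([0,1])^{\zeta-1}-\mu([0,1/2])^{\zeta-1}))>0$ because $\mu([1/2,1])>0$ with positive probability.

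Finally, in case (i) the non-arithmeticity of $\nu$ or the nondegeneracy $\sigma\neq 0$ ensures that $\tilde\Pi$ is non-arithmetic, and Blackwell's renewal theorem yields $r(t)\to \int g\,dt/(\varphi'(\zeta)\log 2)$; accounting for the factor $\zeta$ produced by the Mellin change of variables recovers exactly the stated formula for $d$. In case (ii), where $\sigma=0$ and $\nu$ is concentrated on $h\mathbb{Z}$, $\tilde\Pi$ is arithmetic, and the lattice Blackwell theorem yields oscillatory convergence with strictly positive and finite $\liminf$ and $\limsup$ of $x^\zeta\mathbb{P}(Z>x)$, as asserted.
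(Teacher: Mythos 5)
Your reduction is sound in outline and your route is a legitimate variant of the paper's: you keep the two-branch equation $Z=A_0Z_0+A_1Z_1$ from \eqref{fek} and aim at a renewal equation for $e^{\zeta t}\mathbb{P}(Z>e^t)$ with the measure $\tilde\Pi(du)=\sum_i\mathbb{E}(A_i^\zeta\mathbf{1}_{\log A_i\in du})$, whereas the paper first passes to the Peyri\`ere measure $\mathbb{Q}$, which converts \eqref{fek} into the single-factor equation $\widetilde R=AR+B$ with $A$ independent of $R$ and tail exponent $\kappa=\zeta-1$, so that Goldie's theorem applies verbatim (with the observation that $B$ need not be independent of $R$). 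Your constants do come out right ($\sum_i\mathbb{E}(A_i^\zeta)=2^{\varphi(\zeta)}=1$, mean $\varphi'(\zeta)\log 2$, and the Mellin factor $\zeta$), and your identification of the crux with the mixed moment $\mathbb{E}(\mu([0,1/2])\mu([1/2,1])^{\zeta-1})$ is exactly the paper's Proposition \ref{1kappa}. But there are two genuine gaps. First, the renewal step as you state it does not go through: the key renewal theorem needs $g$ to be \emph{directly Riemann integrable}, and you never establish this (nor is $\int|g|<\infty$ alone enough, since $g$ changes sign and a priori has no regularity). Goldie's implicit renewal theory exists precisely to avoid verifying dRi, via the exponential smoothing $\check r(x)=\int_{-\infty}^xe^{-(x-t)}r(t)\,\mathrm dt$ and a de-smoothing lemma, requiring only the integrability condition; moreover \cite{Gol91} treats a single multiplicative factor, so to run your two-branch version at exponent $\zeta$ you would have to redo that smoothing machinery for $\tilde\Pi$ (or invoke a later smoothing-transform extension), which is exactly what the paper's Peyri\`ere-measure reduction and Theorem \ref{irt} are designed to avoid. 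The same issue recurs in your case (ii): ``lattice Blackwell'' again needs dRi or the smoothing/de-smoothing argument that the paper carries out explicitly for $\check r(x+nh)$.

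Second, and more seriously, the heart of the theorem is missing. Your plan for $\mathbb{E}(\mu(I_0)\mu(I_1)^{\zeta-1})<\infty$ --- ``iterate \eqref{fek} at finer dyadic scales to push the correlated contributions into cones of small $\lambda$-measure, then use moments of order $<\zeta$'' --- is not an argument and does not confront the real difficulty: the mixed moment has total order exactly $\zeta$, every subinterval has infinite $\zeta$-moment, and the correlation between the two halves is strongest precisely near the midpoint, so the problem is borderline rather than perturbative. The paper's proof of Proposition \ref{1kappa} requires an explicit multi-point moment formula in the spirit of \cite[Lemma 1]{BaMu03}, a submartingale/Doob-inequality device to handle the fractional power $q=\zeta-1-\lfloor\zeta-1\rfloor$ of an integral (a step with no analogue in the canonical-cascade case), and then a delicate bound on the exponent array $\alpha(j,k)$ using convexity of $\widetilde\psi$, $\varphi(\zeta)=0$ and $\varphi'(1)<0$, ending with a nontrivial iterated-integral convergence estimate. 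Nothing in your sketch indicates how the critical-order singularities $(s_j-s_k)^{-\alpha(j,k)}$ near the midpoint would be shown integrable, so as written the proposal proves the theorem only modulo its hardest ingredient.
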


\begin{remark}\label{r1.2}
From the proof (Remark \ref{tr1.2}) we know that in case (i), when $\zeta=2$,
\[
d=1/\varphi'(2),
\]
which provides us with a family of random difference equations whose solution has a explicit tail probability constant. See \cite{EnSaZi09} for related topics.
\end{remark}

For moments of negative orders we have

\begin{theorem}\label{nm}
Suppose that $\varphi'(1^-)<0$. Then for any $q\in (-\infty,0)$, $\mathbb{E}(Z^q)<\infty$ if and only if $q\in I_\nu$.
\end{theorem}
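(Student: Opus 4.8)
The statement to prove is Theorem~\ref{nm}: under $\varphi'(1^-)<0$, for $q<0$ one has $\mathbb{E}(Z^q)<\infty$ iff $q\in I_\nu$.

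The plan is to recast the statement as an estimate on the left tail $\mathbb{P}(Z<x)$ as $x\to 0$, exploiting $\mathbb{E}(Z^q)=|q|\int_0^\infty x^{q-1}\mathbb{P}(Z<x)\,\mathrm dx$ for $q<0$. The hypothesis $\varphi'(1^-)<0$ enters only through Theorem~\ref{nd}, which gives non-degeneracy, hence $Z>0$ and $Z^q<\infty$ almost surely, and $\mathbb{E}(Z)=1$; it plays no further role. Note also that, by \eqref{char}, $q\in I_\nu$ is equivalent to $\mathbb{E}(Q(B)^q)<\infty$ for one (equivalently any) $B\in\mathcal{B}_\lambda$ with $\lambda(B)>0$, in particular for $W=Q(V^{[0,1]}([0,1/2]))$; since $[0,1]\subset I_\nu$ and $I_\nu$ is an interval, $q\in I_\nu$ forces $[q,0]\subset I_\nu$.

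For the direction $q\in I_\nu\Rightarrow\mathbb{E}(Z^q)<\infty$, I would establish a bound of the form $\mathbb{P}(Z<x)\le C\,x^{p}$ for some $p>|q|$, which immediately gives finiteness of the $q$-th moment. The natural tool is \eqref{fek} with $k=2$, namely $Z=\tfrac12 W\|\mu^{I_0}\|+\tfrac12 W_1\|\mu^{I_1}\|$, together with the partial independences it carries: $W\perp\|\mu^{I_0}\|$, $W_1\perp\|\mu^{I_1}\|$, and $W\perp W_1$ because the cones $V^{[0,1]}([0,1/2])$ and $V^{[0,1]}([1/2,1])$ are disjoint. This would be organized as a Molchan/Liu-type functional inequality for $g(x)=\mathbb{P}(Z<x)$, iterated towards $x=0$. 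The difficulty is that \eqref{fek} is not a genuinely independent random difference equation ($\|\mu^{I_0}\|$, $\|\mu^{I_1}\|$ and $W_1$ are mutually dependent, the cones overlapping at high altitude), so I would run the argument after splitting $\Lambda=\Lambda_G+\Lambda_J$ into its Gaussian and pure-jump parts, whence $Q(B)=Q_G(B)Q_J(B)$: for the Gaussian layer $I_\nu=\mathbb{R}$ and all negative moments of the total mass are finite by \cite{RhVa11}; for the jump layer $q\in I_\nu$ gives $\mathbb{E}(Q_J(B)^q)<\infty$ and the compound-Poisson estimates of \cite{BaMa02} apply. One then recombines the two conditionally independent layers (for instance by Hölder applied to the approximating integrals $\int_0^1 Q_G(V^{[0,1]}_\epsilon(x))Q_J(V^{[0,1]}_\epsilon(x))\,\mathrm dx$ and passing to the limit, or by conditioning on one layer), the transfer to cascades built with other cones being legitimate as in \cite[Appendix~E]{BaMu03}.

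For the converse $\mathbb{E}(Z^q)<\infty\Rightarrow q\in I_\nu$, I would argue by contraposition: if $q\notin I_\nu$ then $\int_{(-\infty,-1]}e^{qx}\,\nu(\mathrm dx)=\infty$, so $\nu$ is heavy-tailed at $-\infty$, and I would produce a matching lower bound $\mathbb{P}(Z<x)\gtrsim x^{|q|}$, up to a slowly varying factor, forcing $\mathbb{E}(Z^q)=\infty$. The mechanism is that $\nu$ restricted to $(-\infty,-M]$ is a finite measure, so the corresponding big negative jumps of $\Lambda$ form a compound Poisson process; one isolates a cone whose weight multiplies a substantial portion of $\mu$ on $[0,1]$ and on which, with only polynomially decaying probability, such a jump is large enough to make that portion smaller than $x$, the complementary mass being $O(1)$; iterating this over a carefully chosen cascade of regions should yield the bound. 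I expect this to be the main obstacle: the exact-scaling cones $V^{[0,1]}(x)=V(x)\setminus V([0,1])$ deliberately remove the ``macroscopic'' cone $V([0,1])$, so that $\bigcap_{x\in[0,1]}V^{[0,1]}(x)=\emptyset$ and no single big jump can make $Z$ small; the cascade must be arranged so that the accumulated probabilistic cost still only produces a power of $x$ (a naive cascade over the dyadic tree, forcing every weight down, is far too lossy and yields only log-normal decay). The safe alternative is to transfer the required left-tail lower bound from a Bacry--Muzy cascade built with the original cones, where it follows from \cite{BaMa02}, via the equivalence of \cite[Appendix~E]{BaMu03}. Combining the two implications gives the stated equivalence.
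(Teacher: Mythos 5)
Your proposal does not close either direction, and in both cases the missing piece is precisely the device the paper uses. For the sufficiency ($q\in I_\nu\Rightarrow\mathbb{E}(Z^q)<\infty$), the Gaussian/jump splitting does not reduce to the cited literature: when $\nu$ has infinite mass the pure-jump layer is not a compound Poisson cascade in the sense of \cite{BaMa02}, so its negative moments are not available off the shelf, and the ``recombination'' step is not a routine application of H\"older --- for $q<0$ there is no inequality bounding $\bigl(\int Q_GQ_J\bigr)^q$ by separate functionals of the two layers, while conditioning on one layer produces a chaos with a random base measure to which neither \cite{RhVa11} nor \cite{BaMa02} applies as stated. (The paper itself notes this route would work only ``with some effort'', and that effort is exactly what is missing here.) The paper instead resolves the dependence problem you correctly identified in \eqref{fek}, but by a different device: it truncates the cones at height $|J|$ (passing to $\widetilde\mu^J$, with transfer of moments in both directions via the sup/inf bounds of Lemma~\ref{Ccq}) and uses the two \emph{separated} grandchildren $I_{00}$ and $I_{11}$ rather than the adjacent halves $I_0,I_1$, so that $\widetilde Z(I)\ge U_0\widetilde Z(I_{00})+U_1\widetilde Z(I_{11})$ holds with $\widetilde Z(I_{00})$, $\widetilde Z(I_{11})$ and $(U_0,U_1)$ genuinely independent; Molchan's theorem \cite[Theorem 4]{Mol96} then applies verbatim, and the only hypothesis to verify is $\mathbb{E}(U_0^q)<\infty$, which is exactly $q\in I_\nu$ combined with \eqref{bcq}.

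For the necessity, you flag the key obstruction yourself ($\bigcap_{t\in[0,1]}V^{[0,1]}(t)=\emptyset$, so no single big jump controls $Z$) and then fall back on transferring a left-tail lower bound from \cite{BaMa02} via \cite[Appendix E]{BaMu03}; but \cite{BaMa02} treats only compound Poisson cascades, so for a general infinitely divisible $\Lambda$ (infinite $\nu$, possible Gaussian part) the statement you want to import is essentially the statement being proved, and the proposal is circular at that point. Moreover no tail asymptotics are needed: the paper's necessity argument is a short moment computation. Cover $I$ by the overlapping intervals $J_0=\inf I+|I|[0,2/3]$ and $J_1=\inf I+|I|[1/3,1]$ with overlap $J$, bound $\widetilde Z(I)$ from above by $e^{\Lambda(V)}$ times a sum of independent factors $4^{-1}\sup_t e^{\Lambda(V_{i,l}(t))}\sup_t e^{\Lambda(V_{i,r}(t))}e^{\Lambda(V_i)}\widetilde Z(J_i)$, raise to the power $q<0$ (which reverses the inequality), and take expectations using the inf-moment bound \eqref{cq}: finiteness of $\mathbb{E}(\widetilde Z(I)^q)$ (which follows from $\mathbb{E}(Z^q)<\infty$) forces $\mathbb{E}(e^{q\Lambda(V\cup V_0)})<\infty$ for a region of positive finite $\lambda$-measure, hence $q\in I_\nu$ by \eqref{char}. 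So while your write-up correctly diagnoses the two difficulties (lack of independence in \eqref{fek}, absence of a common cone), it does not supply the truncation-plus-separation idea for the first direction nor the direct expectation argument for the second, and the literature it leans on does not cover the general infinitely divisible case.
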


For the Hausdorff and packing measures of the support of $\mu$ we have

\begin{theorem}\label{support}
Suppose that $\varphi'(1)<0$ and $\varphi''(1)>0$. For $b\in\mathbb{R}$ and $t>0$ let
\[
\psi_b(t)=t^{-\varphi'(1)} e^{b\sqrt{\log^+(1/t)\log^+\log^+\log^+(1/t)}}.
\]
Denote by $\mathcal{H}^{\psi_b}$ and $\mathcal{P}^{\psi_b}$ the Hausdorff and packing measures with respect to the gauge function $\psi_b$ (see \cite{Falconer03} for the definition). Then almost surely the measure $\mu$ is supported by a Borel set $K$ with
\[
\mathcal{H}^{\psi_b}(K)=\left\{\begin{array}{ll}
\infty, & \text{ if } b>\sqrt{2\varphi''(1)},\\
0, & \text{ if } b<\sqrt{2\varphi''(1)},
\end{array}
\right.
\]
and
\[
\mathcal{P}^{\psi_b}(K)=\left\{\begin{array}{ll}
\infty, & \text{ if } b>-\sqrt{2\varphi''(1)},\\
0, & \text{ if } b<-\sqrt{2\varphi''(1)}.
\end{array}
\right.
\]
\end{theorem}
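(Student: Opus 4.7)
The plan is to follow the Peyri\`ere size-biasing method used for canonical cascades, adapted to the present setting via iterated use of \eqref{scale23}. For $t\in[0,1]$ and $n\ge 1$, let $I_n(t)$ denote the dyadic interval of length $2^{-n}$ containing $t$. Since $\mu$ has no atoms almost surely, iterating \eqref{scale23} along the nested sequence $[0,1]=I_0(t)\supset I_1(t)\supset\cdots$ yields almost surely
$$
\mu(I_n(t)) = 2^{-n} \prod_{k=1}^n Q_k(t)\cdot \|\mu^{I_n(t)}\|,
\qquad Q_k(t)=Q\bigl(V^{I_{k-1}(t)}(I_k(t))\bigr).
$$
By Lemma~\ref{area}, each cone $V^{I_{k-1}(t)}(I_k(t))$ has $\lambda$-measure $\log 2$, and these cones are pairwise disjoint; so by the independent scattering of $\Lambda$, for fixed $t$ the random variables $(\log Q_k(t))_{k\ge 1}$ form an i.i.d.\ sequence with the common law of $\log W = \Lambda(V^{[0,1]}([0,1/2]))$.

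Next, I introduce the Peyri\`ere probability $\mathbb{Q}$ on $\Omega\times[0,1]$ by $d\mathbb{Q}(\omega,t)=d\mu^{\omega}(t)\,d\mathbb{P}(\omega)$, a probability measure since $\varphi'(1)<0$ gives $\mathbb{E}(Z)=1$ by Theorem~\ref{nd}. A direct computation using the displayed identity, the independence of the $Q_k(t)$ from $\|\mu^{I_n(t)}\|$, and $\mathbb{E}(Z)=1$ shows that under $\mathbb{Q}$ the sequence $(\log Q_k(t))_{k\ge 1}$ is again i.i.d., now with the $W$-biased law of $\log W$. Differentiating $\mathbb{E}(W^q)=2^{\varphi(q)+q-1}$ twice at $q=1$ gives
$$
E_{\mathbb{Q}}[\log Q_1] = (\varphi'(1)+1)\log 2,
\qquad \mathrm{Var}_{\mathbb{Q}}[\log Q_1] = \varphi''(1)\log 2 \in(0,\infty).
$$
Khintchine's law of the iterated logarithm applied to the i.i.d.\ sum $\sum_{k=1}^n \log Q_k(t)$ yields $\mathbb{Q}$-a.s., after substitution into the iterated identity and absorption of $\log \|\mu^{I_n(t)}\|$ as a lower-order term (see below),
$$
\limsup_{n\to\infty} \frac{\log\mu(I_n(t)) - n\varphi'(1)\log 2}{\sqrt{2\varphi''(1)\log 2\cdot n\log\log n}}=1,
$$
and the symmetric $-1$ for the liminf.

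Passing from cylinders $I_n(t)$ to balls $B(t,r)$ with $r\asymp 2^{-n}$ and using $n\log 2 = \log(1/r)$, this becomes, for $\mu$-a.e.\ $t$ and $\mathbb{P}$-a.s.,
$$
\limsup_{r\to 0}\frac{\log\mu(B(t,r))-\varphi'(1)\log(1/r)}{\sqrt{\log(1/r)\log\log\log(1/r)}}=\sqrt{2\varphi''(1)},
$$
and symmetrically with $-\sqrt{2\varphi''(1)}$ for the liminf. The Rogers--Taylor--Billingsley mass-distribution principle and its packing dual (see \cite{Falconer03}) then yield the claimed dichotomies. For $b>\sqrt{2\varphi''(1)}$, $\mu(B(t,r))/\psi_b(r)\to 0$ uniformly over the LIL limsup scale for $\mu$-a.e.\ $t$, forcing $\mathcal{H}^{\psi_b}(K)=\infty$ on every Borel $K$ carrying $\mu$; for $b<\sqrt{2\varphi''(1)}$, the limsup of this ratio is $+\infty$ $\mu$-a.e., which allows one to extract a Borel set $K$ carrying $\mu$ with $\mathcal{H}^{\psi_b}(K)=0$. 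The analogous liminf statement yields the packing transition at $b=-\sqrt{2\varphi''(1)}$.

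The main obstacle will be twofold. First, the residual factor $\|\mu^{I_n(t)}\|$ must satisfy $\log\|\mu^{I_n(t)}\|=o(\sqrt{n\log\log n})$ simultaneously over all $2^n$ dyadic cells of generation $n$, $\mathbb{P}$-a.s.: this is obtained from the $L^q$-bound $q>1$ for $Z$ provided by Theorem~\ref{pm} (applicable because $\varphi(1)=0$ and $\varphi'(1)<0$ with $\varphi''(1)<\infty$ give $\varphi(q)<0$ on a right neighborhood of $1$), together with a negative-order moment from Theorem~\ref{nm} (valid since $\varphi''(1)<\infty$ forces $I_\nu$ to contain a left neighborhood of $1$), combined with a Borel--Cantelli argument and a union bound over the $2^n$ cells. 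Second, the comparison between cylinders $I_n(t)$ and balls $B(t,r)$ with $r\asymp 2^{-n}$ must be tight at the LIL scale: any such $B(t,r)$ is sandwiched between a dyadic interval it contains and the union of at most two adjacent dyadic intervals covering it, and the log-weights of such adjacent intervals differ by the $\Lambda$-mass of a bounded region of $\mathbb{H}$, hence by $O(1)$ $\mathbb{P}$-a.s., which is negligible against $\sqrt{n\log\log n}$.
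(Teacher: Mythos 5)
Your overall architecture --- the multiplicative decomposition of $\mu(I_n(t))$ along nested dyadic cells, the Peyri\`ere measure $\mathbb{Q}$, the i.i.d.\ size-biased per-level weights with mean $\varphi'(1)\log 2$ (once the factor $2^{-1}$ is included) and variance $\varphi''(1)\log 2$, the LIL, and the transfer to Hausdorff and packing gauge measures through density theorems --- is exactly the route the paper takes (the paper then delegates the last step to Liu's argument). The genuine gap is in your treatment of the residual factor $\|\mu^{I_n(t)}\|$. You propose to prove $\log\|\mu^{I_n(t)}\|=o(\sqrt{n\log\log n})$ \emph{simultaneously over all $2^n$ cells of generation $n$}, $\mathbb{P}$-a.s., by a union bound using a positive moment from Theorem \ref{pm} and a negative moment from Theorem \ref{nm}. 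This fails on two counts. First, Theorem \ref{nm} gives $\mathbb{E}(Z^{q})<\infty$ for $q<0$ only when $q\in I_\nu$, and the hypotheses of Theorem \ref{support} ($\varphi'(1)<0$, $\varphi''(1)>0$) do not force $I_\nu$ to contain any negative number; your justification that $\varphi''(1)<\infty$ gives a left neighbourhood of $1$ is beside the point, since $[0,1]\subset I_\nu$ is a standing assumption and what you would need is $I_\nu\cap(-\infty,0)\neq\emptyset$, a genuine restriction on the left tail of $\nu$. Second, even granting a negative moment of some fixed order $q>0$, the union bound yields $2^n e^{-q\epsilon\sqrt{n\log\log n}}\,\mathbb{E}(Z^{-q})$, which diverges, so Borel--Cantelli cannot close the argument; in fact the uniform statement is false in general, because the smallest residual over the $2^n$ cells of generation $n$ is typically exponentially small in $n$, not $e^{-o(\sqrt{n\log\log n})}$.

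The repair is that no uniformity over cells is needed: the LIL conclusion you want is a $\mathbb{Q}$-a.s.\ statement, so it suffices to control $R_n=\|\mu^{I_n(t)}\|$ for $\mathbb{Q}$-a.e.\ $(\omega,t)$, and under the size-biased measure the troublesome lower tail is automatic, $\mathbb{Q}(R_n\le e^{-\epsilon\sqrt{n}})=\mathbb{E}\bigl(Z\mathbf{1}_{\{Z\le e^{-\epsilon\sqrt{n}}\}}\bigr)\le e^{-\epsilon\sqrt{n}}$, summable with no negative-moment hypothesis at all, while the upper tail only requires $\mathbb{E}(Z(\log Z)^2)<\infty$, which follows from Theorem \ref{pm} because $\varphi'(1)<0$ gives $\varphi(q)<0$ for some $q>1$. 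This is precisely the paper's Lemma \ref{Zeps}. With that substitution (and with the cylinder-to-ball comparison handled by comparability of net and Hausdorff/packing measures for the doubling gauge $\psi_b$, rather than your claim that adjacent dyadic intervals carry log-weights differing by an a.s.\ $O(1)$ $\Lambda$-mass, which is not uniform over all pairs and generations), your argument coincides with the paper's proof.
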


\subsection{Connection with Bacry and Muzy's construction}\label{connection} We may use other shapes for the cone $V$ to define $V(t)=V+t$, for example the one used in \cite{BaMu03} to derive the exact scaling property described in the introduction. The advantage of the present form is that it naturally yields \eqref{scale23} and \eqref{fek}, hence the exact scaling \eqref{exsc}, with $\Omega_\lambda= \Lambda \big (V^{[0,T]}([0,\lambda T])\big )$ if $\mu=\mu^{[0,T]}$. Indeed, for a fixed interval $I$, the measure $\mu^{I}$ has the same law as the restriction to $[0,T]$ of the measure defined from the cone $V^T$ used in \cite{BaMu03} for $T=|I|$, which is drawn on the picture (Figure \ref{Fig2}); this follows from an elementary geometric comparison between the two kinds of cones and the horizontal stationarity of $\Lambda$; otherwise, one can mimic the proof of \cite[Lemma 1]{BaMu03} to get the joint distribution of the $\Lambda$ measures of any finite family of cones  the $\big (V^{[0,T]}_\epsilon(t_1), \ldots,V^{[0,T]}_\epsilon(t_q)\big )$ and find it coincides with the one obtained with the cones $\big (V^{T}_\epsilon(t_1), \ldots,V^{T}_\epsilon(t_q)\big )$.  
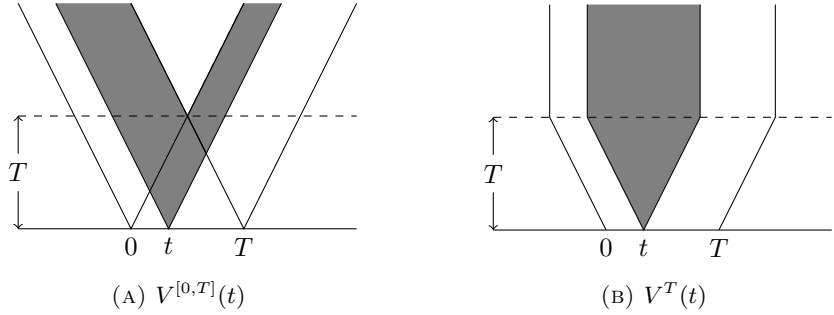
\begin{figure}[ht]
\begin{subfigure}[b]{0.49\textwidth}
\centering
\begin{tikzpicture}[xscale=1.5,yscale=1.5]
\draw (-1,0) -- (2,0);
\draw [fill=gray] (1,2) -- (1/2,1) -- (2/3,2/3) --(4/3,2); 
\draw [fill=gray] (-2/3,2) -- (1/3,0) -- (2/3,2/3) --(0,2); 
\draw (-1,2) -- (0,0) -- (1,2);
\draw (0,2) -- (1,0) -- (2,2);
\draw [dashed] (-1,1) -- (2,1);
\node [below] at (0,0) {$0$};
\node [below] at (1,0) {$T$};
\node [below] at (1/3,0) {$t$};
\draw [->] (-1,2/3) -- (-1,1);
\draw [->] (-1,1/3) -- (-1,0);
\node at (-1,1/2) {$T$};
\end{tikzpicture}
\caption{$V^{[0,T]}(t)$}\label{fig2a}
\end{subfigure}
\begin{subfigure}[b]{0.49\textwidth}
\centering
\begin{tikzpicture}[xscale=1.5,yscale=1.5]
\draw (-1,0) -- (2,0);
\draw [fill=gray] (-1/6,2) -- (-1/6,1) -- (1/3,0) -- (5/6,1) -- (5/6,2); 
\draw (0,0) -- (-1/2,1) -- (-1/2,2);
\draw (1,0) -- (3/2,1) -- (3/2, 2);
\draw [dashed] (-1,1) -- (2,1);
\node [below] at (0,0) {$0$};
\node [below] at (1,0) {$T$};
\node [below] at (1/3,0) {$t$};
\draw [->] (-1,2/3) -- (-1,1);
\draw [->] (-1,1/3) -- (-1,0);
\node at (-1,1/2) {$T$};
\end{tikzpicture}
\caption{$V^T(t)$}\label{fig2b}
\end{subfigure}
\caption{The gray areas for the corresponding sets.}\label{Fig2}
\end{figure}

Using the cones of Figure \ref{fig2b} yields a measure on $\mathbb R_+$, by considering the vague limit of $Q(V_\epsilon^T(t)) \, \mathrm dt$, whose indefinite integral increments are stationary. However, there is no long range dependence between the increments of the indefinite integral of this measure, since two cones have no intersection when associated to points away from each other by at least $T$.  Notice that this measure can also be viewed as the juxtaposition of the limits of $(Q(V_\epsilon^T(t)) \,\mathrm dt)_{|[nT,(n+1)T]}$, $n\in\mathbb N$.  Similarly, consider the measure $\mu$ over $\mathbb R_+$ obtained by juxtaposing the limits of   $(Q(V_\epsilon^{[nT,(n+1)T)}(t))\,\mathrm dt )_{|[nT,(n+1)T]}$. Then, only the process $\mu([nT,(n+1)])_{n\in\mathbb N}$ is stationary, but it has long range dependence: in case of non-degeneracy, if we assume that $\psi(-i2)<\infty$, a calculation shows that 
$$
\mathrm{cov}(\mu([0,T]),\mu([nT,(n+1)])\sim_{n\to\infty} \frac{2\psi(-i2)T^2}{3n},
$$  
so the series $\sum_{n\ge 0}\mathrm{cov}(\mu([0,T]),\mu([nT,(n+1)])$ diverges.

\section{Preliminaries}

Let $\Sigma=\{0,1\}^{\mathbb{N}_+}$ be the dyadic symbolic space. For $\mathbf{i}=i_1i_2\cdots \in\Sigma$ and $n\ge 1$ define $\mathbf{i}|_n=i_1\cdots i_n$. Let $\rho$ be the standard metric on $\Sigma$, that is
\[
\rho(\mathbf{i},\mathbf{j})=2^{-\inf \{n\ge 1: \mathbf{i}|_n=\mathbf{j}|_n\}}, \ \mathbf{i},\mathbf{j}\in\Sigma.
\]
Then $(\Sigma,\rho)$ forms a compact metric space. Denote by $\mathcal{B}$ its Borel $\sigma$-algebra.

For $\mathbf{i}=i_1i_2\cdots \in\Sigma$ define
\[
\pi(\mathbf{i})=\sum_{j=1}^\infty i_j 2^{-j}.
\]
Then $\pi$ is a continuous map from $\Sigma$ to $[0,1]$.

For $n\ge 1$ let $\Sigma_n=\{0,1\}^n$, and use the convention that $\Sigma_0=\{\emptyset\}$.

For $n\ge 0$ and  $i=i_1\cdots i_n \in \Sigma_n$ define
\[
[i]=\{\mathbf{i}\in \Sigma: \mathbf{i}|_n=i\} \text{ and } I_i=\overline{\pi([i])},
\]
with the convention that $i_1\cdots i_0=\emptyset$, $[\emptyset]=\Sigma$ and $I_\emptyset =[0,1]$.

Denote by $\Sigma_*=\cup_{n\ge 0} \Sigma_n$. For $i\in\Sigma_*$ define
\[
W_i=Q(\Lambda(V^I(I_i))) \text{ and } Z_i= \|{\mu}^{I_i}\|.
\]
Then from \eqref{fek} we have for any $n\ge 1$,
\begin{equation}\label{fek2}
2^n Z=\sum_{i\in \Sigma_n} W_i Z_i,
\end{equation}
where $\{W_i,i\in\Sigma_n\}$ have the same law, $\{Z_i,i\in\Sigma_n\}$ have the same law as $Z$ and for each $i\in\Sigma_n$, $W_i$ and $Z_i$ are independent.

\section{Proof of Theorem \ref{nd}}

\subsection{} First we prove (i) $\Leftrightarrow$ (ii) and the $L^1$ convergence. Clearly (i) implies (ii). We suppose that $\mathbb{E}(Z)=c>0$. For any positive finite Borel measure $m$ on $I$ and $t>0$ define
\[
m_t(f)=\frac{1}{|I|}\int_I f(x) \cdot Q(V_{1/t}^I(x)) \, m(\mathrm dx), \ f\in C_0(I).
\]
Following the same argument as in Lemma \ref{Ka}, $m_t$ is a measure-valued right-continuous martingale, thus the Kahane operator $EQ$:
\[
EQ(m)=\mathbb{E}\left(\lim_{t\to \infty} m_t\right)
\]
is well-defined. Denote by $\ell$ the Lebesgue measure restricted to $[0,1]$. Then we have $EQ(\ell)=c\ell$ since $\mathbb{E}(\lim_{t\to\infty}\ell_t(J))=c\ell(J)$ for any compact subinterval $J\subset I$. From \cite{Ka87a} we know that $EQ$ is a projection, so $EQ(EQ(\ell))=EQ(\ell)$. This gives $c=c^2$, hence $c=1$. Consequently, since the limit of the positive martingale $\|\mu^I_{1/t}\|$ with expectation 1 has expectation 1 as well, the convergence also holds in $L^1$ norm.

\subsection{}\label{nda} Now we prove that (ii) implies (iii). From \eqref{fek2} we have that
\begin{equation}\label{2Z}
2Z=W_0Z_0+W_1Z_1.
\end{equation}
Assume that $\mathbb{E}(Z)>0$. For $0<q<1$ the function $x\mapsto x^q$ is sub-additive, hence \eqref{2Z} yields
\begin{equation}\label{suba}
2^q\mathbb{E}(Z^q)\le \mathbb{E}(W_0^qZ_0^q)+\mathbb{E}(W_1^qZ_1^q)=2\mathbb{E}(W_0^q)\mathbb{E}(Z^q).
\end{equation}
Since $\mathbb{E}(Z)>0$ implies $\mathbb{E}(Z^q)>0$, we get from \eqref{suba}, \eqref{char} and Lemma \ref{area} that
\[
2^q \le 2\mathbb{E}(W_0^q)=2e^{\psi(-iq)\log 2}=2^{\psi(-iq)+1}.
\]
This implies $\varphi \le 0$ on interval $[0,1]$, and it follows that $\varphi'(1^-)\le 0$.  To prove $\varphi'(1^-)<0$ we need the following lemma.

\begin{lemma}\label{X01}
Let $X_i=W_iZ_i$ for $i=0,1$.  There exists $\epsilon>0$ such that 
\[
\mathbb{E}(X_0^q \mathbf{1}_{\{X_0\le X_1\}}) \ge \epsilon \mathbb{E}(X_0^q) \ \text{ for } 0\le q \le 1.
\]
\end{lemma}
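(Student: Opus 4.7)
The plan is to exploit the reflection symmetry of the construction about the midpoint of $I=[0,1]$. The isometry $\sigma: x+iy\mapsto (1-x)+iy$ on $\mathbb{H}$ preserves the intensity $\lambda$, so $\sigma_*\Lambda \stackrel{d}{=}\Lambda$; moreover $\sigma$ maps $V^I(I_0)\leftrightarrow V^I(I_1)$ and swaps the cone families $(V^{I_0}(x))_{x\in I_0}$ and $(V^{I_1}(x))_{x\in I_1}$. Evaluating the functionals $W_j=Q(V^I(I_j))$ and $Z_j=\|\mu^{I_j}\|$ on $\sigma_*\Lambda$ therefore swaps the indices, giving
\[
(W_0,W_1,Z_0,Z_1)\stackrel{d}{=}(W_1,W_0,Z_1,Z_0);
\]
in particular the pair $(X_0,X_1)$ is exchangeable.

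Exchangeability yields $\mathbb{E}(X_0^q\mathbf{1}_{\{X_0\le X_1\}})=\mathbb{E}(X_1^q\mathbf{1}_{\{X_1\le X_0\}})$, and combined with the elementary pointwise bound $X_0^q\mathbf{1}_{\{X_0\le X_1\}}+X_1^q\mathbf{1}_{\{X_1\le X_0\}}\ge (X_0\wedge X_1)^q$ (checked case by case), this gives
\[
2\,\mathbb{E}\bigl(X_0^q\mathbf{1}_{\{X_0\le X_1\}}\bigr)\ge \mathbb{E}\bigl((X_0\wedge X_1)^q\bigr).
\]
It thus suffices to show that $h(q):=\mathbb{E}((X_0\wedge X_1)^q)/\mathbb{E}(X_0^q)$ has a positive lower bound on $[0,1]$.

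Pointwise positivity of $h$ is obtained as follows. By \eqref{2Z}, $\{Z>0\}=\{X_0>0\}\cup\{X_1>0\}$, while exchangeability together with $Z_j\stackrel{d}{=}Z$ gives $\mathbb{P}(X_0>0)=\mathbb{P}(X_1>0)=\mathbb{P}(Z>0)$. Inclusion--exclusion then yields $\mathbb{P}(X_0\wedge X_1>0)=\mathbb{P}(Z>0)>0$ (positivity from the standing hypothesis $\mathbb{E}(Z)>0$), so $\mathbb{E}((X_0\wedge X_1)^q)>0$. On the other hand Jensen gives $\mathbb{E}(X_0^q)\le 1$, and the elementary inequality $y^q\ge \min(y,1)$ for $y\ge 0$, $q\in[0,1]$, gives $\mathbb{E}(X_0^q)\ge \mathbb{E}(\min(X_0,1))>0$. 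Continuity of both numerator and denominator on $[0,1]$ then follows by dominated convergence with dominating function $1+X_0\in L^1$, so $h$ is continuous and strictly positive on the compact interval $[0,1]$, and hence attains a positive minimum $2\epsilon$.

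The main obstacle is the rigorous justification of the exchangeability of $(X_0,X_1)$: one must verify that the limit measures $\mu^{I_0}$ and $\mu^{I_1}$, defined via Lemma~\ref{Ka} as weak limits of the measure-valued martingales $(\mu^{I_j}_{1/t})_t$, genuinely transform into one another in law under $\sigma$. The cleanest route is first to check the equality in law at the level of the approximating martingales for fixed cutoff $\epsilon$, using the joint distributional invariance $\sigma_*\Lambda\stackrel{d}{=}\Lambda$ over finite families of Borel sets, and then to pass to the weak limit exploiting the c\`adl\`ag construction in Lemma~\ref{Ka}. Everything else is a short symmetry--plus--compactness argument.
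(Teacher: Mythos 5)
Your proof is correct, and it takes a genuinely different route from the paper on the key point. The paper argues by contradiction: if $\mathbb{E}(X_0^q\mathbf{1}_{\{X_0\le X_1\}})=0$ for some $q\in(0,1]$, then (by the same symmetry you establish) almost surely at most one of $X_0,X_1$ is positive, whence $2^q\mathbb{E}(Z^q)=2\mathbb{E}(W_0^q)\mathbb{E}(Z^q)$, so $\psi(-iq)=q-1$ on $[0,1]$, forcing $\sigma=0$ and $\nu\equiv0$, contradicting the standing non-triviality assumption; the uniform $\epsilon$ then comes from continuity and compactness exactly as in your last step. You instead prove positivity directly: the pointwise bound $2X_0^q\mathbf{1}_{\{X_0\le X_1\}}+2X_1^q\mathbf{1}_{\{X_1\le X_0\}}\ge 2(X_0\wedge X_1)^q$ together with exchangeability reduces everything to $\mathbb{P}(X_0\wedge X_1>0)>0$, which you get from the neat inclusion--exclusion identity $\mathbb{P}(X_0\wedge X_1>0)=\mathbb{P}(Z>0)$ based on \eqref{2Z}, $W_j>0$ a.s., and $Z_j\overset{\text{law}}{=}Z$. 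This buys something: your argument never uses the L\'evy structure of $\psi$ (no need to rule out $\sigma=\nu=0$), only the standing hypothesis $\mathbb{E}(Z)>0$ from Section 3.2 (which the paper's proof also needs implicitly, to divide by $\mathbb{E}(Z^q)$), and it even yields an explicit lower bound for $\epsilon$ in terms of $\mathbb{P}(Z>0)$; the paper's version is shorter given that the non-triviality of $(\sigma,\nu)$ is available anyway. Note that both proofs rest on the exchangeability of $(X_0,X_1)$, which the paper invokes as ``symmetry'' without comment; your justification via the reflection $x+iy\mapsto(1-x)+iy$, which preserves $\lambda$ and swaps the two cone systems up to $\lambda$-null boundary sets (harmless since $\Lambda$ vanishes a.s. on $\lambda$-null sets), is the right one — translation by $1/2$ would not work since it does not preserve $V^I(\cdot)$. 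Two trivial points to tidy: at $q=0$ the map $q\mapsto\mathbb{E}(X_0^q)$ may be discontinuous depending on the $0^0$ convention, so either treat $q=0$ separately (exchangeability gives $\mathbb{P}(X_0\le X_1)\ge 1/2$) or work with the continuous extension on $(0,1]$; and the claim $\mathbb{P}(X_0>0)=\mathbb{P}(Z>0)$ uses $W_0>0$ a.s., which is worth saying explicitly.
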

\begin{proof}
If $\mathbb{E}(X_0^q \mathbf{1}_{\{X_0\le X_0\}})$ is strictly positive for all $q\in [0,1]$, then it is easy to get the conclusion, since both expectations, as functions of $q$, are continuous on $[0,1]$.

Suppose that there exists $q\in(0,1]$ such that $\mathbb{E}(X_0^q \mathbf{1}_{\{X_0\le X_1\}})=0$, then almost surely either $X_0>X_1$ or $0=X_0\le X_1$. Due to the symmetry of $X_0$ and $X_1$ this actually implies that almost surely either $X_0=X_1=0$, or $X_0=0,X_1>0$, or $X_1=0,X_0>0$. This yields
\[
2^q\mathbb{E}(Z^q)=\mathbb{E}(X_0^q)+\mathbb{E}(X_1^q)=2\mathbb{E}(W_0^q)\mathbb{E}(Z^q) \ \text{ for } 0\le q\le 1.
\]
So we have $\psi(-iq)=q-1$ for $q\in[0,1]$. Then from $\frac{\partial^2}{\partial q^2}\psi(-iq)=0$ we get that $\sigma^2=0$ and $\nu\equiv 0$, which is a contradiction to our assumption.
\end{proof}

Now as shown in \cite{KaPe76}, by applying the inequality $(x+y)^q\le x^q +qy^q$ for $x\ge y>0$ and $0<q<1$ we get from \eqref{2Z} and Lemma \ref{X01} that
\[
2^q\mathbb{E}(Z^q) \le 2\mathbb{E}(W_0^q)\mathbb{E}(Z^q) -(1-q)\epsilon\mathbb{E}(W_0^q) \mathbb{E}(Z^q).
\]
This implies
\[
\varphi(q) +\log \left(1-\frac{(1-q)\epsilon}{2}\right)\ge 0 \text{ on } [0,1].
\]
Then it follows that $\varphi'(1^-)-(\epsilon/2\log 2)\le 0$, thus $\varphi'(1^-)<0$.

\subsection{}\label{ndb} Finally we prove that (iii) implies (ii). Assume that $\varphi'(1^-)<0$. For $i\in\Sigma_*$ and $n\ge 1$ define
\[
Y_{n,i}= \mu_{2^{-n}}^I(I_i).
\]
Also denote by $Y_n=\mu_{2^{-n}}^I(I)$. Then for any $m\ge 1$ and $n\ge m+1$ we have
\begin{equation}\label{Zn}
Y_n=\sum_{i\in \Sigma_m} Y_{n,i}.
\end{equation}
We need the following lemma from \cite{KaPe76}.
\begin{lemma}\label{K}
There exists a constant $q_0\in(0,1)$ such that for any $q\in(q_0,1)$ and any finite sequence $x_1,\cdots, x_{k}>0$,
\[
\Big(\sum_{i=1,\cdots, k}x_i\Big)^q \ge \sum_{i=1,\cdots, k} x_i^q -(1-q)\sum_{i\neq j} (x_ix_j)^{q/2}.
\]
\end{lemma}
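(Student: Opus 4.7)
The plan is to reduce the general $k$-term inequality to the binary case $k=2$ by induction on $k$, and then establish the binary case by a Taylor expansion around $q=1$ whose main analytic input is the comparison $H(p)\leq 2\sqrt{p(1-p)}$ between binary entropy and a square root.

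For the induction step, assume the inequality holds for $k$ terms and consider $x_1,\ldots,x_{k+1}>0$ with $S=x_1+\cdots+x_k$. Applying the $k=2$ inequality to the pair $(S,x_{k+1})$ and then the inductive hypothesis to $(x_1,\ldots,x_k)$ yields
\[
\Bigl(\sum_{i=1}^{k+1} x_i\Bigr)^q \geq \sum_{i=1}^{k+1} x_i^q - (1-q)\sum_{\substack{i\neq j\\ i,j\leq k}}(x_ix_j)^{q/2} - 2(1-q)(S\,x_{k+1})^{q/2},
\]
and one closes the induction by the observation $(S\,x_{k+1})^{q/2}\leq \sum_{i\leq k}(x_ix_{k+1})^{q/2}$, i.e.\ the subadditivity of $t\mapsto t^{q/2}$ on $\mathbb{R}_+$ (valid since $q/2\leq 1$). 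This threshold $q_0$ will not depend on $k$.

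For the $k=2$ case, by scale invariance we set $a+b=1$, $a=p$, and consider
\[
\tilde F(p,q)=1-p^q-(1-p)^q+2(1-q)\bigl(p(1-p)\bigr)^{q/2}.
\]
Then $\tilde F(p,1)\equiv 0$, $\tilde F(0,q)=\tilde F(1,q)=0$, and a direct computation gives $\partial_q\tilde F(p,1)=H(p)-2\sqrt{p(1-p)}$ where $H(p)=-p\log p-(1-p)\log(1-p)$. The pointwise inequality $H(p)\leq 2\sqrt{p(1-p)}$ on $[0,1]$ is the crux; it follows by squaring $p^{1/2}+(1-p)^{1/2}\geq 1+H(p)/2$, itself an application of the convexity of $q\mapsto p^q+(1-p)^q$ using its value $1$ and derivative $-H(p)$ at $q=1$.

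Given $\partial_q\tilde F(p,1)\leq 0$, a Taylor expansion at $q=1$ reads
\[
\tilde F(p,q)=(1-q)\bigl[2\sqrt{p(1-p)}-H(p)\bigr]+\frac{(1-q)^2}{2}\,\partial_q^2\tilde F(p,\xi),\quad\xi\in(q,1),
\]
whose leading term is nonnegative. The main obstacle is uniformity in $p$: the quadratic remainder contains $\log^2 p$ and blows up as $p\to 0$ or $1$, exactly where the leading term also degenerates. I would resolve this by a direct asymptotic expansion near the endpoints; for instance as $p\to 0$ one finds $\tilde F(p,q)\sim 2(1-q)\sqrt{p}+(1-q)^2\sqrt{p}\,|\log p|$, both terms positive (and similarly near $p=1$ by the $p\leftrightarrow 1-p$ symmetry). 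Combined with continuity and the strict positivity of the leading term on any compact subinterval of $(0,1)$, this produces a threshold $q_0<1$ such that $\tilde F\geq 0$ on $[0,1]\times(q_0,1)$, which closes the argument.
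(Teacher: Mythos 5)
Your reduction to the binary case is correct and clean: the induction on $k$ only uses the subadditivity of $t\mapsto t^{q/2}$, and the threshold it produces is indeed independent of $k$; note also that the paper itself does not prove this lemma but quotes it from Kahane and Peyri\`ere \cite{KaPe76}, so your argument has to stand entirely on its own. The interior part of your binary analysis is also fine: $\partial_q\tilde F(p,1)=H(p)-2\sqrt{p(1-p)}\le 0$, and your convexity derivation of $H(p)\le 2\sqrt{p(1-p)}$ (via $\sqrt p+\sqrt{1-p}\ge 1+H(p)/2$) is valid; on a fixed compact subinterval $[\delta,1-\delta]$ the second-order remainder is uniformly $O((1-q)^2)$, so positivity there for $q$ close to $1$ (depending on $\delta$) follows.

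The genuine gap is in the endpoint step, which is exactly where the lemma's uniformity lives. First, the asymptotic you assert, $\tilde F(p,q)\sim 2(1-q)\sqrt{p}+(1-q)^2\sqrt{p}\,|\log p|$, is not correct: for fixed $q<1$ one has $\tilde F(p,q)\sim 2(1-q)p^{q/2}$ as $p\to 0$, and replacing $p^{q/2}$ by $\sqrt p\,(1+\tfrac{1-q}{2}\log(1/p))$ is legitimate only in the regime $(1-q)\log(1/p)\to 0$; since $p$ ranges over all of $(0,1)$ for every $q$, the regime where $p$ is exponentially small in $1/(1-q)$ must also be covered, and there the expansion in powers of $1-q$ breaks down. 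Your formula moreover drops the competing negative contribution $-p^q+qp$, which is precisely what the positive term has to beat when $p$ is only moderately small. Second, and more structurally, the combination ``positivity as $p\to0$ for each $q$'' plus ``positivity on compacts for $q$ near $1$'' does not by itself produce a single $q_0$: the endpoint neighbourhood must be chosen independently of $q$, otherwise the compact interval, and with it the admissible range of $q$, moves with $q$. What is needed, and what your sketch does not supply, is a statement of the form: there exist $\delta>0$ and $q_0<1$ such that $\tilde F(p,q)\ge 0$ for all $p\in(0,\delta]$ and all $q\in(q_0,1)$. This can be repaired along your lines, e.g.\ by using $1-(1-p)^q\ge qp$ to reduce the endpoint region to the inequality $2(1-q)(p(1-p))^{q/2}+qp\ge p^q$, and then splitting according to whether $(1-q)\log(1/p)$ is below or above a fixed small constant: in the first regime the deficit $p^q-qp$ is of order $(1-q)\,p\,(1+\log(1/p))$ and is dominated by $2(1-q)\sqrt{p(1-p)}$ once $\delta$ is small, while in the second regime $(1-q)p^{-q/2}\ge(1-q)e^{c/(1-q)}$ is already larger than $1$ for $q$ close to $1$. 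As written, however, the endpoint analysis does not close, and it is the step the whole lemma hinges on.
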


Applying Lemma \ref{K} to \eqref{Zn} we get for any $q\in(q_0,1)$,
\[
Y_n^q \ge \sum_{i\in \Sigma_m} Y_{n,i}^q-(1-q)\sum_{i\neq j \in \Sigma_m} Y_{n,i}^{q/2}Y_{n,j}^{q/2}.
\]
Taking expectation from both side we get
\begin{equation}\label{bq}
\mathbb{E}(Y_n^q) \ge \sum_{i\in\Sigma_m} \mathbb{E}(Y_{i,n}^q)-(1-q)\sum_{i\neq j \in \Sigma_m} \mathbb{E}(Y_{n,i}^{q/2}Y_{n,j}^{q/2}).
\end{equation}
Let
\begin{eqnarray*}
\mathcal{J}_1&=&\{(i,j)\in \Sigma_m^2: \mathrm{dist}(I_i,I_j)=0\}\\
\mathcal{J}_2&=&\{(i,j)\in \Sigma_m^2: \mathrm{dist}(I_i,I_j)\ge 2^{-m}\}.
\end{eqnarray*}
It is easy to check that $\#\mathcal{J}_1=2(2^m-1)$ and $\#\mathcal{J}_2=(2^m-1)(2^m-2)$. Then by using H\"older's inequality we get
\begin{eqnarray}
\sum_{i\neq j \in \Sigma_m} \mathbb{E}(Y_{n,i}^{q/2}Y_{n,j}^{q/2})&=&\sum_{(i,j)\in\mathcal{J}_1}\mathbb{E}(Y_{n,i}^{q/2}Y_{n,j}^{q/2})+\sum_{(i,j)\in\mathcal{J}_2} \mathbb{E}(Y_{n,i}^{q/2}Y_{n,j}^{q/2})\nonumber \\
&\le& 2(2^m-1)\mathbb{E}(Y_{n,\bar{0}}^q)+\sum_{(i,j)\in\mathcal{J}_2}\mathbb{E}(Y_{n,i}^{q/2}Y_{n,j}^{q/2}),\label{b-1}
\end{eqnarray}
where we denote by $\bar{0}=0\cdots 0\in \Sigma_m$. We need the following lemma:
\begin{lemma}\label{mmon}
There exists a constant $C$ such that for any $(i,j)\in \mathcal{J}_2$ and $q\in(0,1)$,
\[
\mathbb{E}(Y_{n,i}^{q/2}Y_{n,j}^{q/2}) \le C\cdot 2^{(1+\varphi(q))m} \cdot\mathbb{E}\big(\mu^{I_{\bar{0}}}_{2^{-n}}(I_{\bar{0}})^{q/2}\big)^2.
\]
\end{lemma}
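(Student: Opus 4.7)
The plan is to start from the scaling decomposition \eqref{scale22}. For each $i\in\Sigma_m$ and $x\in I_i$, the identity $Q(V^I_{2^{-n}}(x))=W_i\,Q(V^{I_i}_{2^{-n}}(x))$, with $W_i:=Q(V^I(I_i))$ independent of the process $\{Q(V^{I_i}_{2^{-n}}(x))\}_{x\in I_i}$, integrates to $Y_{n,i}=W_i\cdot 2^{-m}\cdot y_n^{(i)}$, where $y_n^{(i)}:=\mu^{I_i}_{2^{-n}}(I_i)$ has the same law as $y_n:=\mu^{I_{\bar 0}}_{2^{-n}}(I_{\bar 0})$ by Lemma~\ref{copy}. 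Hence
\[
\mathbb E\bigl(Y_{n,i}^{q/2}Y_{n,j}^{q/2}\bigr)=2^{-qm}\,\mathbb E\bigl(W_i^{q/2}W_j^{q/2}(y_n^{(i)}y_n^{(j)})^{q/2}\bigr).
\]

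For the $W$-factor I would use Lemma~\ref{area}: for $(i,j)\in\mathcal J_2$, one has $V^I(I_i)\cap V^I(I_j)=V^I([\inf I_i,\sup I_j])$, whose $\lambda$-measure equals $\log(1/\ell)$ with $\ell:=\sup I_j-\inf I_i\ge 3\cdot 2^{-m}$. Splitting $V^I(I_i)$ and $V^I(I_j)$ into their three pairwise-disjoint pieces and using \eqref{char} with the independence of $\Lambda$ on disjoint sets gives
\[
\mathbb E\bigl(W_i^{q/2}W_j^{q/2}\bigr)=2^{2\psi(-iq/2)m}\,\ell^{-\Delta},\qquad \Delta:=\psi(-iq)-2\psi(-iq/2)\ge 0
\]
by convexity of $q\mapsto\psi(-iq)$; together with $\ell\ge 3\cdot 2^{-m}$ this yields $\mathbb E(W_i^{q/2}W_j^{q/2})\le C\cdot 2^{\psi(-iq)m}$.

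The delicate step is to produce the factor $\mathbb E(y_n^{q/2})^2$. The key observation is that for $(i,j)\in\mathcal J_2$ the cones $V(x)$, $x\in I_i$ and $V(y)$, $y\in I_j$ are disjoint at heights below $2^{-m}$, because the horizontal extent of $V(x)$ at height $v<2^{-m}$ lies in $[x-v/2,x+v/2]$ and $\mathrm{dist}(I_i,I_j)\ge 2^{-m}$. Decomposing $Q(V^{I_i}_{2^{-n}}(x))=\alpha_i(x)\beta_{i,n}(x)$ with $\alpha_i(x):=Q(V^{I_i}(x)\cap\mathbb H_{2^{-m}})$ (the ``wings'', measurable w.r.t.\ $\mathcal F:=\sigma(\Lambda|_{\mathbb H_{2^{-m}}})$) and $\beta_{i,n}(x):=Q(V(x)\cap\{2^{-n}\le\mathrm{Im}\,z<2^{-m}\})$ (independent of $\mathcal F$), the processes $(\beta_{i,n})$ and $(\beta_{j,n})$ become mutually independent and independent of $\mathcal F$, while $W_i,W_j\in\mathcal F$. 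Conditioning on $\mathcal F$ therefore gives
\[
\mathbb E\bigl(W_i^{q/2}W_j^{q/2}(y_n^{(i)}y_n^{(j)})^{q/2}\bigr)=\mathbb E\Bigl(W_i^{q/2}W_j^{q/2}\,\mathbb E[(y_n^{(i)})^{q/2}\mid\mathcal F]\,\mathbb E[(y_n^{(j)})^{q/2}\mid\mathcal F]\Bigr).
\]

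The hard part will be bounding $\mathbb E[(y_n^{(i)})^{q/2}\mid\mathcal F]$ so as to isolate a factor of $\mathbb E(y_n^{q/2})$ (giving the square after multiplying over $i,j$) while leaving an $\mathcal F$-measurable envelope whose joint expectation with $W_i^{q/2}W_j^{q/2}$ produces the extra $2^{2m}$ needed to promote the exponent from $\varphi(q)-1$ (as in the fully independent case) to $1+\varphi(q)$. A crude conditional Jensen inequality gives only $\mathbb E[(y_n^{(i)})^{q/2}\mid\mathcal F]\le (2^mM_i)^{q/2}$ with $M_i:=\int_{I_i}\alpha_i(x)\,dx$, which destroys the $n$-dependence. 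Instead one must rescale the below-$2^{-m}$ contribution to a unit cascade via Lemma~\ref{copy} and the scale invariance of $\lambda$, treating the ``wings'' $\alpha_i$ as an $\mathcal F$-measurable weight; this produces the factor $\mathbb E(y_n^{q/2})$ per index. The residual $\mathcal F$-measurable quantity is then of the form $M_i^{q/2}M_j^{q/2}W_i^{q/2}W_j^{q/2}$, whose expectation is computed using \eqref{char} applied to the enlarged cones $V^I(I_i)\cup(V^{I_i}(x)\cap\mathbb H_{2^{-m}})$, each having $\lambda$-measure $m\log 2+1$ by Lemma~\ref{area} and direct calculation; combined with the $W$-overlap estimate above this delivers the announced bound.
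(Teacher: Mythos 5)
Your setup is sound and is essentially the same scale separation the paper uses: the factorization $Y_{n,i}=2^{-m}W_i\,y_n^{(i)}$, the identity $V^I(I_i)\cap V^I(I_j)=V^I([\inf I_i,\sup I_j])$ with the resulting bound $\mathbb E(W_i^{q/2}W_j^{q/2})\le C\,2^{m\psi(-iq)}$ (correct, and cleaner than the paper's H\"older step), and the observation that for $(i,j)\in\mathcal J_2$ the sub-$2^{-m}$ parts of the two cascades live on disjoint regions, so that conditioning on $\mathcal F=\sigma(\Lambda|_{\mathbb H_{2^{-m}}})$ factorizes the conditional expectations. But the step you yourself flag as ``the hard part'' is exactly the content of the lemma, and the sketch you give for it does not work. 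To keep a factor $\mathbb E(y_n^{q/2})$ per index you must bound $\mathbb E[(y_n^{(i)})^{q/2}\mid\mathcal F]$ by an $\mathcal F$-measurable envelope times $\mathbb E(y_n^{q/2})$. The only mechanism in your sketch that could produce the residual $M_i^{q/2}$, $M_i=\int_{I_i}\alpha_i$, is conditional Jensen with the normalized weight $\alpha_i/M_i$, and that replaces the sub-cascade factor by $1$, i.e. it loses precisely the factor $\mathbb E(y_n^{q/2})$ you need (as you noted yourself); there is no inequality that simultaneously retains $\mathbb E(y_n^{q/2})$ and leaves $M_i^{q/2}$ as the envelope, because the weight $\alpha_i(x)$ sits inside the integral against the sub-cascade and $q/2<1$ is a fractional moment. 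Moreover, even granting such a residual, you could not evaluate $\mathbb E\bigl(M_i^{q/2}M_j^{q/2}W_i^{q/2}W_j^{q/2}\bigr)$ ``using \eqref{char} applied to the enlarged cones'': \eqref{char} computes moments of $e^{\Lambda(B)}$ for a fixed set $B$, whereas $M_i$ is an integral over $x$ of such exponentials; knowing that the enlarged cone has $\lambda$-measure $m\log2+1$ gives nothing here.

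What is missing is a maximal/minimal moment control of the wing processes, which is how the paper closes exactly this gap (its Lemma \ref{Ccq}): $\mathbb E\bigl(\sup_{t\in J}e^{q\Lambda(V^{J,s}(t))}\bigr)\le C_q$, proved by Doob's inequality applied to the exponential of the L\'evy-type process $t\mapsto\Lambda(V^{J,s}(t))$, and $\mathbb E\bigl(\inf_{t\in J}e^{q\Lambda(V^{J,s}(t))}\bigr)\ge c_q>0$. With the sup bound one dominates the whole above-$2^{-m}$ contribution (the wings together with $W_l$) by $\sup_{t\in I_l}e^{\Lambda(V^I_{2^{-m}}(t))}$, whose $q$-th moment is at most $C_q^2\,2^{m\psi(-iq)}$, and H\"older decouples the two sups; with the inf bound one converts the moment of the truncated mass $2^m\int_{I_l}\beta_{l,n}\,dx$ back into $\mathbb E\bigl(\mu^{I_l}_{2^{-n}}(I_l)^{q/2}\bigr)$ up to the constant $c_q^{-1}$ --- note that $2^m\int_{I_l}\beta_{l,n}\,dx$ is \emph{not} equal in law to $\mu^{I_l}_{2^{-n}}(I_l)$ (the latter contains the wings), so this conversion is genuinely needed. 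Until you supply these two estimates, or an equivalent, the proof is incomplete; once you do, your route yields the exponent $\varphi(q)-1$, which implies the stated bound $1+\varphi(q)$.
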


This gives
\[
\sum_{(i,j)\in\mathcal{J}_2} \mathbb{E}(Y_{n,i}^{q/2}Y_{n,j}^{q/2}) \le (2^m-1)(2^m-2)\cdot C \cdot 2^{(1+\varphi(q))m} \cdot \mathbb{E}\big(\mu^{I_{\bar{0}}}_{2^{-n}}(I_{\bar{0}})^{q/2}\big)^2.
\]
First notice that $\mu^{I_{\bar{0}}}_{2^{-n}}(I_{\bar{0}})$ has the same law as $Y_{n-m}$. Then combing \eqref{bq} and \eqref{b-1}, and using the fact that $ \mathbb{E}(Y_n^q)\le \mathbb{E}(Y_{n-m}^q)\le 1 $ we get
\[
\mathbb{E}(Y_n^q)\frac{1-e^{-\varphi(q)m\log 2}}{1-q} \le 2+ C(2^m-1)\mathbb{E}(Y_{n-m}^{q/2})^2.
\]
By letting $q\to 1^-$ we obtain
\[
-\varphi'(1^-)m\log 2 \le 2+C(2^m-1) \mathbb{E}(Y_{n-m}^{1/2})^2.
\]
Choose $m$ large enough so that $\varphi'(1^-)m\log 2+2<0$, we get $\inf_{n\ge 1} \mathbb{E}(Y_n^{1/2})>0$. Consequently $\mathbb{E}(Z^{1/2})>0$, thus $\mathbb{E}(Z)>0$. \qed

\subsection{Proof of Lemma \ref{mmon}}\label{pmmon}

The proof can be deduced from \cite[Lemma 3, p. 495-496]{BaMu03}. For reader's convenience we present one here. Write
\[
V_{2^{-n}}^I(t)=V_{2^{-m}}^I(t) \cup V^{m}_{n}(t),
\]
where $V^{m}_{n}(t)=V_{2^{-n}}^I(t)\setminus V_{2^{-m}}^I(t)$. Define the random measure
\[
\mu^m_n(t)=\frac{1}{|I|}\cdot Q(V^{m}_{n}(t))\, dt, \ \ t\in I.
\]
Then for $i\in\Sigma_m$ we have
\[
\mu_{2^{-n}}^I(I_i) \le \left(\sup_{t\in I_i} e^{\Lambda(V_{2^{-m}}^I(t))}\right) \mu^m_n(I_i).
\]
Notice that for $(i,j)\in \mathcal{J}_2$, $\mu^m_n(I_i)$ and $\mu^m_n(I_j)$ are independent, and they are independent of $\sup_{t\in I_i} e^{\Lambda(V_{2^{-m}}^I(t))}$ and $\sup_{t\in I_j} e^{\Lambda(V_{2^{-m}}^I(t))}$. Thus
\begin{eqnarray}
\mathbb{E}(Y_{n,i}^{q/2}Y_{n,j}^{q/2})&\le&\mathbb{E}\left( \prod_{l=i,j}  \sup_{t\in I_l} e^{q\Lambda(V_{2^{-m}}^I(t))/2} \cdot \mu^m_n(I_l)^{q/2}\right)\nonumber\\
&=&\prod_{l=i,j} \mathbb{E}\left( \mu^m_n(I_l)^{q/2}\right) \cdot \mathbb{E}\left( \prod_{l=i,j} \sup_{t\in I_l} e^{q\Lambda(V_{2^{-m}}^I(t))/2}\right)\nonumber\\
&\le&\prod_{l=i,j} \mathbb{E}\left( \mu^m_n(I_l)^{q/2}\right) \cdot\prod_{l=i,j} \mathbb{E}\left( \prod_{l=i,j} \sup_{t\in I_l} e^{q\Lambda(V_{2^{-m}}^I(t))}\right)^{1/2}, \label{mude}
\end{eqnarray}
where the last inequality comes from H\"older's inequality.

Take $J\in\{I_i,I_j\}$ with $J=[t_0,t_1]$. For $t\in J$ we can divide $V_{2^{-m}}^I(t)$ into three disjoint parts:
\begin{equation}\label{vid}
V_{2^{-m}}^I(t)=V^I(J)\cup V^{J,l}(t) \cup V^{J,r}(t),
\end{equation}
where 
\begin{eqnarray*}
V^{J,l}(t)&=&\left\{z=x+iy\in V(t): 2^{-m} \le y < 2(t_1-x)\right\},\\
V^{J,r}(t)&=&\left\{z=x+iy\in V(t): 2^{-m} \le  y \le 2(x-t_0)\right\}.
\end{eqnarray*}
We need the following lemma.

\begin{lemma}\label{Ccq}
Let $s\in\{l,r\}$. For $q\in I_\nu$ there exists constant $C_q<\infty$ such that
\[
\mathbb{E}\left(\sup_{t\in J}e^{q\Lambda(V^{J,s}(t))} \right) \le C_q;
\]
For $q\in\mathbb{R}$ there exists constant $c_q>0$ such that
\[
\mathbb{E}\left(\inf _{t\in J} e^{q\Lambda(V^{J,s}(t))}\right) \ge c_q.
\]
\end{lemma}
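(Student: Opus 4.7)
The strategy is to identify, for each $s \in \{l, r\}$, the process $(\Lambda(V^{J,s}(t)))_{t \in J}$ with a standard L\'evy process of characteristic exponent $\psi$ evaluated over the unit time interval $[0, 1]$, and then to apply Doob's $L^p$ maximal inequality for the sup bound and path regularity of L\'evy processes for the inf bound.

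The key geometric observation exploits the equality $|J| = 2^{-m}$ which is in force in the setting where the lemma is used. Passing to the skew coordinates $u = x + y/2$ and $v = x - y/2$ (both with unit Jacobian, so that $\lambda = y^{-2}\, dx\, dy$ is preserved), the cone $V(t)$ reads $\{t < u \le t + y\}$ and equivalently $\{t - y < v \le t\}$. For $t \in J = [t_0, t_1]$, the slanted face $y \ge u - t$ (resp.\ $y \ge t - v$) of $V(t)$ is automatically implied by $y \ge 2^{-m}$ once $u < t_1$ (resp.\ $v \ge t_0$), because $u - t < t_1 - t \le |J| = 2^{-m}$. Consequently the sets collapse to rectangles
\[
V^{J,l}(t) = \{(u, y): t < u < t_1,\ y \ge 2^{-m}\}, \qquad V^{J,r}(t) = \{(v, y): t_0 \le v \le t,\ y \ge 2^{-m}\},
\]
which are monotone in $t$ (decreasing and increasing respectively). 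Since a strip $\{a < u \le a + h,\ y \ge 2^{-m}\}$ has $\lambda$-measure $h \cdot 2^m$, independent scattering of $\Lambda$ together with horizontal homogeneity yields that $\tau \in [0, 1] \mapsto \Lambda(\{t_0 < v \le t_0 + \tau 2^{-m},\ y \ge 2^{-m}\})$ is a L\'evy process with exponent $\psi$, and the analogue holds in the $u$-variable. Taking a time-reversal where appropriate (still a L\'evy process with the same exponent), we conclude
\[
\sup_{t \in J} e^{q\Lambda(V^{J,s}(t))} \overset{\text{law}}{=} \sup_{\tau \in [0, 1]} e^{qL_\tau}, \qquad \inf_{t \in J} e^{q\Lambda(V^{J,s}(t))} \overset{\text{law}}{=} \inf_{\tau \in [0, 1]} e^{qL_\tau},
\]
where $L$ is a L\'evy process with exponent $\psi$, and these laws are independent of $m$ and $J$.

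For the sup bound, assuming $q$ lies in the interior of $I_\nu$, pick $p > 1$ such that $pq \in I_\nu$. The non-negative martingale $M_\tau = e^{qL_\tau - \psi(-iq)\tau}$ satisfies $\mathbb{E}[M_1^p] = e^{\psi(-ipq) - p\psi(-iq)} < \infty$, and Doob's $L^p$ maximal inequality gives $\mathbb{E}[(\sup_\tau M_\tau)^p] \le (p/(p-1))^p \mathbb{E}[M_1^p]$. Since $\sup_{\tau \in [0, 1]} e^{qL_\tau} \le e^{|\psi(-iq)|} \sup_\tau M_\tau$, this produces the universal constant $C_q < \infty$. For the inf bound, $L$ has almost surely c\`adl\`ag (hence almost surely bounded) paths on $[0, 1]$, so there exists $M > 0$ with $\mathbb{P}[\sup_{\tau \in [0, 1]} |L_\tau| \le M] \ge 1/2$; on this event $e^{qL_\tau} \ge e^{-|q|M}$, so $\mathbb{E}[\inf_\tau e^{qL_\tau}] \ge e^{-|q|M}/2 =: c_q > 0$.

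The substantive step, and the one I expect to be the hardest, is the geometric reduction: recognizing that precisely when $|J| = 2^{-m}$ the cones $V^{J,s}(t)$ reduce to axis-aligned rectangles in appropriate skew coordinates, which in turn exposes the hidden L\'evy-process structure of $(\Lambda(V^{J,s}(t)))_{t \in J}$. Once this identification is in place the remaining arguments (Doob's maximal inequality and L\'evy path boundedness) are standard, and the resulting constants $C_q$ and $c_q$ are manifestly independent of $m$ and $J$.
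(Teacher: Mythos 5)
Your geometric reduction is correct and actually makes explicit what the paper only asserts: because the truncation height equals $|J|$, in the coordinates $u=x+y/2$, $v=x-y/2$ the constraint $v\le t$ (resp.\ $u>t$) in $V^{J,l}(t)$ (resp.\ $V^{J,r}(t)$) becomes redundant (up to a $\lambda$-null set), so $t\mapsto \Lambda(V^{J,s}(t))$ is, after affine reparametrization and possibly time reversal, a L\'evy process with exponent $\psi$ run over a unit time interval, with law independent of $J$ and $m$. This is precisely the structure the paper's proof rests on, and your lower bound via c\`adl\`ag (hence a.s.\ bounded) paths is the same argument as the paper's.

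There is, however, a genuine gap in the upper bound: you establish $\mathbb{E}\bigl(\sup_{t\in J}e^{q\Lambda(V^{J,s}(t))}\bigr)\le C_q$ only for $q$ in the \emph{interior} of $I_\nu$, by choosing $p>1$ with $pq\in I_\nu$, whereas the lemma claims a finite $C_q$ for every $q\in I_\nu$. The standing hypothesis is only $[0,1]\subseteq I_\nu$, so the values of $q$ that are actually used may be endpoints of $I_\nu$: the proof of Lemma \ref{mmon} ends by taking $C=C_1^2c_1^{-2}$, and $1$ may well be the right endpoint of $I_\nu$; likewise Theorem \ref{nm} uses $C_q$ for $q\in I_\nu\cap(-\infty,0)$, possibly at an attained left endpoint. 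At such a $q$ no admissible $p>1$ exists and your Doob $L^p$ step fails. The repair is the paper's half-exponent device: either compensate, writing $M_t=\Lambda(V^{J,s}(t))-\mathbb{E}\Lambda(V^{J,s}(t))$, and apply Doob's $L^2$ maximal inequality to the positive submartingale $e^{qM_t/2}$ (convexity of $x\mapsto e^{qx/2}$), which gives $\mathbb{E}\bigl(\sup_t e^{qM_t}\bigr)\le 4\sup_t\mathbb{E}\bigl(e^{qM_t}\bigr)$ and needs only $q\in I_\nu$; or, staying within your formulation, apply Doob's $L^2$ inequality to the nonnegative martingale $N_\tau=e^{qL_\tau/2-\psi(-iq/2)\tau}$ (note $q/2\in I_\nu$ since $I_\nu$ is an interval containing $0$), so that $\mathbb{E}\bigl(\sup_\tau N_\tau^2\bigr)\le 4\,\mathbb{E}\bigl(N_1^2\bigr)=4e^{\psi(-iq)-2\psi(-iq/2)}$ and hence $\mathbb{E}\bigl(\sup_\tau e^{qL_\tau}\bigr)\le 4e^{2|\psi(-iq/2)|+|\psi(-iq)-2\psi(-iq/2)|}$. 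With that one change your proof covers all $q\in I_\nu$ and coincides in substance with the paper's.
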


By using Lemma \ref{Ccq} we get from \eqref{vid} that for $q\in I_\nu\cap (0,\infty)$,
\begin{equation}\label{qvid}
\mathbb{E}\left( \sup_{t\in J} e^{q\Lambda(V^{I}_{2^{-m}}(t))}\right) \le C_q^2 \cdot \mathbb{E}(e^{q\Lambda(V^{I}(J))})=C_q^2 \cdot 2^{m\psi(-iq)}.
\end{equation}
Also notice that for $t\in J$ we have
\[
V^{m}_n(t)\cup V^{J,l}(t) \cup V^{J,r}(t) =V^{I}_{2^{-n}}(t).
\]
So for any $q'\in \mathbb{R}$ we have
\[
\mu^{J}_{2^{-n}}(J)^{q'} \ge \mu^m_n(J)^{q'} \cdot \left(\inf_{t\in J} e^{q'\Lambda(V^{J,l}(t))} \right) \left(\inf_{t\in J} e^{q'\Lambda(V^{J,r}(t))}\right).
\]
Applying Lemma \ref{Ccq}  we get that
\begin{equation}
\mathbb{E}\left( \mu^m_n(J)^{q/2}\right) \le c_q^{-2}\cdot 2^{-mq/2}\cdot  \mathbb{E}\left(\mu^{J}_{2^{-n}}(J)^{q/2}\right).
\end{equation}
Together with \eqref{mude} and \eqref{qvid} this implies
\[
\mathbb{E}(Y_{n,i}^{q/2}Y_{n,j}^{q/2}) \le C_q^2 c_q^{-2} \cdot 2^{m(1+\varphi(q))}  \cdot \prod_{l=i,j}\mathbb{E}\left(\mu^{I_l}_{2^{-n}}(I_l)^{q/2}\right).
\]
From the prove of Lemma \ref{Ccq} one can chose $C_q c_q^{-1}$ as a increasing function of $q$, and since $1\in I_\nu$, we get the conclusion by taking $C=C_1^2c_1^{-2}$. \qed

\subsubsection{Proof of Lemma \ref{Ccq}}
First let $q\in I_\nu$. We have
\[
\mathbb{E}(\Lambda(V^{J,r}(t)))=a\lambda(V^{J,r}(t)).
\]
From the fact that $\lambda(V^{J,r}(t))=(t-t_0)/|J|$ we get
\[
e^{q\Lambda(V^{J,r}(t))} \le e^{|aq|} \cdot e^{qM_t},
\]
where $M_t=\Lambda(V^{J,r}(t))-a (t-t_1)/|J|$ is a martingale. As $x\mapsto e^{xq/2}$ is convex we have that $e^{qM_t/2}$ is a positive submartingale. Due to Doob's $L^2$-inequality we get
\[
\mathbb{E}\left(\sup_{t\in J}e^{qM_t} \right) \le 4 \sup_{t\in J} \mathbb{E} (e^{qM_t}) \le 4e^{|aq|+|\psi(-iq)|}.
\]
This implies
\[
\mathbb{E}\left(\sup_{t\in J} e^{q\Lambda(V^{J,r}(t))} \right) \le C_q,
\]
where the constant $C_{q}$ only depends on $q$.

Now let $q\in\mathbb{R}$. Notice that
\[
[0,1]\ni t\mapsto \Lambda(V^{J,r}(t_0+(t_1-t_0)t))
\]
is a L\'evy process restricted on $[0,1]$, thus for $X_q=\inf _{t\in J} e^{q\Lambda(V^{J,r}(t))}$ we must have
\[
\mathbb{P}\{X_q > \epsilon_q\}>0
\]
for some $1>\epsilon_q>0$, otherwise this would  contradict the fact that almost surely the sample path of a L\'evy process is c\`adl\`ag. Then
\[
\mathbb{E}\left(\inf _{t\in J} e^{q\Lambda(V^{J,r}(t))}\right) \ge \mathbb{P}\{X_q > \epsilon_q\} \cdot \epsilon_q>0.
\]
The argument for $V^{J,l}(t)$ is the same.\qed

\section{Proof of Theorem \ref{pm}}

We only need to prove that for $q>1$, $0<\mathbb{E}(Z^q)<\infty$ implies that $q\in I_\nu$ and $\varphi(q)<0$, the rest of the result comes from \cite[Lemma 3]{BaMu03}.

Because the function $x^q$ is super-additive, one has
\[
2^qZ^q\ge W_0^qZ_0^q+W_1^qZ_1^q,
\]
and the strict inequality holds if and only if $W_0Z_0=W_1Z_1$. So if $W_0Z_0\neq W_1Z_1$ with positive probability, then
\[
2^q\mathbb{E}(Z^q) >2\mathbb{E}(W_0^q)\mathbb{E}(Z^q),
\]
that is $\mathbb{E}(W_0^q)<2^{q-1}$, which implies that $q\in I_\nu$ and $\varphi(q)<0$. Otherwise $W_0Z_0= W_1Z_1$ almost surely, thus $\varphi(q)=q-1$ for all $q\in I_\nu$. This yields that $\sigma^2=0$ and $\nu\equiv 0$, which is in contradiction to our assumption.

\section{Proof of Theorem \ref{eo}}

\subsection{Proof of (1)} According to Theorem \ref{pm}, $(\alpha)$ implies that $I_\nu\supset [0,\infty)$ and $\varphi(q)<0$ for all $q>1$. Recall that $\varphi(q)=\psi(-iq)-q+1$ and
\[
\psi(-iq)=aq+\frac{1}{2}\sigma^2q^2+ \int_{\mathbb{R}} \bigl(e^{q x}-1-q x \mathbf{1}_{|x|\le1}\bigr)  \nu(\mathrm{d}x).
\]
Suppose that $\nu([\epsilon,\infty))>0$ for some $\epsilon>0$, then one can find constant $c_1,c_2>0$ such that
\[
\psi(-iq)\ge c_1 e^{q\epsilon}-c_2q
\]
as $q\to \infty$, which is in contradiction to $\varphi(q)<0$ for all $q>1$. It is also easy to see that $\varphi(q)<0$ for all $q>1$ implies $\sigma=0$. Thus using the expression of the normalizing constant $a$ (see \eqref{a}) we may write
\begin{equation}\label{vpp'}
\varphi(q)=1-q +\int_{-\infty}^0 \big(e^{qx}-1+q(1-e^x)\big) \, \nu(\mathrm dx).
\end{equation}
It is easy to check that the integral term in \eqref{vpp'} is non-negative, and goes to $\infty$ faster than any multiple of $q$ if $\int_{-\infty}^01\wedge |x|\, \nu(dx)=\infty$, in which case we cannot have $\varphi(q)<0$ for all $q>1$. If $\int_{-\infty}^01\wedge |x|\, \nu(dx)<\infty$, then 
\begin{equation}\label{vpp}
\varphi(q)=(\gamma-1) q+1-\int_{-\infty}^0 \big(1-e^{qx}\big) \, \nu(\mathrm dx),
\end{equation}
where
\[
\gamma=\int_{-\infty}^0 \big(1-e^x\big) \, \nu(\mathrm dx).
\]
Clearly $\varphi(q)<0$ for all $q>1$ implies that $\gamma-1 \le 0$.

Conversely, if $(\beta)$ holds, then $I_\nu\supset [0,\infty)$, since $\nu$ is carried by $(-\infty,0]$ thus $\int_{|x|>1} e^{qx} \nu(\mathrm dx)<\infty$ for any $q>0$. We may write $\varphi(q)$ as in \eqref{vpp}. If $\gamma<1$, then $\lim_{q\to \infty} \varphi(q)=-\infty$ since $\varphi(q)\sim (\gamma-1)q$ at $\infty$.  If $\gamma=1$, then
\[
\int_{-\infty}^0 \big(1-e^{qx}\big) \, \nu(dx)> \int_{-\infty}^0 \big(1-e^{x}\big) \, \nu(\mathrm dx)=\gamma=1
\]
for any $q>1$. Due to the convexity of $\varphi$, it follows that in both cases  $\varphi'(1)<0$ and $\varphi(q)<0$ for all $q>1$, hence we get $(\alpha)$ from Theorem \ref{nd} and Theorem \ref{pm}.

\subsection{Proof of (2)}\label{p5.2} The proof is inspired by the approach used by  Kahane in \cite{KaPe76} for canonical cascades. However, here again the correlations between $Z_0$ and $Z_1$ creates complications. For the sharp upper bound of ${\displaystyle \limsup_{n\to \infty}}\frac{\log \mathbb{E}(Z^n)}{n\log n}$, we use a new approach consisting in writing an explicit formula for the moments of positive integer orders of $Z$  and then estimate them from above by using Dirichlet's multiple integral formula. For the lower bound of ${\displaystyle \liminf_{n\to \infty}}\frac{\log \mathbb{E}(Z^n)}{n\log n}$, we first show that under $(\beta)$ the inequality $\mathbb{E}(\mu(I_0)^k\mu(I_1)^l)\ge \mathbb{E}(\mu(I_0)^k)\mathbb{E}(\mu(I_1)^l)$ holds for any non negative integers $k$ and $l$, and then follow \cite{KaPe76}. 

\medskip

From $(\beta)$ we have that for $q\ge 0$,
\[
\psi(-iq)=\gamma \cdot q-\int_{-\infty}^0 \big(1-e^{qx}\big) \, \nu(\mathrm dx).
\]
We have almost surely
\begin{eqnarray*}
\mu(I)^n&=&\lim_{\epsilon\to 0} \mu_\epsilon(I)^n\\
&=& \lim_{\epsilon\to 0}\left(\int_{t\in I} e^{\Lambda(V_\epsilon^I(t))} \mathrm dt\right)^n.
\end{eqnarray*}
Thus we get from the martingale convergence theorem, Fubini's theorem and dominated convergence theorem that
\[
\mathbb{E}(\mu(I)^n) = \int_{t_1,\cdots, t_{n}\in I} \lim_{\epsilon\to\infty} \mathbb{E}\left( \prod_{j=1}^{n} e^{\Lambda(V_\epsilon^I(t_j))} \right) \mathrm dt_1\cdots \mathrm d t_{n}.
\]
For integers $ k\le j$ define
\begin{eqnarray*}
\alpha(j,k)&=&\psi(-i(j-k+1))+\psi(-i((j-1)-(k+1)+1))\\
&&-\psi(-i((j-1)-k+1))-\psi(-i(j-(k+1)+1))\\
&=&\int_{-\infty}^0 e^{(j-k-1)x}(1-e^x)^2 \, \nu(\mathrm dx).
\end{eqnarray*}
Fix $0<t_1<\cdots <t_n <1$.  Then for $\epsilon$ small enough one gets from \cite[Lemma 1]{BaMu03} that
\[
\log \mathbb{E}\Big(\prod_{j=1}^{n} e^{\Lambda(V_\epsilon^I(t_j))} \Big)=\sum_{k=1}^{n-1}\sum_{j=k+1}^n \alpha(j,k)\cdot \log \frac{1}{t_j-t_k}.
\]
This gives
\[
\mathbb{E}(\mu(I)^n)= n! I_n,
\]
where
\[
I_n=\int_{0<t_1<\cdots<t_n<1} \prod_{k=1}^{n-1}\prod_{j=k+1}^{n} (t_j-t_k)^{-\alpha(j,k)} \mathrm dt_1\cdots \mathrm dt_n.
\]
Let us  use the change of variables $x_1=t_1$ and $x_k=t_k-t_{k-1}$ for $k=2,\cdots,n$. Then $I_n$ becomes
\[
I_n=\int_{x_1+\cdots+x_n\le 1} \prod_{k=1}^{n-1}\prod_{j=k+1}^n\Big(\sum_{l=k+1}^j x_l\Big)^{-\alpha(j,k)} \mathrm dx_1\cdots \mathrm dx_n.
\]
For every integer $l$ define
\[
\gamma_l=\int_{-\infty}^0 e^{lx}(1-e^x)^2 \, \nu(\mathrm dx)
\]
so that
\[
\alpha(j,k)=\gamma_{j-k-1}.
\]
Then we have
\[
 \prod_{k=1}^{n-1}\prod_{j=k+1}^n\Big(\sum_{l=k+1}^j x_l\Big)^{-\alpha(j,k)}=\prod_{l=1}^{n-1} \Big(\prod_{k=1}^{n-l}\big(\sum_{j=k+1}^{k+l}x_j\big)\Big)^{-\gamma_{l-1}}.
\]
Since $x_j\in(0,1)$, it is easy to deduce that for $l=1,\cdots,n-1$,
\[
\prod_{k=1}^{n-l}\big(\sum_{j=k+1}^{k+l}x_j\big) \ge \prod_{j=2}^{n} x_j.
\]
This implies
\[
I_n\le \int_{x_1+\cdots+x_n\le 1} \Big(\prod_{j=2}^{n}x_j\Big)^{-\sum_{l=1}^{n-1} \gamma_{l-1}}\, \mathrm dx_1\cdots \mathrm dx_n.
\]
Notice that
\[
\sum_{l=1}^{n-1} \gamma_{l-1}=\int_{-\infty}^0 (1-e^{(n-1)x})(1-e^x) \, \nu(\mathrm dx):=\gamma'_{n-1}.
\]
Then we get from Dirichlet's multiple integral formula that
\begin{eqnarray*}
&&\int_{x_1+\cdots+x_n\le 1} \Big(\prod_{j=2}^{n}x_j\Big)^{-\gamma'_{n-1}} \, \mathrm dx_1\cdots \mathrm dx_n\\
&=&\int_{x_2+\cdots+x_n\le 1} \Big(1-\sum_{j=2}^nx_j\Big)\cdot \Big(\prod_{j=2}^{n}x_j\Big)^{-\gamma'_{n-1}} \, \mathrm dx_2\cdots \mathrm dx_n\\
&=&\frac{\Gamma(1-\gamma_{n-1}')^{n-1}\Gamma(2)}{\Gamma\big ((n-1)(1-\gamma_{n-1}')+2\big )}.
\end{eqnarray*}
Since $\gamma'_n\to \gamma$ as $n\to \infty$, by applying Stirling's formula we finally get
\[
\limsup_{n\to \infty} \frac{\log \mathbb{E}(Z^n)}{n\log n} \le 1-(1-\gamma)=\gamma.
\]

On the other hand, we have
\begin{equation}\label{binom}
\mu(I)^n=(\mu(I_0)+\mu(I_1))^n=\sum_{m=0}^n \frac{n!}{m!(n-m)!}\mu(I_0)^m\mu(I_1)^{n-m}.
\end{equation}
For $1\le m\le n-1$ we have
\[
\begin{aligned}
\mathbb{E}(\mu(I_0)^m&\mu(I_1)^{n-m})=m!(n-m)!\\
&\int_{0<t_1<\cdots<t_m<1/2<t_{m+1}<\cdots<t_n<1} \prod_{k=1}^{n-1}\prod_{j=k+1}^{n} (t_j-t_k)^{-\alpha(j,k)} \, \mathrm dt_1\cdots \mathrm dt_n.
\end{aligned}
\]
Also
\begin{eqnarray*}
\prod_{k=1}^{n-1}\prod_{j=k+1}^{n} (t_j-t_k)^{-\alpha(j,k)}&=&\prod_{k=1}^{m-1}\prod_{j=k+1}^{m}\prod_{k=1}^{m}\prod_{j=m+1}^{n}\prod_{k=m+1}^{n-1}\prod_{j=k+1}^{n} (t_j-t_k)^{-\alpha(j,k)}\\
&\ge&\prod_{k=1}^{m-1}\prod_{j=k+1}^{m}\prod_{k=m+1}^{n-1}\prod_{j=k+1}^{n} (t_j-t_k)^{-\alpha(j,k)},
\end{eqnarray*}
where the inequality uses the fact that $t_j-t_k\le 1$ and $\alpha(j,k)\ge 0$. This implies that
\[
\mathbb{E}(\mu(I_0)^m\mu(I_1)^{n-m})\ge \mathbb{E}(\mu(I_0)^m)\mathbb{E}(\mu(I_1)^{n-m}).
\]
Notice that
\[
\mathbb{E}(\mu(I_0)^m)=2^{-m}\mathbb{E}(W_0^m)\mathbb{E}(Z^m)=2^{-m}2^{\psi(-im)}\mathbb{E}(Z^m).
\]
Since
\[
\psi(-im)=\gamma m-\int_{-\infty}^0 \big(1-e^{mx}\big) \, \nu(\mathrm dx),
\]
for any $\epsilon>0$ there exists $c>0$ such that for all $m\ge 0$ we have 
$$
\psi(-im)\ge (\gamma -\epsilon) m +\log (c),
$$
and using \eqref{binom}
\begin{eqnarray*}
\mathbb{E}(Z^n) &\ge& c^2 2^{(\gamma-\epsilon) n} \sum_{m=0}^n \frac{n!}{m!(n-m)!}2^{-n}\mathbb{E}(Z^m)\mathbb{E}(Z^{n-m})\\
&\ge& c^2 2^{(\gamma-\epsilon) n}  \mathbb{E}(Z^{n/2})^2.
\end{eqnarray*}
Hence
\[
\log \mathbb{E}(Z^{2n})\ge 2\log (c)+ (\gamma-\epsilon) 2n\log2+2\log \mathbb{E}(Z^{n}).
\]
Consequently, 
\begin{eqnarray*}
\frac{\log \mathbb{E}(Z^{2^n})}{2^n} &\ge&\frac{2\log (c)}{2^n} +(\gamma-\epsilon)  \log2+\frac{\log \mathbb{E}(Z^{2^{n-1}})}{2^{n-1}}\\
&\ge& n(\gamma-\epsilon) \log 2 + 2(1-2^{-n})\log (c).
\end{eqnarray*}
This easily yields
\[
\liminf_{n\to \infty}\frac{\log \mathbb{E}(Z^n)}{n\log n}\ge \gamma-\epsilon,
\]
for any $\epsilon>0$.

\section{Proof of Theorem \ref{tail}}

\subsection{Reduction to a key proposition}\label{heart of the proof}
In the case of limits of canonical cascades,  Guivarc'h \cite{Gu90} exploited  \eqref{FEC} to connect our problem to a random difference equation one; then Liu \cite{Liu00}  extended this idea for the case of supercritical Galton-Watson trees, and for this he used explicitly Peyri\`ere's measure. This is our starting point, the difference being that now we must exploit the more delicate equation \eqref{fek}.

Recall that $\pi(\mathbf{i})=\sum_{j=1}^\infty i_j 2^{-j}$ is a continuous map from $\Sigma$ to $[0,1)$. We shall use the same notation $\mu$ for the pull-back measure $\mu\circ \pi^{-1}$ on $\Sigma$. Let $\Omega'=\Omega\times \Sigma$ be the product space, let $\mathcal{F}'=\mathcal{F}\times \mathcal{B}$ be the product $\sigma$-algebra, and let $\mathbb{Q}$ be the Peyri\`ere measure on $(\Omega', \mathcal{F}')$, defined as
\[
\mathbb{Q}(E)=\mathbb{E} \left(\int_\Sigma \mathbf{1}_E(\omega, \mathbf{i}) \, \mu(\mathrm d\mathbf{i}) \right), \ \ E\in \mathcal{F}'.
\]
Then $(\Omega',\mathcal{F}',\mathbb{Q})$ forms a probability space.

For $\omega\in\Omega$ and $\mathbf{i}\in\Sigma$ let
\begin{eqnarray*}
A(\omega,\mathbf{i}) &=& \sum_{i\in\{0,1\}} 2^{-1}W_i(\omega)\cdot \mathbf{1}_{\{\mathbf{i}|_1=i\}},\\
B(\omega,\mathbf{i}) &=& \sum_{i\in\{0,1\}} 2^{-1} W_i(\omega) Z_i(\omega) \cdot \mathbf{1}_{\{\mathbf{i}|_1=1-i\}},\\
R(\omega,\mathbf{i}) &=& \sum_{i\in\{0,1\}} Z_i(\omega)\cdot \mathbf{1}_{\{\mathbf{i}|_1=i\}},\\
\widetilde{R}(\omega,\mathbf{i}) &=& Z(\omega).
\end{eqnarray*}
We may consider $A$, $B$, $R$ and $\widetilde{R}$ as random variables on $(\Omega',\mathcal{F}',\mathbb{Q})$, and we have the following equation
\[
\widetilde{R}=AR+B.
\]
First we claim that $R$ and $\widetilde{R}$ have the same law. This is due to the fact that for any non-negative Borel function $f$ we have
\begin{eqnarray*}
\mathbb{E}_\mathbb{Q}(f(R)) &=& \mathbb{E}\left(2^{-1}\sum_{i\in \{0,1\}} f(Z_i) \cdot W_{i} \cdot Z_i  \right)\\
&=& \mathbb{E}(f(Z)Z) \\
&=& \mathbb{E}_\mathbb{Q}(f(\widetilde{R})).
\end{eqnarray*}
Then we claim that $A$ and $R$ are independent, since for any non-negative Borel functions $f$ and $g$ we have
\begin{eqnarray*}
\mathbb{E}_\mathbb{Q}(f(A)g(R)) &=& \mathbb{E}\left(2^{-1}\sum_{i\in \{0,1\}^n} f(W_{i})g(Z_i)\cdot W_i \cdot Z_i  \right)\\
&=& \mathbb{E}(f(W_0)W_0)\mathbb{E}(g(Z_0)Z_0) \\
&=& \mathbb{E}_\mathbb{Q}(f(A))\mathbb{E}_\mathbb{Q}(g(R)).
\end{eqnarray*}

We first deal with case (i). The following result comes from the implicit renewal theory of random difference equations given by Goldie in \cite{Gol91} (Lemma 2.2, Theorem 2.3 and Lemma 9.4).

\begin{theorem}\label{irt} Suppose there exists $\kappa>0$ such that
\begin{equation}
\mathbb{E}_\mathbb{Q}(A^\kappa)=1, \ \ \mathbb{E}_\mathbb{Q}(A^\kappa \log^+ A) <\infty,
\end{equation}
and suppose that the conditional law of $\log A$, given $A\neq 0$, is non-arithmetic. For
\[
\widetilde{R}=AR+B,
\]
where $\widetilde{R}$ and $R$ have the same law, and $A$ and $R$ are independent, we have that if
\[
\mathbb{E}_\mathbb{Q}\left((AR+B)^\kappa-(AR)^\kappa\right)<\infty,
\]
then
\[
\lim_{t\to \infty } t^\kappa\mathbb{Q}(R>t)=\frac{\mathbb{E}_\mathbb{Q}\left((AR+B)^\kappa-(AR)^\kappa\right)}{\kappa\mathbb{E}_\mathbb{Q}(A^\kappa \log A)} \in(0,\infty).
\]
\end{theorem}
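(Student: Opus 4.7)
The plan is to follow Goldie's strategy from \cite{Gol91}: reduce the problem to a renewal equation on $\mathbb{R}$ and apply Smith's Key Renewal Theorem. The crucial observation, and the reason the authors can take this statement essentially off the shelf for their cascade setting, is that only the independence of $A$ and $R$ is really used; the variable $B$ is allowed to be correlated with both. The starting identity is
\[
\mathbb{Q}(R > t) \;=\; \mathbb{Q}(\widetilde R > t) \;=\; \mathbb{Q}(AR > t) \;+\; \big[\mathbb{Q}(AR + B > t) - \mathbb{Q}(AR > t)\big],
\]
obtained from $\widetilde R \stackrel{d}{=} R$ together with $\widetilde R = AR + B$. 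By $\mathbb{Q}$-independence of $A$ and $R$, the first term factors as $\int \mathbb{Q}(R > t/a)\, F_A(\mathrm d a)$, while the bracketed difference depends only on the marginals of $AR+B$ and $AR$, hence is blind to the joint law of $B$ with $(A,R)$.

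I would then pass to log-scale with $s = \log t$ and $r(s) = e^{\kappa s}\, \mathbb{Q}(R > e^s)$, and introduce the Esscher-tilted measure $\eta(\mathrm d u) = e^{\kappa u}\, F_{\log A}(\mathrm d u)$. The identity above rewrites as the renewal equation
\[
r(s) \;=\; \int_\mathbb{R} r(s - u)\, \eta(\mathrm d u) \;+\; g(s), \qquad g(s) := e^{\kappa s}\big[\mathbb{Q}(AR+B > e^s) - \mathbb{Q}(AR > e^s)\big].
\]
The hypothesis $\mathbb{E}_\mathbb{Q}(A^\kappa) = 1$ makes $\eta$ a probability measure; its mean $\int u\, \eta(\mathrm d u) = \mathbb{E}_\mathbb{Q}(A^\kappa \log A)$ is finite (using $\mathbb{E}_\mathbb{Q}(A^\kappa \log^+ A) < \infty$ and a routine control of the negative part) and strictly positive by the convexity of $q \mapsto \mathbb{E}_\mathbb{Q}(A^q)$ together with $\mathbb{E}_\mathbb{Q}(A^\kappa)=1$; the non-arithmetic property of $\log A$ given $A\ne 0$ transfers directly to $\eta$.

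The main obstacle is verifying that $g$ is directly Riemann integrable on $\mathbb{R}$, not merely Lebesgue integrable. Its total integral equals $\kappa^{-1}\, \mathbb{E}_\mathbb{Q}[(AR+B)^\kappa - (AR)^\kappa]$, which is finite by hypothesis. To upgrade from Lebesgue to direct Riemann integrability I would invoke Goldie's Lemma 9.4: the map $t \mapsto \mathbb{Q}(AR+B > t) - \mathbb{Q}(AR > t)$ is a difference of two decreasing functions, so after multiplication by $t^\kappa$ and the change of variable $s=\log t$ a dyadic smoothing/monotonization argument produces the required dominating function. This step uses nothing beyond the marginal behaviour of $g$, so the non-independence of $B$ from $R$ never enters.

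Applying Smith's Key Renewal Theorem to the renewal equation then yields
\[
\lim_{s \to \infty} r(s) \;=\; \frac{\int_\mathbb{R} g(s)\,\mathrm d s}{\int_\mathbb{R} u\, \eta(\mathrm d u)} \;=\; \frac{\mathbb{E}_\mathbb{Q}\big[(AR+B)^\kappa - (AR)^\kappa\big]}{\kappa\, \mathbb{E}_\mathbb{Q}(A^\kappa \log A)},
\]
which, translated back, is the desired asymptotic for $t^\kappa\,\mathbb{Q}(R>t)$. Strict positivity of the constant follows from $B \geq 0$ in the cascade application together with $\mathbb{Q}(B>0)>0$, which forces $(AR+B)^\kappa - (AR)^\kappa > 0$ on an event of positive $\mathbb{Q}$-measure.
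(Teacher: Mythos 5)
Your outline follows exactly the route the paper takes: Theorem \ref{irt} is not reproved here but quoted from Goldie's implicit renewal theory (Lemma 2.2, Theorem 2.3 and Lemma 9.4 of \cite{Gol91}), and the only point the paper adds is the one you correctly isolate, namely that Goldie's argument never uses independence of $B$ from $(A,R)$ --- only the independence of $A$ and $R$ (to factor $\mathbb{Q}(AR>t)$) and the marginal laws of $AR+B$ and $AR$ enter. Your renewal-equation setup (tilting by $e^{\kappa u}$, positivity and finiteness of $\mathbb{E}_\mathbb{Q}(A^\kappa\log A)$, transfer of non-arithmeticity, the identity $\int g=\kappa^{-1}\mathbb{E}_\mathbb{Q}[(AR+B)^\kappa-(AR)^\kappa]$, positivity of the constant from $B>0$) is the same as Goldie's.

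The one step that is misstated is the direct Riemann integrability. You propose to show that $g(s)=e^{\kappa s}[\mathbb{Q}(AR+B>e^s)-\mathbb{Q}(AR>e^s)]$ is itself d.R.i.\ from Lebesgue integrability plus the difference-of-monotone structure; that implication is false in general (the difference of two tails can have arbitrarily narrow spikes, so summability of the suprema over unit intervals does not follow from $\int g<\infty$), and it is not what Goldie proves. Goldie's actual route --- and the one this paper reproduces by hand in the arithmetic case (ii) --- is to apply the key renewal theorem to the exponentially smoothed function $\check{r}(x)=\int_{-\infty}^x e^{-(x-t)}r(t)\,\mathrm dt$ (the smoothed $\check{g}$ is d.R.i.\ precisely because of the convolution with the exponential kernel), and then to recover $\lim_{t\to\infty} r(t)$ from $\lim \check{r}$ by a Tauberian step that exploits the monotonicity of $t\mapsto\mathbb{Q}(R>e^t)$, not any regularity of $g$. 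The same smoothing also legitimises iterating the renewal equation into $\check r=\check g * U$. So your proof is correct provided the d.R.i.\ claim is replaced by this smoothing-plus-de-smoothing argument; as written, that single step would not go through.
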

It is worth mentioning that the independence between $B$ and $R$ is not necessary, while in dealing with classical random difference equations it holds systematically and simplifies the verification of crucial assumptions. In our study,  it is crucial that $B$ and $R$ do not need to be independent because the situation for log-infinitely divisible cascades presents much more correlations to control than the case of canonical cascades on homogeneous or Galton-Watson trees. 

For $q\in I_\nu$ we have
\[
\mathbb{E}_\mathbb{Q}(A^{q-1})=2^{1-q}\mathbb{E}(W_0^q)=2^{\varphi(q)}.
\]
Take $\kappa=\zeta-1$ then we get $\mathbb{E}_\mathbb{Q}(A^\kappa)=1$. From $\varphi'(\zeta)<\infty$ it is easy to deduce that $\mathbb{E}_\mathbb{Q}(A^\kappa\log^+ A)<\infty$. In case (i) we have either $\sigma\neq 0$ or $\nu$ is not of the form $\sum_{n\in\mathbb{Z}} p_n \delta_{nh}$ for some $h>0$ and $p_n\ge 0$, thus the conditional law of $\log A$, given $A\neq 0$, is non-arithmetic. So in order to apply Theorem \ref{irt}, it is only left to verify that
$
\mathbb{E}_\mathbb{Q}\left((AR+B)^\kappa-(AR)^\kappa\right)<\infty.
$
To do so, we need the following proposition (in the framework of canonical cascades such a fact is simple to establish due to the independences associated with the branching property (see \cite[Lemma 4.1]{Liu00})).

\begin{proposition}\label{1kappa}
$\mathbb{E}(\mu(I_0)\mu(I_1)^{\kappa})<\infty$.
\end{proposition}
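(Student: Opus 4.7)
The plan is to combine a Fatou-lemma reduction, an adaptation of \cite[Lemma~1]{BaMu03} to our cones, and a careful integrability analysis of the resulting multidimensional integral at the critical exponent $\zeta$.

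By the martingale convergence $\mu_\epsilon(I_j) \to \mu(I_j)$ from Lemma~\ref{Ka}, Fatou's lemma gives
\[
\mathbb{E}(\mu(I_0)\mu(I_1)^\kappa) \le \liminf_{\epsilon \to 0} \mathbb{E}(\mu_\epsilon(I_0)\mu_\epsilon(I_1)^\kappa),
\]
so it suffices to bound the right-hand side uniformly in $\epsilon$. When $\kappa = n$ is a positive integer, Fubini together with the analogue for our cones $V^I$ of \cite[Lemma~1]{BaMu03} and Lemma~\ref{area} yield
\[
\mathbb{E}(\mu_\epsilon(I_0)\mu_\epsilon(I_1)^n) = \frac{C + o_\epsilon(1)}{|I|^{n+1}} \int_{I_0 \times I_1^n} \prod_{0 \le k < l \le n}\Big(\frac{|I|}{t_l - t_k}\Big)^{\alpha(l, k)} dt_0 \cdots dt_n,
\]
where $\alpha(l, k) = \psi(-i(l-k+1)) + \psi(-i(l-k-1)) - 2\psi(-i(l-k)) \ge 0$ by convexity of $q \mapsto \psi(-iq)$, and the $\log(1/\epsilon)$-coefficients cancel thanks to $\psi(-i) = 0$.

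A telescoping computation yields $\sum_{k<l}\alpha(l, k) = \psi(-i(n+1))$; at $n = \kappa$ this equals $\psi(-i\zeta) = \zeta - 1 = n$, \emph{strictly} less than $n+1$ = integration dimension, so the integral is integrable at the coalescence stratum $t_0 = \cdots = t_n = 1/2$. Individual pairwise exponents satisfy $\alpha(l, l-1) = \psi(-2i) = \varphi(2) + 1 \le 1$ on $[1, \zeta]$ (convexity of $\varphi$), with the borderline case $\zeta = 2$ yielding only a log-singularity that integrates to $\log 2$ over $I_0 \times I_1$; higher-gap exponents $\alpha(l,k)$ with $l - k > 1$ are controlled similarly. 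This gives $\sup_\epsilon \mathbb{E}(\mu_\epsilon(I_0)\mu_\epsilon(I_1)^n) < \infty$ in the integer case.

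For non-integer $\kappa$, write $\kappa = n + \eta$ with $n = \lfloor\kappa\rfloor$ and use the layer-cake representation
\[
\mathbb{E}(\mu(I_0)\mu(I_1)^\kappa) = \kappa \int_0^\infty t^{\kappa - 1} \mathbb{E}(\mu(I_0) \mathbf{1}_{\mu(I_1) > t}) dt;
\]
the short-time part is bounded by $\mathbb{E}(\mu(I_0)) = 1/2$, while the long-time tail is handled by Markov at a fractional exponent $q \in (\kappa, n+1)$, using log-convexity of $q \mapsto \mathbb{E}(\mu(I_0)\mu(I_1)^q)$ to interpolate between integer values and extending the moment-formula analysis to $m = n+1$; here the hypothesis $\varphi'(\zeta) < \infty$ of Theorem~\ref{tail}, combined with convexity of $\varphi$, is used to control the relevant exponent sum $\psi(-i(n+2))$ past the critical threshold.

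The technical core and main obstacle is the integrability verification at the critical exponent $\zeta$: the telescoping identity $\sum\alpha = n$ is optimal, just barely avoiding divergence at coalescence thanks to the precise cancellation furnished by $\varphi(\zeta) = 0$. Unlike the canonical cascade case where $\mu(I_0)$ and $\mu(I_1)$ are independent and the corresponding bound is immediate by factorisation, here the correlations through overlapping cones force one to track the full multiplicative structure of the $\alpha(l,k)$ via the BaMu03 moment formula and to exploit the convexity of $\varphi$ throughout.
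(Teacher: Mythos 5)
Your Fatou reduction and the use of the Bacry--Muzy moment formula are in the spirit of the paper's argument, and for integer $\kappa$ your power counting identifies the right mechanism (full coalescence is pinned at the corner $t_0=\cdots=t_n=1/2$, which buys one unit of codimension compared with $\mathbb{E}(Z^{\zeta})=\infty$). But as written this is a heuristic, not a proof: finiteness of $\int\prod_{k<l}(t_l-t_k)^{-\alpha(l,k)}$ requires controlling \emph{every} partial-coalescence configuration, not just the total exponent sum at the corner. For instance the cluster of all $n$ points of $I_1$ at an interior point of $I_1$ has threshold $n-1$, and its internal exponent sum is $\psi(-in)$, which is $<n-1$ only because $\varphi(n)<0$ --- a fact you never invoke; mixed clusters containing $t_0$ near $1/2$ must be checked as well. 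This bookkeeping is exactly what the paper carries out via the linearization $\widehat\psi(p)\le(1-\delta)(p-1)$ and the explicit iterated integral. Also, your claim $\alpha(l,l-1)=\psi(-2i)\le 1$ presumes $2\le\zeta$ and $2\in I_\nu$, neither of which is among the hypotheses.

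The genuine gap is the non-integer case, which is the generic one and the technical core of the paper's proof. Your plan bounds the tail by Markov at some $q\in(\kappa,n+1)$, which requires $\mathbb{E}(\mu(I_0)\mu(I_1)^{q})<\infty$ for such a $q$ --- a statement of the same type as, and stronger than, the proposition itself --- and you propose to get it by log-convex interpolation between the integer moments $n$ and $n+1$, ``extending the moment-formula analysis to $m=n+1$''. That endpoint is not available: in the non-integer case $\zeta<n+1$ strictly, so either $n+1\notin I_\nu$ (the hypotheses only give $\zeta\in I_\nu$) and the moment formula does not exist, or $\varphi(n+1)>0$, in which case the internal exponent sum of the $n+1$ points lying in $I_1$ equals $\psi(-i(n+1))>n$, exceeding the interior-coalescence threshold $n$, so the integral representing $\mathbb{E}(\mu(I_0)\mu(I_1)^{n+1})$ diverges (consistently with $\mathbb{E}(Z^{n+1})=\infty$). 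The hypothesis $\varphi'(\zeta)<\infty$ gives no control whatsoever on $\psi(-i(n+2))$. Thus the interpolation collapses exactly where the difficulty lies. The paper avoids this by never exceeding the critical total degree: it keeps the fractional exponent $q=\kappa-n+1\in(0,1]$, splits $\bigl[\int_{1/2}^1 e^{\Lambda(V_\epsilon^I(t_n))}\,\mathrm dt_n\bigr]^q$ by subadditivity over the intervals determined by $t_1,\dots,t_{n-1}$, factors out $\sup_t e^{q\Lambda(V_\epsilon^I(t)\cap V_\epsilon^I(t_0))}$ and uses $x^q\le 1+x$, controls the supremum by Doob's $L^\gamma$ inequality for a submartingale under suitably tilted laws, and only then runs the exponent analysis with the $\delta$-margin coming from $\varphi<0$ on $(1,\zeta)$. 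Some device of this kind for the fractional power is indispensable, and your proposal has no substitute for it.
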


We have
\[
\mathbb{E}_\mathbb{Q}\left((AR+B)^\kappa-(AR)^\kappa\right)=2\mathbb{E}\left((\mu(I)^\kappa-\mu(I_0)^\kappa)\cdot \mu(I_0)\right).
\]
By using the following inequality
\[
(x+y)^\kappa-x^\kappa \le \left\{ \begin{array}{ll}
y^\kappa, & 0<\kappa\le 1,\\
\kappa2^{\kappa-1} y (x^{\kappa-1}+y^{\kappa-1}), & 1<\kappa<\infty.
\end{array}
\right.\ \ x,y>0,
\]
it is easy to find a constant $C_\kappa$ such that
\[
\mathbb{E}_\mathbb{Q}\left((AR+B)^\kappa-(AR)^\kappa\right) \le C_\kappa \mathbb{E}(\mu^I(I_0)\mu^I(I_1)^{\kappa}).
\]
Then from Proposition \ref{1kappa} we get $\mathbb{E}_\mathbb{Q}\left((AR+B)^\kappa-(AR)^\kappa\right)<\infty$.

We have verified all the assumptions in Theorem \ref{irt}, thus
\[
\lim_{t\to \infty } t^\kappa\mathbb{Q}(R>t)=\frac{\mathbb{E}_\mathbb{Q}\left((AR+B)^\kappa-(AR)^\kappa\right)}{\kappa\mathbb{E}_\mathbb{Q}(A^\kappa \log A)}=d' \in(0,\infty).
\]
Notice that $\mathbb{Q}(R>t)=\int_t^\infty x \, \mathbb{P}(Z\in dx)$. From \cite[Lemma 4.3]{Liu00} we get
\[
\lim_{t\to \infty } t^\zeta \mathbb{P}(Z>t) =\frac{d' (\zeta-1)}{\zeta}.
\]
It is easy to verify that
\[
d'=\frac{2\mathbb{E}\left(\mu(I)^{\zeta-1}\mu(I_0)-\mu(I_0)^{\zeta}\right)}{(\zeta-1) \varphi'(\zeta)\log 2},
\]
and this gives the conclusion.

For case (ii), we may apply the key renewal theorem in the arithmetic case instead of the non-arithmetic case used in Goldie's proof of Theorem 2.3, Case 1 (\cite[page 145, line 21]{Gol91}) to get that for $x\in\mathbb{R}$,
\[
\check{r}(x+nh)\to d(x), \ n\to \infty,
\]
where $0<d(x)<\infty$, $r(t)=e^{\kappa t}\mathbb{Q}(R>e^t)$ and
\[
\check{r}(x)=\int_{-\infty}^x e^{-(x-t)} r(t) \, \mathrm dt.
\]
We have for $x+h>y$,
\begin{eqnarray*}
\check{r}(x+h)-\check{r}(y)&=&\int_{0}^{e^{x+h}} e^{-(x+h)}u^\kappa\cdot \mathbb{Q}(R>u) \, \mathrm du-\int_{0}^{e^{y}} e^{-y}u^\kappa\cdot \mathbb{Q}(R>u) \, \mathrm du\\
&=& \frac{e^{-(x+h)}-e^{-y}}{e^{-y}}\check{r}(y) +e^{-(x+h)} \int_{e^y}^{e^{x+h}}u^\kappa\cdot \mathbb{Q}(R>u) \, \mathrm du,
\end{eqnarray*}
thus
\[
\check{r}(x+h)-e^{y-x-h}\check{r}(y)=e^{-(x+h)} \int_{e^y}^{e^{x+h}}u^\kappa\cdot \mathbb{Q}(R>u) \, \mathrm du.
\]
On one hand we have
\begin{eqnarray*}
 e^{-(x+h)} \int_{e^y}^{e^{x+h}}u^\kappa\cdot \mathbb{Q}(R>u) \, \mathrm du&\le& e^{-(x+h)} \cdot e^{(x+h)\kappa}\cdot \mathbb{Q}(R>e^y)\cdot  (e^{x+h}-e^y)\\
&=&(1-e^{y-x-h})\cdot e^{(x+h)\kappa}\cdot \mathbb{Q}(R>e^y).
\end{eqnarray*}
This gives that
\[
\liminf_{n\to \infty}e^{(y+nh)\kappa}\cdot \mathbb{Q}(R>e^{y+nh}) \ge e^{-(x+h-y)\kappa}(1-e^{y-x-h})^{-1}[d(x)-e^{y-x-h}d(y)].
\]
On the other hand we have
\begin{eqnarray*}
 e^{-(x+h)} \int_{e^y}^{e^{x+h}}u^\kappa\cdot \mathbb{Q}(R>u) \, \mathrm du&\ge& e^{-(x+h)} \cdot e^{y\kappa}\cdot \mathbb{Q}(R>e^{x+h})\cdot  (e^{x+h}-e^y)\\
&=&(1-e^{y-x-h})\cdot e^{y\kappa}\cdot \mathbb{Q}(R>e^{x+h}).
\end{eqnarray*}
This gives
\[
\limsup_{n\to \infty}e^{(x+nh)\kappa}\cdot \mathbb{Q}(R>e^{x+nh}) \le e^{(x+h-y)\kappa}(1-e^{y-x-h})^{-1}[d(x)-e^{y-x-h}d(y)].
\]
From these two estimation we can get the conclusion by using the same arguments as in Lemma 4.3(ii) and Theorem 2.2 in \cite{Liu00}. \qed

\subsection{Proof of Proposition \ref{1kappa}}

We have almost surely
\begin{eqnarray*}
\mu(I_0)\mu(I_1)^\kappa&=&\lim_{\epsilon\to 0} \mu_\epsilon(I_0)\mu_\epsilon(I_1)^\kappa\\
&=& \lim_{\epsilon\to 0}\left(\int_{t\in I_0} e^{\Lambda(V_\epsilon^I(t))} \, \mathrm dt\right)\cdot \left(\int_{t\in I_1} e^{\Lambda(V_\epsilon^I(t))} \, \mathrm dt\right)^\kappa.
\end{eqnarray*}
Let $n\ge 1$ be an integer such that $n-1< \kappa\le n$, so $q=\kappa-n+1\in (0,1]$. Thus
\[
\left(\int_{t\in I_1} e^{\Lambda(V_\epsilon^I(t))} \, \mathrm dt\right)^\kappa=\left(\int_{t\in I_1} e^{\Lambda(V_\epsilon^I(t))} \, \mathrm dt\right)^{n-1}\left(\int_{t\in I_1} e^{\Lambda(V_\epsilon^I(t))} \, \mathrm dt\right)^q
\]
Then we get from Fatou's lemma and Fubini's theorem that
\begin{equation}\label{kappa}
\begin{aligned}
&\mathbb{E}(\mu(I_0)\mu(I_1)^\kappa) \le \liminf_{\epsilon\to\infty} \\
&\int_{t_0\in I_0,t_1,\cdots,t_{n-1}\in I_1}\mathbb{E}\left( \prod_{k=0}^{n-1} e^{\Lambda(V_\epsilon^I(t_k))} \cdot \Big[\int_{1/2}^1e^{\Lambda(V_\epsilon^I(t_n))}\, \mathrm dt_n \Big]^q\right) \, \mathrm dt_0 \cdots \mathrm d t_{n-1}.
\end{aligned}
\end{equation}
Denote by $s_0=1/2$, $s_{n}=1$ and $s_1<\cdots<s_{n-1}$ the permutation of $t_1,\cdots,t_{n-1}$. Then from the sub-additivity of $x\mapsto x^q$ we get
\[
\Big[\int_{1/2}^1e^{\Lambda(V_\epsilon^I(t_n))}\, \mathrm dt_n \Big]^q \le \sum_{j=0}^{n-1} \Big[\int_{s_j}^{s_{j+1}}e^{\Lambda(V_\epsilon^I(t_n))}\, \mathrm dt_n \Big]^q.
\]
For each $j=0,\cdots,n-1$ we have
\begin{eqnarray*}
&&\Big[\int_{s_j}^{s_{j+1}}e^{\Lambda(V_\epsilon^I(t_n))}\, \mathrm dt_n \Big]^q\\
&\le&\sup_{s_j<t<s_{j+1} }e^{q\Lambda(V_\epsilon^I(t)\cap V_\epsilon^I(t_0))}\cdot \Big[\int_{s_j}^{s_{j+1}}e^{\Lambda(V_\epsilon^{I}(t_n)\setminus V^I_\epsilon(t_0))}\, \mathrm dt_n \Big]^q\\
&\le&\sup_{s_j<t<s_{j+1} }e^{q\Lambda(V_\epsilon^I(t)\cap V_\epsilon^I(t_0))}\cdot \Big[1+\int_{s_j}^{s_{j+1}}e^{\Lambda(V_\epsilon^{I}(t_n)\setminus V^I_\epsilon(t_0))}\, \mathrm dt_n \Big],
\end{eqnarray*}
where we have used the elementary inequality $x^q\le 1+x$ for $x>0$ and $q\in(0,1]$.
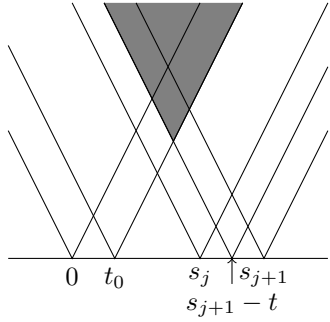
\begin{figure}[ht]
\centering
\begin{tikzpicture}[xscale=1.7,yscale=1.7]
\draw [fill, fill=gray] (1/4,2) -- (1/3+11/24,11/12)  -- (4/3,2);
\draw (-1/2,0) -- (2,0);
\draw (-1/2,5/3) -- (1/3,0) -- (4/3,2); 
\draw (-1/2,1) -- (0,0) -- (1,2);
\draw (0,2) -- (1,0) -- (2,2);
\draw (1/2,2) -- (3/2,0) -- (2,1);
\draw (1/4,2) -- (5/4,0) -- (2,3/2););
\node [below] at (0,0) {$0$};
\node [below] at (1/3,0) {$t_0$};
\node [below] at (1,0) {$s_j$};
\node [below] at (3/2,0) {$s_{j+1}$};
\node [below] at (5/4,-0.2) {$s_{j+1}-t$};
\draw [->] (5/4,-0.2) -- (5/4,0);
\end{tikzpicture}
\caption{The gray area for $V_\epsilon^{I}(s_{j+1}-t)\cap V_\epsilon^I(t_0)$.}\label{figure4}
\end{figure}
For $t\in [0,s_{j+1}-s_j]$ define process $Y_{t}= e^{q\Lambda(V_\epsilon^I(s_{j+1}-t)\cap V_\epsilon^I(t_0))}$ and its natural filtration $\mathcal F_t=\sigma(Y_s: 0\le s\le t )$. Notice that the set $V_\epsilon^I(s_{j+1}-t)\cap V_\epsilon^I(t_0)$ is increasing with respect to $t$ (see Figure \ref{figure4}), thus we actually have
\[
\mathcal F_t=\sigma ( \Lambda (V_\epsilon^{I}(s_{j+1}-t)\cap V_\epsilon^I(t_0))).
\]
For $\eta\in\{0,1\}$ define $D_\eta=e^{\eta\Lambda(V_\epsilon^{I}(t_n)\setminus V^I_\epsilon(t_0))} \prod_{k=0}^{n-1} e^{\Lambda(V_\epsilon^I(t_k))}$. Under the probability $\mathrm{d}\mathbb P_\eta=\frac{D_\eta}{\mathbb E(D_\eta)}\mathrm{d}\mathbb P$ we have the following two facts: (1) $t\mapsto \mathbb{E}_{\mathbb{P}_\eta}(Y_{t})$ is continuous; (2) $Y_t$ is a positive submartingale with respect to $\mathcal{F}_{t}$. The continuity and positivity are obvious, so we only need to verify the following: for $0<s<s+\epsilon<s_{j+1}$ if we write $\Delta_{s,\epsilon}=(V_\epsilon^I(s_{j+1}-t-\epsilon)\setminus V_\epsilon^I(s_{j+1}-t ))\cap V_\epsilon^I(t_0)$ and let $m$ be the corresponding power of $e^{\Lambda(\Delta_{s,\epsilon})}$ appeared in $D_\eta$, then we have
\begin{eqnarray*}
\mathbb{E}_{\mathbb{P}_\eta}(Y_{s}| \mathcal{F}_{s}) &=& e^{(\psi(-i(q+m))-\psi(-im)) \lambda(\Delta_{s,\epsilon})} \cdot \mathbb{E}_{\mathbb{P}_\eta}(Y_{s}| \mathcal{F}_{s}) \\
&\ge& \mathbb{E}_{\mathbb{P}_\eta}(Y_{s}| \mathcal{F}_{s}),
\end{eqnarray*}
where the inequality comes from the fact that $\psi(-ip)$ is an increasing function of $p$ on the right of $1$ since it is convex and $\frac{d}{dp} \psi(-ip)|_{p=1}>0$. When $t_0,s_j, s_{j+1}$ are fixed, we have $\sup_{0<t<s_{j+1}-s_j} \mathbb{E}_{\mathbb{P}_\eta}(Y_{t})<\infty$, thus almost every path of $Y_t$ is c\`adl\`ag (see \cite[Proposition 2.6, Theorem 2.8]{ReYo99} for example). Then Doob's inequality applied with $L^{\gamma}$ ($\gamma>1$) yields $c=c(\gamma)$ such that 
\begin{multline*}
\mathbb E\left (e^{\eta\Lambda(V_\epsilon^{I}(t_n)\setminus V^I_\epsilon(t_0))} \Big (\prod_{k=0}^{n-1} e^{\Lambda(V_\epsilon^I(t_k))}\Big ) \sup_{s_j<t<s_{j+1} }e^{q\Lambda(V_\epsilon^I(t)\cap V_\epsilon^I(t_0))}\right )\\
\le  c \mathbb{E}(D_\eta) ^{1-1/\gamma}\left[ \mathbb E\left (e^{\eta\Lambda(V_\epsilon^{I}(t_n)\setminus V^I_\epsilon(t_0))} \Big (\prod_{k=0}^{n-1} e^{\Lambda(V_\epsilon^I(t_k))}\Big ) e^{q\gamma\Lambda(V_\epsilon^I(s_j)\cap V_\epsilon^I(t_0))}\right )\right]^{1/\gamma}.
\end{multline*}
Thus
\begin{eqnarray*}
&&\mathbb{E}\left( \prod_{k=0}^{n-1} e^{\Lambda(V_\epsilon^I(t_k))} \cdot \Big[\int_{s_j}^{s_{j+1}}e^{\Lambda(V_\epsilon^I(t_n))}\, dt_n \Big]^q\right)\\
&\le& c\mathbb{E}(D_0)^{1-1/\gamma}\left[  \mathbb{E}\left( \prod_{k=0}^{n-1} e^{\Lambda(V_\epsilon^I(t_k))} \cdot e^{q\gamma\Lambda(V_\epsilon^I(s_j)\cap V_\epsilon^I(t_0))}\right )\right]^{1/\gamma} + c \mathbb{E}(D_1)^{1-1/\gamma} \cdot \\
&& \int_{s_j}^{s_{j+1}}\left[   \mathbb E\left (e^{\Lambda(V_\epsilon^{I}(t_n)\setminus V^I_\epsilon(t_0))} \Big (\prod_{k=0}^{n-1} e^{\Lambda(V_\epsilon^I(t_k))}\Big ) e^{q\gamma\Lambda(V_\epsilon^I(s_j)\cap V_\epsilon^I(t_0))}\right )\right]^{1/\gamma}\, \mathrm dt_n .
\end{eqnarray*}
For $\eta,\eta'\in\{0,1\}$ and $t_n\in [s_j,s_{j+1})$ define
\[
\widetilde{\Lambda}_{\eta,\eta'}(t_n)=\begin{cases}
q\gamma\eta'\Lambda(V_\epsilon^I(s_j)\cap V_\epsilon^I(t_0))+\eta\Lambda(V_\epsilon^{I}(t_n)\setminus V^I_\epsilon(t_0))& \text{  if } q<1,\\
\Lambda(V_\epsilon^I(t_n)) &\text{ if } q=1.
\end{cases}
\]
Then define
\[
\overline{D}_{\eta,\eta'}(t_0,\cdots,t_n)=\mathbb{E}\left( \prod_{j=0}^{n-1} e^{\Lambda(V_\epsilon^I(t_j))} \cdot e^{\widetilde{\Lambda}_{\eta,\eta'} (t_n)}\right).
\]
It is easy to see that $\mathbb{E}(D_0)=\overline{D}_{0,0}(t_0,\cdots,t_n)$, $\mathbb{E}(D_1)=\overline{D}_{1,0}(t_0,\cdots,t_n)$, 
\[
\mathbb{E}\left( \prod_{k=0}^{n-1} e^{\Lambda(V_\epsilon^I(t_k))} \cdot e^{q\gamma\Lambda(V_\epsilon^I(s_j)\cap V_\epsilon^I(t_0))}\right )=\overline{D}_{0,1}(t_0,\cdots,t_n)
\]
and
\[
\mathbb E\left (e^{\Lambda(V_\epsilon^{I}(t_n)\setminus V^I_\epsilon(t_0))} \Big (\prod_{k=0}^{n-1} e^{\Lambda(V_\epsilon^I(t_k))}\Big ) e^{q\gamma\Lambda(V_\epsilon^I(s_j)\cap V_\epsilon^I(t_0))}\right )=\overline{D}_{1,1}(t_0,\cdots,t_n).
\]
Also set $\gamma_q=\gamma$ if $q<1$ and $\gamma_q=1$ if $q=1$. We finally get
\begin{eqnarray*}
&&\mathbb{E}\left( \prod_{j=0}^{n-1} e^{\Lambda(V_\epsilon^I(t_j))} \cdot \Big[\int_{1/2}^1e^{\Lambda(V_\epsilon^I(t_n))}\, dt_n \Big]^q\right)  \\
&\le&  2c\cdot  \sum_{\eta\in\{0,1\}}\mathbb{E}(D_{\eta,0})(t_0,\cdots,t_n)^{1-1/\gamma_q}\int_{1/2}^1\overline{D}_{\eta,1}(t_0,\cdots,t_n)^{1/\gamma_q} \mathrm dt_n\\
&\le& 4c\cdot \int_{1/2}^1\max_{\eta,\eta'\in\{0,1\}} \overline{D}_{\eta,\eta'}(t_0,\cdots,t_n) \, \mathrm dt_n.
\end{eqnarray*}

Now fix $t_0,\cdots,t_n$ and redefine $s_0=t_0$, $s_1=1/2$ and $s_2<\cdots<s_{n+1}$ the permutation of $t_1,\cdots,t_n$. Let $j_*$ be such that $s_{j_*}=t_n$. Define
\[
\left\{\begin{array}{ll}
p_0=1; & \\
p_1=0; & \\
p_j=1, & \text{for } j\neq j_*;\\
p_{j_*}=\eta, & \text{in case of } q<1 ;\\
p_{j_*}=1, & \text{in case of } q=1 .\\
\end{array}
\right.
\]
For $k=0,\cdots,n$ and $j=k,\cdots,n+1$ define
\[
r_{k,j}=\left\{
\begin{array}{ll}
q\gamma\eta'+\sum_{l=k,\cdots,j; s_j\neq t_n} p_l, & \text{if } q<1, k=0 \text{ and } t_n\in\{s_{j},s_{j+1}\};\\
\sum_{l=k,\cdots,j} p_l, & \text{otherwise}.
\end{array}
\right.
\]
and let $r_{k,j}=0$ for $k<j$. Then by using the same argument as \cite[Lemma 1]{BaMu03} (notice that $r_{k,j}$ represents the power to $e^{V^I_\epsilon(s_k)\cap V^I_\epsilon(s_j) \setminus (V^I_\epsilon(s_{k-1})\cup V^I_\epsilon(s_{j+1}))}$ which appears in the product $\prod_{j=0}^{n-1} e^{\Lambda(V_\epsilon^I(t_j))} \cdot e^{\widetilde{\Lambda}_{\eta,\eta'} (t_n)}$, and that $\lambda \big (V^I_\epsilon(s_k)\cap V^I_\epsilon(s_j) \setminus (V^I_\epsilon(s_{k-1})\cup V^I_\epsilon(s_{j+1}))\big)=\log \frac{1}{s_j-s_k}+\log \frac{1}{s_{j+1}-s_k}-\log \frac{1}{s_j-s_{k-1}}-\log \frac{1}{s_{j+1}-s_{k-1}}$, see Figure \ref{figure2}) we can get
\[
\overline{D}_{\eta,\eta'}(t_0,\cdots,t_n)= \sum_{k=0}^{n}\sum_{j=k+1}^{n+1} \alpha(j,k)\cdot \log \frac{1}{s_j-s_k},
\]
where
\[
\alpha(j,k)=\psi(-i r_{k,j})+\psi(-i r_{k+1,j-1})-\psi(-ir_{k,j-1})-\psi(-ir_{k+1,j}).
\]
\begin{figure}[ht]
\centering
\begin{tikzpicture}[xscale=1.7,yscale=1.7]
\draw (-1/2,0) -- (2,0);
\draw (-1/2,5/3) -- (1/3,0) -- (4/3,2); 
\draw (-1/2,1) -- (0,0) -- (1,2);
\draw (0,2) -- (1,0) -- (2,2);
\draw (1/2,2) -- (3/2,0) -- (2,1);
\draw [fill, fill=gray] (3/4,3/2) -- (1/2,1) -- (2/3,2/3) -- (11/12,7/6) -- (3/4,3/2);
\node [below] at (0,0) {$s_{k-1}$};
\node [below] at (1/3,0) {$s_k$};
\node [below] at (1,0) {$s_j$};
\node [below] at (3/2,0) {$s_{j+1}$};
\node [below] at (2/3,0) {$\cdots$};
\end{tikzpicture}
\caption{$r_{k,j}$ is the power corresponding to the gray area.}\label{figure2}
\end{figure}
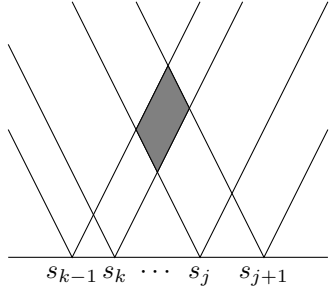
Let $\widetilde\psi(p)=\psi(-ip)$. By definition of $\kappa$, we have $\widetilde\psi(p)<p-1$ for all $p\in (1,n+q)$, and $\widetilde\psi(n+q)=n+q-1$. Moreover, $\widetilde \psi'(1)<1$ since $\varphi'(1)<0$, and $\widetilde\psi(1)=0$. Consequently, there exists $\delta\in (0,1)$ such $\widetilde \psi (p)\le (1-\delta)(p-1)$ for $p\in[1,n]$; in particular by convexity of $\widetilde\psi$ we have $1-\delta\ge \widetilde\psi'(1)$. Moreover, notice that $\widetilde\psi(p)\le 0$ for $p\in (0,1)$ since $\widetilde\psi(0)=0=\widetilde\psi(1)$ and $\widetilde\psi$ is convex, and also $\widetilde \psi(p)\ge \widetilde\psi'(1) (p-1)$ for all $p\ge 0$, which yields for $p\in [0,1]$, $\widetilde \psi(p)  \ge (1-\delta)(p-1)$. Finally, in case of $q<1$, we take $\gamma>1$ small enough such  that $q\gamma<1$ and $\widetilde{\psi}(n+q\gamma)-n+1=q'<1$.
 
\medskip

(i) If $n=1$, that is $0< \kappa\le 1$, $q=\kappa$ and $\widetilde\psi(1+q\gamma)=q'<1$. We have $s_0=t_0\in [0,1/2)$, $s_1=1/2$, $s_2=t_1\in [1/2,1)$ and $s_3=1$.

If $q<1$, we have
\[
r_{0,0}=1,\ r_{0,1}=1+q\gamma\eta',\ r_{0,2}=1+q\gamma\eta', \ r_{1,1}=0,\ r_{1,2}=\eta,\ r_{2,2}=\eta.
\]
This gives
\begin{eqnarray*}
\alpha(0,1)&=&\widetilde\psi(1+q\gamma\eta')+\widetilde\psi(0)-\widetilde\psi(1)-\widetilde\psi(0)\le q',\\
\alpha(0,2)&=&\widetilde\psi(1+q\gamma\eta')+\widetilde\psi(0)-\widetilde\psi(1+q\gamma\eta')-\widetilde\psi(\eta)=0,\\
\alpha(1,2)&=&\widetilde\psi(\eta)+\widetilde\psi(0)-\widetilde\psi(0)-\widetilde\psi(\eta)=0.
\end{eqnarray*}
Thus
\[
\mathbb{E}(\mu(I_0)\mu(I_1)^\kappa)\le 4c\cdot \int_{0}^{1/2} (1/2-s)^{-q'} \, ds <\infty.
\]

If $q=1$, we have
\[
r_{0,0}= r_{0,1}=r_{1,1}= r_{1,2}=1,\ r_{0,2}=2.
\]
This gives $\alpha(0,1)=\alpha(1,2)=0$ and $\alpha(0,2)=\widetilde\psi(2)=1$. Thus
\[
\mathbb{E}(\mu(I_0)\mu(I_1))= \int_{0}^{1/2}\int_{1/2}^1 (t_1-t_0)^{-1} \, \mathrm dt_0 \mathrm dt_1 =\log 2<\infty.
\]
\begin{remark}\label{tr1.2}
Here we have an equality since when $q$ is an integer we do not need to use Doob's inequality to estimate \eqref{kappa} and we can apply the martingale convergence theorem and dominated convergence theorem as in Section \ref{p5.2}. The identity $\mathbb{E}(\mu(I_0)\mu(I_1))=\log 2$ yields the precise formula in Remark \ref{r1.2}.
\end{remark}

\medskip

(ii) The case $n\ge 2$ is more involved. For $0\le k<j\le n+1$, write 
$$
\alpha(j,k)=\beta(j,k)-\beta(j,k+1),\text{ where } \beta(j,k)=\widetilde\psi(r_{k,j})-\widetilde\psi(r_{k,j-1}).
$$
Then 
\begin{eqnarray*}
\sum_{k=0}^{n}\sum_{j=k+1}^{n+1} \alpha(j,k)\cdot \log \frac{1}{s_j-s_k}&=& \sum_{k=0}^{n}\sum_{j=k+1}^{n+1} (\beta(j,k)-\beta(j,k+1))\cdot \log \frac{1}{s_j-s_k}\\
&=& \sum_{j=1}^{n+1}\sum_{k=0}^{j-1}(\beta(j,k)-\beta(j,k+1))\cdot \log \frac{1}{s_j-s_k}\\
&=& \widetilde A+\widetilde B+\widetilde C, 
\end{eqnarray*}
where 
\begin{align*}
\widetilde A&=\sum_{j=1}^{n+1}\sum_{k=0}^{j-1}\beta(j,k)\cdot \log \frac{s_j-s_{k-1}}{s_j-s_k},\\ 
\widetilde B&=\sum_{j=1}^{n+1}\beta (j,0)\cdot \log \frac{1}{s_j-s_0},\ \widetilde C=-\sum_{j=1}^{n+1}\beta (j,j)\cdot \log \frac{1}{s_j-s_{j-1}}.
 \end{align*}
 Now, using the definition of $\beta(j,k)$ we get 
\begin{align*}
\widetilde A&=\sum_{k=1}^{n}\sum_{j=k+1}^{n+1}\beta(j,k)\cdot \log \frac{s_j-s_{k-1}}{s_j-s_k},\\ 
&=\sum_{k=1}^{n}\widetilde\psi(r_{k,n+1})\cdot \log \frac{s_{n+1}-s_{k-1}}{s_{n+1}-s_k}\\
&\quad\quad\quad\quad\quad\quad\quad\quad + \sum_{k=1}^{n} \sum_{j=k+1}^{n}\widetilde\psi(r_{k,j})\cdot \Big (\log \frac{s_j-s_{k-1}}{s_j-s_k} - \log \frac{s_{j+1}-s_{k-1}}{s_{j+1}-s_k}\Big )\\
&\quad\quad\quad\quad\quad\quad\quad\quad\quad\quad \quad\quad \quad\quad\quad\quad\quad\quad\quad\quad -\sum_{k=1}^{n} \widetilde\psi (r_{k,k})\cdot \log \frac{s_{k+1}-s_{k-1}}{s_{k+1}-s_k},
  \end{align*} 
$$
\widetilde B= \widetilde\psi(r_{0,n+1})\cdot \log \frac{1}{s_{n+1}-s_0} +\sum_{j=1}^{n}\widetilde\psi(r_{0,j}) \cdot \log \frac{s_{j+1}-s_{0}}{s_{j}-s_0} -\widetilde\psi (r_{0,0})\cdot \log \frac{1}{s_{1}-s_0},
 $$
 $$
 \widetilde C=-\sum_{j=1}^n\widetilde\psi(r_{j,j})\cdot \log \frac{1}{s_j-s_{j-1}}.
 $$
 First notice that $r_{j,j}\in\{0,1\}$ for $j=1,\cdots,n$, thus $ \widetilde C=0$. Let $\widehat \psi(r)=(1-\delta) (r-1)$ for $r\ge 1$ and $\widehat \psi(0)=0$. We have $\widetilde \psi(r)\le \widehat \psi(r)$ for $1\le r\le \zeta-q$, and $\widetilde \psi(n+q\gamma)=n-1+q'=\widehat \psi(n+q')+\delta (n+q'-1)$ if $q<1$, as well as $\widetilde \psi(n+q)=n+q-1=\widehat \psi(n+q)+\delta (n+q-1)$ if $q=1$. Now, define formally $\widehat A$ and $\widehat B$ as $\widetilde A$ and $\widetilde B$, by replacing $\widetilde \psi$ by $\widehat \psi$. Notice that all the $\log \frac{1}{s_j-s_k}$ and $\Big (\log \frac{s_j-s_{k-1}}{s_j-s_k} - \log \frac{s_{j+1}-s_{k-1}}{s_{j+1}-s_k}\Big )$ are positive. Then, remembering that $r_{0,n+1}=n+q\gamma_q$ and rewriting $\widetilde \psi (r_{0,n+1})=\delta (r_{0,n+1}'-1)+\widehat  \psi(r_{0,n+1}')$ in expression $\widetilde B$, where $r_{0,n+1}'=n+q'$ if $q<1$ and $r_{0,n+1}'=n+q$ if $q=1$, and remembering also that $\widetilde\psi (r_{j,j})=\widehat  \psi(r_{j,j})$ for $j=0,\cdots,n$ since $r_{j,j}\in\{0,1\}$, the previous inequalities between $\widetilde \psi$ and $\widehat \psi$ yield:
 $$
 \sum_{k=0}^{n}\sum_{j=k+1}^{n+1} \alpha(j,k)\cdot \log \frac{1}{s_j-s_k}\le \delta (r_{0,n+1}'-1)\cdot \log \frac{1}{s_n-s_0}+\widehat A+\widehat B.
 $$
Now define $\widehat\beta(j,k):=\widehat\psi(r_{k,j})-\widehat\psi(r_{k,j-1})$. It is easy to see that $\widehat\beta(j,k)\le 1-\delta$ for $0\le k<j\le n+1$ since $r_{k,j}-r_{k,j-1} \le 1$ (when $q<1$, we have chosen $\gamma$ small enough such that $q\gamma<1$). Thus
 \begin{align*}
\widehat A&=\sum_{j=1}^{n+1}\sum_{k=0}^{j-1}\widehat\beta(j,k)\cdot \log \frac{s_j-s_{k-1}}{s_j-s_k}\le (1-\delta)\sum_{j=1}^n \log \frac{s_j-s_{0}}{s_j-s_{j-1}}\\ 
\widehat B&=\sum_{j=1}^{n+1}\widehat \beta (j,0)\cdot \log \frac{1}{s_j-s_0}\le (1-\delta)\sum_{j=1}^n\cdot \log \frac{1}{s_j-s_0}.
 \end{align*}
This gives
 $$
 \widehat A+\widehat B\le (1-\delta)  \sum_{j=1}^n\log \frac{1}{s_j-s_{j-1}},
 $$
and bounding $r_{0,n+1}-1$ by $n$ (we have chosen $q'<1$), we get 
$$
\sum_{k=0}^{n}\sum_{j=k+1}^{n+1} \alpha(j,k)\cdot \log \frac{1}{s_j-s_k}\le n\delta \cdot \log \frac{1}{s_{n+1}-s_0}+ (1-\delta)  \sum_{j=1}^{n+1}\log \frac{1}{s_j-s_{j-1}}.
$$
One has
\[
\begin{aligned}
&\quad\int_{0}^{1/2}\int_{1/2<s_2<\cdots <s_{n+1}<1} \frac{ds_{n+1}ds_{n}\cdots ds_2ds_0}{{\scriptstyle (s_{n+1}-s_0)^{n\delta}[(s_{n+1}-s_{n})\cdots(s_2-1/2) (1/2-s_0)]^{1-\delta}}}\\
&=\int_{0}^{1/2}\int_{1/2<s_2<\cdots <s_{n}<1}\int_0^{1-s_n} \frac{duds_{n}\cdots ds_2ds_0}{{\scriptstyle(u+s_{n}-s_0)^{n\delta}[u(s_n-s_{n-1})\cdots(s_2-1/2) (1/2-s_0)]^{1-\delta}}}\\
&=\frac{1}{\delta}\int_{0}^{1/2}\int_{1/2<s_2<\cdots <s_{n}<1}\int_0^{(1-s_n)^\delta} \frac{dvds_{n}\cdots ds_2ds_0}{{\scriptstyle (v^{1/\delta}+s_{n}-s_0)^{n\delta}[(s_n-s_{n-1})\cdots(s_2-1/2) (1/2-s_0)]^{1-\delta}}}\\
&\le\frac{2^{n/\delta}}{\delta}\int_{0}^{1/2}\int_{1/2<s_2<\cdots <s_{n}<1}\int_0^{(1-s_n)^\delta} \frac{dvds_{n}\cdots ds_2ds_0}{{\scriptstyle (v+(s_{n}-s_0)^\delta)^{n}[(s_n-s_{n-1})\cdots(s_2-1/2) (1/2-s_0)]^{1-\delta} }}\\
&\le\frac{2^{n/\delta}}{(n-1)\delta}\int_{0}^{1/2}\int_{1/2<s_2<\cdots <s_{n}<1} \frac{ds_{n}\cdots ds_2ds_0}{{\scriptstyle (s_{n}-s_0)^{(n-1)\delta}[(s_n-s_{n-1})\cdots(s_2-1/2) (1/2-s_0)]^{1-\delta}}}\\
&\le \cdots \cdots \\
&\le\frac{2^{(n+\cdots+2)/\delta}}{(n-1)!\delta}\int_{0}^{1/2}\int_{1/2}^1 \frac{ds_2ds_0}{(s_{2}-s_0)^{\delta}[(s_2-1/2) (1/2-s_0)]^{1-\delta}}\\
&\le\frac{2^{(n+\cdots+2+1)/\delta}}{(n-1)!}\int_{0}^{1/2} \log \frac{2}{1/2-s_0}\cdot \frac{ds_0}{(1/2-s_0)^{(1-\delta)}}\\
&< \infty.
\end{aligned}
\]
This yields $\mathbb{E}(\mu(I_0)\mu(I_1)^\kappa)<\infty$. \qed

\section{Proof of Theorem \ref{nm}}

The proof follows the same lines as that given in \cite{BaMa02} for compound Poisson cascades, and uses computations similar to those performed in \cite{RhVa11} to find the sufficient condition of the finiteness. 

\medskip

Let $J=[t_0,t_1]\in \mathcal{I}$. For $t\in J$ and $\epsilon <|J|$ we have
\[
V_\epsilon^J(t)=\widetilde{V}^J_\epsilon(t) \cup V^{J,l}(t) \cup V^{J,r}(t),
\]
where $\widetilde{V}^J_\epsilon(t)=V_\epsilon^J(t) \setminus V^J_{|J|}(t)$ and recall in Section \ref{pmmon} that
\begin{eqnarray*}
V^{J,l}(t) &=&\left\{z=x+iy\in V(t): |J| \le y < 2(t_1-x)\right\},\\
V^{J,r}(t) &=&\left\{z=x+iy\in V(t): |J| \le y \le 2(x-t_0)\right\}.
\end{eqnarray*}
Let $s\in \{l,r\}$. Recall in Lemma \ref{Ccq} that for $q\in I_\nu$ there exists a constant $C_q<\infty$ such that
\begin{equation}\label{bcq}
\mathbb{E}\left(\sup_{t\in J} e^{q\Lambda(V^{J,s}(t))}\right)\le C_q,
\end{equation}
and for $q\in \mathbb{R}$ there exists a constant $c_q>0$ such that
\begin{equation}\label{cq}
\mathbb{E}\left(\inf_{t\in J}e^{q\Lambda(V^{J,s}(t))}\right)\ge c_q.
\end{equation}
Let $\widetilde{\mu}_\epsilon^J(t)=Q(\widetilde{V}^J_\epsilon(t))\, dt$, $\widetilde{\mu}^J=\lim_{\epsilon \to0} \widetilde{\mu}^J_\epsilon$ and $\widetilde{Z}(J)=\widetilde{\mu}^J(J)/|J|$. Then it is easy to see that for $q\in I_\nu$,
\[
\mathbb{E}(\widetilde{Z}(J)^q)<\infty \Rightarrow \mathbb{E}(Z(J)^q)<\infty.
\]
and for $q\in\mathbb{R}$,
\[
\mathbb{E}(Z(J)^q)<\infty \Rightarrow \mathbb{E}(\widetilde{Z}(J)^q)<\infty.
\]

\subsection{} First we show that for $q\in I_\nu \cap (-\infty,0)$ we have $\mathbb{E}(Z^q)<\infty$. Let $J_0=I_{00}$ and $J_1=I_{11}$. It is clear that
\[
\widetilde{\mu}^I(I)\ge \widetilde{\mu}^I(J_0)+\widetilde{\mu}^I(J_1).
\]
For $i\in\{0,1\}$ define
\begin{eqnarray*}
V_i&=&V^I(J_i)\cap \{z\in \mathbb{H}: \mathrm{Im}(z) \le |I|\},\\
V_{i,l}(t)&=&V^{J_i,l}(t)\cap \{z\in \mathbb{H}: \mathrm{Im}(z) \le |I|\},\\
V_{i,r}(t)&=&V^{J_i,r}(t)\cap \{z\in \mathbb{H}: \mathrm{Im}(z) \le |I|\},
\end{eqnarray*}
and
\[
m_{i,l}=\inf_{t\in J_i} e^{\Lambda(V_{i,l}(t))}; \ m_{i,r}=\inf_{t\in J_i} e^{\Lambda(V_{i,r}(t))}.
\]

For $i=0,1$ let $U_i=4^{-1} \cdot m_{i,l} \cdot m_{i,r} \cdot e^{\Lambda(V_i)}$. Then we have
\[
\widetilde{Z}(I) \ge U_0 \widetilde{Z}(J_0)+U_1\widetilde{Z}(J_1),
\]
where $\widetilde{Z}(I)$, $\widetilde{Z}(J_0)$, $\widetilde{Z}(J_1)$ have the same law; $U_0$, $U_1$ have the same law; $\widetilde{Z}(J_0)$, $\widetilde{Z}(J_1)$ and $(U_0,U_1)$ are independent. So by using the approach of Molchan for Mandelbrot cascades in the general case \cite[Theorem 4]{Mol96}, we only need to show that $\mathbb{E}(U_0^q)<\infty$ to imply that $\mathbb{E}(\widetilde{Z}(I)^q)<\infty$, thus $\mathbb{E}(Z^q)<\infty$.

Since $q<0$, we have
\[
U_0^q=4^{-q}\cdot \sup_{t\in J_0} e^{q\Lambda(V_{0,l}(t))} \cdot \sup_{t\in J_0} e^{q\Lambda V_{0,r}(t))} \cdot  e^{q\Lambda(V_0)}.
\]
Notice that these random variables are independent, so
\[
\mathbb{E}(U_0^q)=4^{-q}\cdot \mathbb{E}\left(\sup_{t\in J_0}e^{q\Lambda(V_{0,l}(t))}\right) \cdot \mathbb{E}\left(\sup_{t\in J_0} e^{q\Lambda V_{0,r}(t))} \right)\cdot  \mathbb{E}\left(e^{q\Lambda(V_0)}\right).
\]
Then from the fact that $q\in I_\nu$ and \eqref{bcq} we get the conclusion. \qed

\subsection{} Now we show that for $q\in (-\infty,0)$, if $\mathbb{E}(Z^q)<\infty$ then $q\in I_\nu$. Let $J_0=\inf I +|I|[0,2/3]$, $J_1=\inf I+ |I|[1/3,1]$ and $J=\inf I +|I|[1/3,2/3]$. Then we have
\[
\widetilde{\mu}^I(I)\le \widetilde{\mu}^I(J_0)+\widetilde{\mu}^I(J_1).
\]
For $i\in\{0,1\}$ define
\begin{eqnarray*}
V_i&=&(V^I(J_i)\setminus V^I(J))\cap \{z\in \mathbb{H}: \mathrm{Im}(z) < |I|\},\\
V_{i,l}(t)&=&V^{J_i,l}(t)\cap \{z\in \mathbb{H}: \mathrm{Im}(z) < |I|\},\\
V_{i,r}(t)&=&V^{J_i,r}(t)\cap \{z\in \mathbb{H}: \mathrm{Im}(z) < |I|\}.
\end{eqnarray*}
Also define $V=V^I(J)\cap \{z\in \mathbb{H}: \mathrm{Im}(z) < |I|\}$. Then we get
\[
\widetilde{Z}(I) \le e^{\Lambda(V)}\cdot \left(\sum_{i=0,1} 4^{-1}\cdot \sup_{t\in J_i} e^{\Lambda(V_{i,l}(t))}\cdot \sup_{t\in J_i} e^{\Lambda(V_{i,l}(t))} \cdot e^{\Lambda(V_i)}\cdot \widetilde{Z}(J_i) \right).
\]
Since $q<0$, this gives
\[
\widetilde{Z}(I)^q \ge e^{q\Lambda(V)}\cdot \left(\sum_{i=0,1} 4^{-q}\cdot \inf_{t\in J_i} e^{q\Lambda(V_{i,l}(t))}\cdot \inf_{t\in J_i} e^{q\Lambda(V_{i,l}(t))} \cdot e^{q\Lambda(V_i)} \cdot \widetilde{Z}(J_i)^q \right).
\]
Taking expectation from both side and using \eqref{cq} we get
\[
\mathbb{E}(\widetilde{Z}(I)^q) \ge \mathbb{E}(e^{q\Lambda(V)})\cdot 2\cdot 4^{-q}\cdot c_q^2 \cdot \mathbb{E}(e^{q\Lambda(V_0)})\cdot \mathbb{E}(\widetilde{Z}(I)^q).
\]
Then from $\mathbb{E}(\widetilde{Z}(I)^q)<\infty$ we get $\mathbb{E}(e^{q\Lambda(V\cup V_0)})\le 2^{-1} 4^q c_q^{-2}<\infty$. This yields $q\in I_\nu$. \qed

\section{Proof of Theorem \ref{support}}

The proof is similar to that of \cite[Theorem 2.4]{Liu00}.

For $i\in \Sigma_*$ and $j\in\{0,1\}$ let $W_j^{[i]}=W_{ij}/ W_i$.

For $n\ge 1$, $\omega \in \Omega$ and $\mathbf{i}\in\Sigma$ define
\begin{eqnarray*}
A_n(\omega,\mathbf{i}) &=& \sum_{i=i_1\cdots i_n\in \Sigma_{n}} W_{i_n}^{[i_1\cdots i_{n-1}]}(\omega)\cdot \mathbf{1}_{\{\mathbf{i}|_n=i\}}\\
R_n(\omega,\mathbf{i}) &=& \sum_{i\in\Sigma_n} Z_i(\omega)\cdot \mathbf{1}_{\{\mathbf{i}|_n=i\}}.
\end{eqnarray*}
Thus for any $i=i_1\cdots i_n$ and $\mathbf{i}\in[i]$ we have 
$$
\mu(I_i)=\Big (\prod_{k=1}^nA_k(\omega,\mathbf{i})\Big ) \cdot R_n(\omega,\mathbf{i}).
$$ 
We claim that for any $n\ge 1$, $A_n$ has the same law as $A$, and  $R_n$ has the same law as $R$, where $A$ and $R$ are defined as in the beginning of Section~\ref{heart of the proof}; moreover,  $A_1,\cdots, A_n, R_n$ are independent. This is due to the fact that for any non-negative Borel functions $f_1,\cdots,f_n$ and $g$ one  gets
\begin{eqnarray*}
&& \mathbb{E}_\mathbb{Q}\left(g(R_n)\prod_{j=1}^kf_j(A_j)\right) \\
&=& \mathbb{E}\left(\sum_{i=i_1\cdots i_{n}\in\Sigma_n}  g(Z_{i}) Z_{i} \prod_{k=1}^{n}  f_k(W_{i_k}^{[i_1\cdots i_{k-1}]})W^{[i_1\cdots i_{k-1}]}_{i_{k}} \right)\\
&=&\mathbb{E}(g(Z)Z) \prod_{k=1}^n 2\mathbb{E}(f_k(W_0)W_0) \\
&=&\mathbb{E}_\mathbb{Q}(g(R)) \prod_{k=1}^n \mathbb{E}_\mathbb{Q}(f_k(A)).
\end{eqnarray*}

Under the assumptions we have
\[
\mathbb{E}_\mathbb{Q}(\log A)=2\mathbb{E}(W_0\log W_0)=\varphi'(1)\log 2:=\beta\in(-\infty,0)
\]
and
\[
\mathbb{E}_\mathbb{Q}((\log A)^2)-\mathbb{E}_\mathbb{Q}(\log A)^2=\varphi''(1)\log 2:=\gamma\in(0,\infty).
\]
Denote by $S_n=\log A_1+\cdots \log A_n$. By using law of iterated logarithm we get
\[
\limsup_{n\to\infty}  \frac{S_n-n\beta}{\sqrt{2\gamma n\log\log n}}=1, \ \mathbb{Q}\text{-a.s.}
\]
It follows that for $\mathbb{Q}$-almost all $(\omega,\mathbf{i})\in\Omega\times \Sigma$ and all $0<\epsilon <1$,
\begin{equation}\label{Sn}
e^{n\beta+(1-\epsilon)\sqrt{2\gamma n\log\log n}} \le e^{S_n} \le e^{n\beta+(1+\epsilon)\sqrt{2\gamma n\log\log n}},
\end{equation}
where the left inequality holds for infinitely many $n\in\mathbb{N}$, while the right inequality holds for all $n\in\mathbb{N}$ sufficiently large. We also have the following lemma.

\begin{lemma}\label{Zeps}
For $0<\epsilon<1$ one has for $\mathbb{Q}$-almost all $(\omega,\mathbf{i})\in\Omega\times \Sigma$ and all $n\in\mathbb{N}$ sufficiently large,
\[
e^{-\sqrt{n}\epsilon}\le R_n \le e^{\sqrt{n}\epsilon}. 
\]
\end{lemma}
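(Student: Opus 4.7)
The plan is to apply the Borel--Cantelli lemma under $\mathbb{Q}$, relying on two facts already established above: for every fixed $n$ the variable $R_n$ has the same $\mathbb{Q}$-law as $R$, and $\mathbb{E}_{\mathbb{Q}}(f(R))=\mathbb{E}(f(Z)Z)$ for any measurable $f\ge 0$. In particular, whenever the right hand sides are finite,
\[
\mathbb{E}_{\mathbb{Q}}(R_n^{q})=\mathbb{E}(Z^{q+1}), \qquad \mathbb{E}_{\mathbb{Q}}(R_n^{-q})=\mathbb{E}(Z^{1-q}).
\]
Both bounds will then come from Markov's inequality applied to an appropriate moment and a summability argument in $n$.

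For the upper bound $R_n\le e^{\sqrt n \epsilon}$, I first observe that the hypothesis $\varphi''(1)>0$ forces $1$ to lie in the interior of $I_\nu$: the second derivative of $\psi(-iq)$ at $q=1$ exists only if $\int |x|^2 e^{qx}\,\nu(\mathrm d x)<\infty$ in a neighborhood of $1$, which in turn ensures $\int_{|x|\ge 1}e^{qx}\,\nu(\mathrm d x)<\infty$ on that neighborhood. Combined with $\varphi'(1)<0$ and the continuity of $\varphi$ on the interior of $I_\nu$, this provides some $q_1>0$ with $1+q_1\in I_\nu$ and $\varphi(1+q_1)<0$. Theorem~\ref{pm} then yields $\mathbb{E}(Z^{1+q_1})<\infty$, and Markov's inequality gives
\[
\mathbb{Q}(R_n>e^{\sqrt n \epsilon})\le e^{-q_1\sqrt n \epsilon}\,\mathbb{E}(Z^{1+q_1}),
\]
which is summable in $n$, so Borel--Cantelli concludes. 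This is the only place a nontrivial hypothesis of Theorem~\ref{support} beyond non-degeneracy is used, and is the main (mild) obstacle of the argument.

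For the lower bound $R_n\ge e^{-\sqrt n \epsilon}$, I avoid negative moments of $Z$ altogether: for any $q_2\in(0,1)$, Jensen's inequality applied to the concave function $x\mapsto x^{1-q_2}$, combined with $\mathbb{E}(Z)=1$ from Theorem~\ref{nd}, gives $\mathbb{E}(Z^{1-q_2})\le 1$. Markov then yields
\[
\mathbb{Q}(R_n<e^{-\sqrt n \epsilon})=\mathbb{Q}(R_n^{-q_2}>e^{q_2\sqrt n \epsilon})\le e^{-q_2\sqrt n \epsilon}\,\mathbb{E}(Z^{1-q_2})\le e^{-q_2\sqrt n \epsilon},
\]
which is again summable, and a second application of Borel--Cantelli closes the proof.
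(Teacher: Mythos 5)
Your proof is correct, and it follows the same overall strategy as the paper: transfer the tail probabilities of $R_n$ under $\mathbb{Q}$ to size-biased moments of $Z$ via $\mathbb{E}_{\mathbb{Q}}(f(R_n))=\mathbb{E}(f(Z)Z)$, then conclude by Borel--Cantelli. The differences lie in the tail estimates. For the upper tail, the paper does not use Markov with a power moment: it sums $\mathbb{Q}(R_n\ge e^{\sqrt n\epsilon})=\mathbb{E}\big(Z\mathbf{1}_{\{Z\ge e^{\sqrt n\epsilon}\}}\big)$ over $n$ using the elementary bound $\sum_{n\ge1}\mathbf{1}_{\{X\ge\sqrt n\}}\le X^2$, so it only needs $\mathbb{E}\big(Z(\log^+ Z)^2\big)<\infty$, which it then deduces — just as you do, and with the same implicit use of $\varphi$ being finite slightly to the right of $1$ — from Theorem \ref{pm} applied to some $q>1$ with $\varphi(q)<0$; your Chernoff-type bound $e^{-q_1\sqrt n\epsilon}\,\mathbb{E}(Z^{1+q_1})$ uses that moment of order $>1$ directly and is equally summable, so both routes end up invoking the same ingredient. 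For the lower tail, the paper is even more direct than your Jensen step: on $\{Z\le e^{-\sqrt n\epsilon}\}$ the size-biasing factor itself gives $\mathbb{E}\big(Z\mathbf{1}_{\{Z\le e^{-\sqrt n\epsilon}\}}\big)\le e^{-\sqrt n\epsilon}$, with no fractional or negative moments needed. Two minor points on your write-up: to conclude $\varphi(1+q_1)<0$ you should invoke $\varphi(1)=0$ (a consequence of the normalization \eqref{a}) together with $\varphi'(1)<0$ — continuity alone does not yield it; and your claim that $\varphi''(1)>0$ places $1$ in the interior of $I_\nu$ is better justified by noting that for $\varphi'(1)$ and $\varphi''(1)$ to make two-sided sense, $\psi(-iq)$ must be finite on a neighborhood of $1$, which is precisely the reading the paper uses implicitly when it asserts the existence of $q>1$ with $\varphi(q)<0$. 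Neither point is a genuine gap.
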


Then the rest of the proof is exactly the same as \cite[Theorem 2.4]{Liu00}. \qed

\subsection{Proof of Lemma \ref{Zeps}}

The proof is borrowed from Lemma 12 in \cite{LiRo94}. First we have
\begin{eqnarray*}
\mathbb{Q}\left(|\log R_n| \ge \sqrt{n}\epsilon \right)&=&\mathbb{Q}\left( R_n \ge e^{\sqrt{n}\epsilon} \right)+\mathbb{Q}\left(R_n \le e^{-\sqrt{n}\epsilon} \right)\\
&=&\mathbb{E}\left( Z\cdot \mathbf{1}_{\{Z \ge e^{\sqrt{n}\epsilon}\}} \right)+\mathbb{E}\left(Z\cdot \mathbf{1}_{\{Z \le e^{-\sqrt{n}\epsilon}\}} \right)\\
&\le&\mathbb{E}\left( Z\cdot \mathbf{1}_{\{Z \ge e^{\sqrt{n}\epsilon}\}} \right)+e^{-\sqrt{n}\epsilon}.
\end{eqnarray*}
Applying the elementary inequality $\sum_{n\ge 1} \mathbf{1}_{\{ X\ge \sqrt{n}\}} \le X^2$ we get
\begin{eqnarray*}
\sum_{n\ge 1}\mathbb{Q}\left(|\log R_n| \ge \sqrt{n}\epsilon \right) &\le&\sum_{n\ge 1}\mathbb{E}\left( Z\cdot \mathbf{1}_{\{Z \ge e^{\sqrt{n}\epsilon}\}} \right)+\sum_{n\ge 1}e^{-\sqrt{n}\epsilon}\\
&=&\mathbb{E}\left( Z\cdot \sum_{n\ge 1} \mathbf{1}_{\left\{\frac{\log Z}{\epsilon} \ge \sqrt{n}\right\}} \right)+\sum_{n\ge 1}e^{-\sqrt{n}\epsilon}\\
&\le &\epsilon^{-2}\mathbb{E}(Z(\log Z)^2)+\sum_{n\ge 1}e^{-n\epsilon}.
\end{eqnarray*}
Since $\varphi'(1)<0$, there exists $q>1$ such that $\varphi(q)<0$, thus due to Theorem \ref{pm} we have $\mathbb{E}(Z^q)<\infty$. This implies $\mathbb{E}(Z (\log Z)^2)<\infty$, and the conclusion comes from Borel-Cantelli lemma.

\bibliographystyle{habbrv}
\bibliography{refs}

\end{document}